\newtheorem{prop}{Proposition}[section]
\newtheorem{coro}[prop]{Corollary}
\newtheorem{thm}[prop]{Theorem}
\newtheorem{lemma}[prop]{Lemma}
\newtheorem{conjecture}[prop]{Conjecture}
\newtheorem{definition}[prop]{Definition}
\newtheorem{notation}[prop]{Notation}
\newtheorem{construction}[prop]{Construction}
\newcommand{\PSL}{\mathrm{PSL}} 
\newcommand{\PGL}{\mathrm{PGL}}
\newcommand{\AGL}{\mathrm{AGL}}
\newcommand{\Dy}{\mathrm{D}} 
\newcommand{\Cyc}{\mathrm{C}} 
\newcommand{\Alt}{\mathrm{A}} 
\newcommand{\Sym}{\mathrm{S}}
\newcommand{\lcs}{\mathrm{lcs}}
\newenvironment{purple}{\relax\color{purple}}{\hspace*{.5ex}\relax}
\newcommand{\bcp}{\begin{purple}}
\newcommand{\ecp}{\end{purple}}
\begin{document}
\title{Regular polytopes of rank $n/2$ for transitive groups of degree $n$}

\author[M. E. Fernandes]{Maria Elisa Fernandes}
\address{Maria Elisa Fernandes,
Center for Research and Development in Mathematics and Applications, Department of Mathematics, University of Aveiro, Portugal
}
\email{maria.elisa@ua.pt}

\author[Claudio Piedade]{Claudio Alexandre Piedade}
\address{Claudio Alexandre Piedade, Centro de Matemática da Universidade do Porto, Universidade do Porto, Portugal
}
\email{claudio.piedade@fc.up.pt}

\date{}
\maketitle

\begin{abstract}
Previous research established that the maximal rank of the abstract regular polytopes whose automorphism group is a transitive proper subgroup of $\Sym_n$ is $n/2 + 1$. Up to isomorphism and duality, when $n\geq 12$, there are only two polytopes attaining this rank and they occur when $n/2$ is odd, and hence have even rank.

 In this paper, we investigate the case where the rank is equal to $n/2$ ($n\geq 14$). Our analysis suggests that reducing the rank by one results in a substantial increase in the number of regular polytopes. 
 
\end{abstract}

\noindent \textbf{Keywords:} Abstract Regular Polytopes; String C-Groups; Symmetric Groups; Alternating Groups; Permutation Groups.

\noindent \textbf{2000 Math Subj. Class:} 52B11, 20B35, 20B30, 05C25.

\section{Introduction}

Abstract polytopes are combinatorial objects that describe standard regular polytopes using their a face-lattice \cite{arp}. The rank of an abstract polytope is the length of a maximal chain minus two (there exists a maximal face and a minimal face which are not counted for the rank).
An abstract polytope is regular when its group of automorphisms acts regularly on the maximal chains (usually called flags).
A notable feature of these structures lies in their one to one correspondence with their automorphism groups,  which are string C-groups.
These algebraic structures are defined not only by the group itself but also by a specified set of involutory generators, the size of which determines the rank.

The maximal rank of an abstract regular polytope whose automorphism group has degree $n$ is $n-1$.
 For $n\geq 5$, the simplex is the only polytope achieving the maximal rank \cite{fl,corr}. One permutation representation of the group of automorphism of the simplex on $n$ points is 
$\langle (1,2),\, (2,3),\, \ldots, (n-1,n)\rangle$, the polytope with  Schl\"{a}fli  symbol $\{3,\ldots, 3\}$ corresponding to the standard Coxeter group of type $\Alt_{n-1}$.
Indeed $n-1$ is the maximal size of an independent set in $\Sym_n$,  and $\Sym_{n}$ is the unique group of degree $n$ that admits an independent set of generators of size $n-1 $\cite{2000W}. The classification of abstract regular polytopes of ranks $n-k$ for groups of degree $n\geq 2k+3$, was also accomplished for $k\in\{1,2,3,4\}$ in \cite{extension}. 
The automorphism group of all these high-rank abstract polytopes is $\Sym_n$. The rank of a regular polytope with automorphism group being an alternating group is considerably lower than $n-1$. In fact, the highest rank for the alternating group $\Alt_n$ is $\lfloor (n-1)/2\rfloor$ for $n\geq 12$  \cite{2017CFLM}.

The analyses of the \emph{C-rank} (maximal size of a set of generators of a string C-group) of other transitive permutation groups of degree $n$, different from $\Alt_n$ and $\Sym_n$, started in \cite{2016CFLM}. The C-rank of a primitive proper subgroup of $\Sym_n$ is at most $n/2+1$. Indeed, only few of these groups have C-rank greater than or equal $n/2$ (see Table 2  of \cite{2016CFLM}).

In this paper we give the classification of the transitive proper subgroups of $\Sym_n$ with C-rank greater than or equal to $n/2$ ($n\geq 14$),  refining the classification given in \cite{2016CFLM}. 
The group generated by the set $S=\{\rho_0,\ldots, \rho_{r-1}\}$ of permutations of degree $n$ will be represented by a graph, which is a slight modification of the Schreier coset digraph, 
where each pair of opposite arcs with label $\rho_i$ are replaced by a single edge with label $i$. For the maximal possible rank $r=n/2+1$, we noticed that in Theorem 1.2 (b) of \cite{2016CFLM} there is a possibility for the set of generators that is missing, namely the one having the permutation representation graph (2) of Table~\ref{RC2}.

The main result of the paper is the following.

\begin{thm}\label{main}
Let $n/2\geq 7$ and $G$ be a transitive proper subgroup of $\Sym_n$.  If $G$ is the automorphism group of an abstract regular polytope of rank $r\geq n/2$, then $G$ is one of the following groups.
$$\textrm{C}_2\times \Sym_{n/2},\,(\textrm{C}_2)^{n/2-1}: \Sym_{n/2},\, (\textrm{C}_2)^{n/2}: \Sym_{n/2}\mbox{ or  }(\Sym_{n/2})^2:\textrm{C}_2.$$ 
More precisely, all the possibilities for the set generators $S=\{\rho_0,\ldots, \rho_{r-1}\}$ of $G$ are, up to duality, included among the graphs of the tables displayed in Section~\ref{tables+proof}. \end{thm}

\begin{conjecture}
The graphs given in Tables~(\ref{RC2})--(\ref{m=2table}) are permutation representation graphs of string C-groups.
\end{conjecture}

The paper is organized as follows. 
The results in Section~\ref{prem} allow us to reduce our study to the case where \( G \) is a transitive imprimitive group. 
In Section~\ref{Imp}, we further reduce our analysis to two cases: either \( G \) is a transitive imprimitive group with a block system consisting of \( n/2 \) blocks of size two, or with two blocks of size \( n/2 \).
These two cases are treated separately in Sections~\ref{k=2} and~\ref{m=2}, respectively. Finally, in Section~\ref{tables+proof}, we present a proof of Theorem~\ref{main}.


\section{Preliminaries}\label{prem}

The automorphism group of an abstract regular polytope is a string C-group. One important fact about string C-groups is the one-to-one correspondence between abstract regular polytopes and string C-groups~\cite{arp}. Hence, a classification of string C-groups results in a classification of abstract regular polytopes. Before introducing the formal definition of a string C-group, we first present some preliminary results on independent sets and string groups, which are closely related concepts.

\subsection{Independent generating sets}\label{subIGS}

\begin{definition}
Let $G$ be a group. A set $S=\{\rho_0,\ldots, \rho_{r-1}\}$ is an {\bf independent generating set} of $G$  if $\rho_i\notin\langle \rho_j\,|\, j\neq i\rangle$ and $G=\langle S\rangle$.  
\end{definition}

One important result of Whiston~\cite{2000W} regards the maximal size of an independent generating set of the symmetric group of degree $n$.

\begin{thm}\cite[Theorem 1]{2000W}\label{w1}
The maximal size of an independent generating set for a group of degree $n$ is $n-1$. Moreover $\Sym_n$ is the only group having an independent generating set of size $n-1$.
\end{thm}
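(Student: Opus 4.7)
\medskip

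\noindent\textbf{Proof proposal.} The plan is to reduce the problem through a chain of structural constraints and then grind through a finite case analysis governed by the block structure of $G$. The starting point is the bound from \cite{2016CFLM} that a transitive proper subgroup of $\Sym_n$ has C-rank at most $n/2+1$, so the hypothesis $r\geq n/2$ already forces $r\in\{n/2,\, n/2+1\}$. The case $r=n/2+1$ is exactly the one settled in \cite{2016CFLM}, where only two (odd-rank) polytopes arise, and these appear in the tables of Section~\ref{tables+proof}; so it suffices to classify the string C-groups of rank exactly $n/2$.

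The next step is to eliminate primitive $G$. Since $G\neq \Sym_n$, we consider two subcases. If $G=\Alt_n$, then by \cite{2017CFLM} the C-rank is at most $\lfloor (n-1)/2\rfloor$, which is strictly less than $n/2$ for the even $n$ we are forced into (recall that the bound of \cite{2016CFLM} requires $n$ even), ruling out $\Alt_n$. If $G$ is any other primitive proper subgroup, the rank bounds of \cite{2016CFLM} are much smaller than $n/2$ for $n/2\geq 7$, so again there is nothing to classify. Hence $G$ is imprimitive, and we may fix a non-trivial block system $\mathcal{B}$.

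With $G$ imprimitive and $r=n/2$, the crux of the argument is to pin down the possible block systems. Writing $k$ for the block size and $m=n/k$ for the number of blocks, the induced action $\bar G\leq \Sym_m$ is generated by at most $r$ involutions, and the kernel $K$ of $G\to\bar G$ is an elementary abelian $2$-group (being generated by involutions lying inside $\Sym_k\times\cdots\times\Sym_k$ and commuting appropriately in the string). Applying Theorem~\ref{w1} to $\bar G$ together with the bound from \cite{2016CFLM} in $\Sym_m$, one shows that $\bar G$ must itself have rank very close to $m-1$, which forces either $k=2$ (so $\bar G$ acts on $n/2$ blocks and is, up to small corrections, $\Sym_{n/2}$ or $\Alt_{n/2}$) or $m=2$ (so $G$ has two blocks of size $n/2$). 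This is precisely the dichotomy announced in the introduction and it channels the remainder of the proof into the two sections mentioned.

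The bulk of the work, and the main obstacle, is then the case analysis in each branch. For blocks of size $2$, we use the classification (promised in the paper, in Section~\ref{Gk=2}) of the elementary abelian subgroups of $\Cyc_2^{n/2}$ that can arise as the kernel, compatible with $\bar G\in\{\Sym_{n/2},\Alt_{n/2}\}$ and with the string relations. For each candidate kernel and each compatible distinguished involutory generating set of $\bar G$, one lifts to $G$ and checks the string property together with the intersection property; string C-groups that survive are recorded, those that fail the intersection property are discarded. For the case of two blocks of size $n/2$, $G$ embeds in $\Sym_{n/2}\wr\Cyc_2$ and the induced transitive constituent on each block has rank essentially $n/2-1$ or $n/2$, so by Theorem~\ref{w1} it must be $\Sym_{n/2}$ (or very close); here the main technical point is to track which generator swaps the two blocks and to verify the intersection property across the swap. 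Collecting all surviving configurations yields the $33$ families recorded in the tables of Section~\ref{tables+proof}. The hardest step, as usual in string C-group work, is not the existence of the examples (which can be exhibited by drawing their permutation representation graphs) but the completeness of the list, i.e. ruling out, case by case, all would-be generating sets that fail the intersection property.
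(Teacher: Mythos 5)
Your proposal does not address the statement it is supposed to prove. The target is Theorem~\ref{w1}, Whiston's theorem: \emph{every} independent generating set of a group of degree $n$ has size at most $n-1$, with $\Sym_n$ the unique group attaining $n-1$. What you have written instead is a high-level sketch of the paper's main classification result (Theorem~\ref{main}): you invoke the $n/2+1$ bound of \cite{2016CFLM}, eliminate $\Alt_n$ and the other primitive groups, derive the dichotomy $k=2$ versus $m=2$, and gesture at the case analysis producing the tables of Section~\ref{tables+proof}. None of this bears on Theorem~\ref{w1}. Worse, your argument is circular as a proof of the target: in the middle of the block-system reduction you explicitly write ``Applying Theorem~\ref{w1} to $\bar G$,'' so the statement to be proved is used as an ingredient of its own proof. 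In the paper, Theorem~\ref{w1} is an external input, cited from \cite{2000W} and never proved there; it is one of the foundational facts (together with Theorem~\ref{CC}) on which the classification machinery rests, not a consequence of it.

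A genuine proof of Theorem~\ref{w1} needs entirely different tools from anything in your sketch: Whiston's argument bounds the size of an independent set by relating it to the length of a chain of subgroups of $\Sym_n$ (an independent set $\{\rho_0,\ldots,\rho_{r-1}\}$ yields a strictly increasing chain $\langle\rho_0\rangle < \langle\rho_0,\rho_1\rangle < \cdots < G$), combines this with the Cameron--Solomon--Turull computation of the maximal subgroup-chain length in $\Sym_n$, and runs an induction through point stabilizers and the O'Nan--Scott/CFSG-based structure of primitive groups to handle the equality case and show that only $\Sym_n$ admits an independent generating set of size $n-1$. Separately, even read charitably as a sketch of Theorem~\ref{main}, your text contains inaccuracies (for instance, the rank bound $r\leq n/2+1$ of \cite{2016CFLM} does not ``require $n$ even''; rather, attaining $r=n/2+1$ forces $n\equiv 2 \bmod 4$, while $\Alt_n$ is excluded for all $n\geq 12$ simply because $\lfloor (n-1)/2\rfloor < n/2$), but the decisive defect is that the statement assigned to you is never engaged.
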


In what follows let $T$ be a tree with $n$ vertices and let $S(T)$ be the set of transpositions corresponding to the edges of $T$.

\begin{thm}\cite[Theorem 2.1]{2002CC} \label{CC} Let $S$ be an independent generating set for $\Sym_n$ of size $n-1$, where
$n\geq 7$. Then there is a tree $T$ on $\{1,\ldots,n\}$ such that one of the following holds:
\begin{enumerate}
\item $S = S(T )$;
\item for some element $s\in S(T )$, we have
$$S = \{s\} \cup \{(st)^{\epsilon(t)}\,:\, t\in S(T) \setminus \{s\}\} \mbox{ where } \epsilon(t)=\pm 1.$$
\end{enumerate}
Conversely, each of these sets is an independent generating set for $\Sym_n$.
\end{thm}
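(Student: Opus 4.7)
The plan is to exploit the independence of $S$ via the family of subgroups $H_i := \langle S \setminus \{\rho_i\}\rangle$. Since $\rho_i \notin H_i$ but $\langle H_i, \rho_i\rangle = \Sym_n$, each $H_i$ is a maximal subgroup of $\Sym_n$. By the O'Nan--Scott theorem, $H_i$ is intransitive (a subset stabilizer $\Sym_k \times \Sym_{n-k}$), imprimitive (a wreath product $\Sym_a \wr \Sym_b$ with $n = ab$), or primitive and different from $\Alt_n$. Now $H_i$ itself must contain an independent set of size $n-2$ (namely $S\setminus\{\rho_i\}$), so by Theorem~\ref{w1} applied to $H_i$, the ``independence rank'' of $H_i$ must be at least $n-2$. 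For $n\geq 7$, the primitive maximal subgroups of $\Sym_n$ other than $\Alt_n$ have much smaller independence rank (by standard bounds on their orders/degrees), and all imprimitive maximal subgroups likewise fall short of $n-2$. This should rule out everything except the intransitive case, forcing $H_i = \Sym_{A_i} \times \Sym_{B_i}$ for a partition $\{A_i, B_i\}$ of $\{1, \ldots, n\}$. The special case $H_i \leq \Aut(\Alt_n) = \Sym_n$ with $H_i \cap \Alt_n = \Alt_n$ (i.e.\ $H_i = \Alt_n$) must also be excluded; this can be done because then $\rho_i$ would be a lone odd permutation with $\langle\Alt_n,\rho_i\rangle=\Sym_n$, but the uniqueness of this configuration among $S$ is inconsistent with $|S|=n-1$.

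With each $H_i$ identified as $\Sym_{A_i} \times \Sym_{B_i}$, each $\rho_i$ must move at least one element of $A_i$ into $B_i$ and fix setwise every other partition $\{A_j, B_j\}$ (since $\rho_i \in H_j$ for $j\neq i$). I would then analyse the combinatorics of the $n-1$ partitions $\{A_i, B_i\}$: their mutual refinements must separate all $n$ points (otherwise two points could be swapped by an element commuting with everything in $S$, contradicting $\langle S\rangle = \Sym_n$), and independence forces them to be arranged hierarchically. Assembling these nested/crossing partitions yields a tree $T$ on $\{1,\ldots, n\}$ whose $n-1$ edges correspond bijectively to the partitions, with removal of an edge of $T$ producing exactly $\{A_i, B_i\}$.

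It remains to pin down the cycle type of each $\rho_i$. The compatibility conditions ``$\rho_i$ preserves $\{A_j, B_j\}$ for all $j\neq i$'' are extremely rigid: writing $\rho_i$ in cycle notation and tracking, vertex-by-vertex of $T$, which side of each edge each support-point lies on, one finds that $\rho_i$ must be supported on a path in $T$ of length~$1$ or~$2$ (longer paths would violate setwise preservation of some intermediate $\{A_j, B_j\}$). A length-one support gives a transposition corresponding to an edge of $T$; if all $\rho_i$ are of this form we are in case~(1). Otherwise, a support of length two produces a $3$-cycle (two adjacent edges) or a double transposition (two non-adjacent edges), and both are products $(st)^{\pm 1}$ of edge-transpositions of $T$. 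The key remaining task --- the main obstacle --- is to show that if any $\rho_i$ is a non-transposition, then there is a \emph{single} ``anchor'' transposition $s$ such that every non-transposition $\rho_i$ has the form $(st)^{\pm 1}$ for a varying $t\in S(T)\setminus\{s\}$. I would prove this by examining the products $\rho_i\rho_j$, noting that $\rho_i\rho_j\in H_k$ for every $k\notin\{i,j\}$, and using these inclusions to iteratively collapse the possibilities to the unique common factor $s$.

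Finally, the converse direction (that each listed $S$ really is an independent generating set of $\Sym_n$) is routine: for~(1) this is the classical Cayley presentation via edge transpositions of a tree, and case~(2) reduces to case~(1) by observing that $t = s\cdot(st)$ for each $t\neq s$, so $S$ generates the same group as $S(T)$, while independence transfers from $S(T)$ to $S$ since the substitution is a bijective change of basis on the involutions.
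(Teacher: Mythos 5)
The paper does not prove this statement at all --- it is imported verbatim from \cite{2002CC} --- so your attempt can only be measured against Cameron and Cara's actual argument, whose skeleton (the subgroups $H_i=\langle S\setminus\{\rho_i\}\rangle$, bounding independent sets to constrain $H_i$, extracting partitions, assembling a tree) you have correctly guessed. But there is a fatal step: your exclusion of $H_i=\Alt_n$. That case cannot be excluded, because it is precisely what happens in case (2) of the theorem: if $S=\{s\}\cup\{(st)^{\epsilon(t)}\,:\,t\in S(T)\setminus\{s\}\}$, then the $n-2$ elements $(st)^{\pm 1}$ are even permutations which generate $\Alt_n$, so deleting $s$ yields $H_i=\Alt_n$ exactly. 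Any argument that rules this out ``proves'' the false statement that case (1) always holds, contradicting the converse half of the very theorem you are proving; your stated reason (``the uniqueness of this configuration among $S$ is inconsistent with $|S|=n-1$'') is not an argument. Consistent with this, the rest of your proof is internally inconsistent: once all $n-1$ subgroups $H_i$ are intransitive and their partitions $\{A_i,B_i\}$ realize the $n-1$ edge-cuts of a single tree $T$, each $\rho_i$ lies in $\bigcap_{j\neq i}H_j$, and (apart from possible swaps of equal-sized parts, which you never address) that intersection is generated by the single edge-transposition of the $i$-th edge --- so your framework can only ever output transpositions, i.e.\ case (1). The $3$-cycles and double transpositions you discuss arise only when exactly one $H_i$ equals $\Alt_n$ and the remaining $n-2$ partitions are the edge-cuts of $T$ at the edges $t\neq s$; the correct proof must keep that branch alive and analyse an independent generating set of $\Alt_n$ of size $n-2$ together with the extra odd element.

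Two further defects. First, ``each $H_i$ is a maximal subgroup'' does not follow from $\rho_i\notin H_i$ and $\langle H_i,\rho_i\rangle=\Sym_n$: a small proper subgroup plus one element can generate $\Sym_n$, so O'Nan--Scott is not available at that point. What is true, and what the genuine proof uses, is only that $H_i$ contains an independent set of size $n-2$; one then needs quantitative bounds on the maximal size of independent sets in transitive proper subgroups of $\Sym_n$ (Whiston's lemmas, in the spirit of Lemma~\ref{maroti2} of this paper --- note that Theorem~\ref{w1} as quoted here does not suffice) to conclude that for $n\geq 7$ the subgroup $H_i$ is $\Alt_n$ or lies in some intransitive $\Sym_k\times\Sym_{n-k}$. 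Second, the converse is not a formal ``bijective change of basis'': independence is not preserved by arbitrary substitutions of generators, and for the sets in case (2) one must actually observe that $\langle (st)^{\epsilon(t)}\,:\,t\rangle\leq\Alt_n$ (so $s$ is independent of the rest) and that deleting one element $(st_0)^{\epsilon(t_0)}$ leaves a group preserving the edge-cut of $T$ at $t_0$, which $(st_0)^{\epsilon(t_0)}$ does not preserve. These verifications are short, but they are the content of the converse, not a formality.
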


\subsection{Sggi's and  permutation representation graphs}\label{subSGGI}

\begin{definition}\label{sggi}
A {\bf string group generated by involutions} or, for short, a {\bf sggi} is a pair $\Gamma=(G, S)$ where $G=\langle S\rangle$ with  $S=\{\rho_0,\ldots, \rho_{r-1}\}$ being an ordered set of involutions 
that satisfy the following property, called the commuting property.
$$\forall i,j\in\{0,\ldots, r-1\}, \;|i-j|>1\Rightarrow (\rho_i\rho_j)^2=1.$$
The {\bf dual of a sggi} is obtained by reversing the sequence of generators (take the ordering induced by the indices of the elements in the generating set).
\end{definition}

When $G$ is a group of degree $n$, a sggi $(G,S)$ can be represented by its permutation representation graph, defined as follows.

\begin{definition}
Suppose that $G$ is a permutation group of degree $n$ and let $\Gamma=(G, \{\rho_0,\ldots,\rho_{r-1}\})$ be a sggi. The {\bf permutation representation graph} $\mathcal{G}$ of $\Gamma$ is an $r$-edge-labelled multigraph with $n$ vertices and with an $i$-edge
 $\{a,\,b\}$ whenever $a\rho_i=b$ with $a\neq b$ and $i\in\{0,\ldots,r-1\}$.
\end{definition}

To avoid some cumbersome notation, we introduce the following notation that will be heavely used in the following sections.

\begin{notation}Let us consider the following notation.
\[\begin{array}{ll}
I_{i_1,\ldots,i_k} :=\{0,\ldots, r-1\}\setminus \{i_1,\ldots,i_k\} & I^{\leq i}:=\{0,\ldots, i\}\qquad I^{\geq i}:=\{i,\ldots, r-1\}\\[5pt]
I^{< i}:=\{0,\ldots, i-1\}&I^{> i}:=\{i+1,\ldots, r-1\}\\[5pt]
I^{\leq i}_{i_1,\ldots,i_k} :=\{0,\ldots, i\}\setminus\{i_1,\ldots,i_k\} & I^{\geq i}_{i_1,\ldots,i_k}:=\{i,\ldots, r-1\}\setminus\{i_1,\ldots,i_k\} \\[5pt]
I^{<i}_{i_1,\ldots,i_k} :=\{0,\ldots, i-1\}\setminus\{i_1,\ldots,i_k\} & I^{>i}_{i_1,\ldots,i_k}:=\{i+1,\ldots, r-1\}\setminus\{i_1,\ldots,i_k\}\\ [5pt]
G_{i_1,\ldots,i_k}:=\langle \rho_i\,|\, i\in I_{i_1,\ldots,i_k}\rangle &  G_{\{i_1,\ldots,i_k\}}:=\langle \rho_i\,|\, i\in \{i_1,\ldots,i_k\}\rangle\\[5pt]
G_{<i}:=\langle \rho_j\,|\, j\in I^{<i}\rangle & G_{>i}:=\langle \rho_j\,|\, j\in I^{>i}\rangle\\[5pt]

\Gamma_{i_1,\ldots,i_k} :=(G_{i_1,\ldots,i_k}, \{\rho_j \,|\, j \in I_{i_1,\ldots,i_k}\})&\Gamma_{\{i_1,\ldots,i_k\}} :=(G_{\{i_1,\ldots,i_k\}}, \{\rho_j \,|\, j \in \{i_1,\ldots,i_k\}\});\\[5pt]
\Gamma_{<i} := (G_{<i},\{ \rho_0,\ldots, \rho_{i-1}\})\qquad (i\neq 0)& \Gamma_{>i} := (G_{>i},\{ \rho_{i+1},\ldots, \rho_{r-1}\})\qquad (i\neq r-1);\\[5pt]

\end{array}\]
Let $\mathcal{G}_{i_1,\ldots,i_k}$ (resp. $\mathcal{G}_{\{i_1,\ldots,i_k\}}$) denote the permutation representation graph of  $\Gamma_{i_1,\ldots,i_k}$ (resp. $\Gamma_{\{i_1,\ldots,i_k\}}$). 
\end{notation}

Notice that when $\rho_i$ is a $k$-transposition (a  product of $k$ disjoint transpositions), $\mathcal{G}_{\{i\}}$ is a matching with $k$ edges. 
A consequence of the commuting property (see Definition~\ref{sggi}) is that, if $i$ and $j$ are nonconsecutive the connected components of $\mathcal{G}_{\{i,j\}}$ with more than two vertices are \emph{$\{i,j\}$-squares} (squares with alternating labels $i,\, j,\, i,\, j$).  A \emph{$J$-edge} is a set of $|J|$ parallel edges  with label-set $J$.  Sometimes we represent these set of edges by a single edge with the label $J$.

The following lemma lists all sggi's of rank at least $n-3$ associated with primitive groups of degree $n$, other than $\Sym_n$ and $\Alt_n$.

\begin{lemma}\label{maroti2}
Let $G$ be a primitive subgroup of $\Sym_n$ not equal to $\Alt_n$ or $\Sym_n$. Let $S$ be an independent generating set of size $r$ for $G$ such that $(G,S)$ is an sggi. 
 If  $r\geq  n-3$ then $G$ is one of the groups in the following table.
\begin{table}[h!]
\begin{tabular}{|c|c|c|cc|}
\hline
$G$                        & $n$                  & $r$                  & \multicolumn{2}{c|}{Permutation representation graphs} \\ \hline
$\Dy_{10}$                  & 5                  & 2                  & \multicolumn{2}{c|}{\xymatrix@-1.3pc{*+[o][F]{} \ar@{-}[rr]^0 && *+[o][F]{} \ar@{-}[rr]^1 && *+[o][F]{} \ar@{-}[rr]^0 && *+[o][F]{} \ar@{-}[rr]^1 && *+[o][F]{}}}                                  \\ \hline
\multirow{2}{*}{$\PSL_2(5)$} & \multirow{2}{*}{6} & \multirow{2}{*}{3} & \multicolumn{2}{c|}{\xymatrix@-1.3pc{*+[o][F]{} \ar@{-}[rr]^0 && *+[o][F]{} \ar@{-}[rr]^1 && *+[o][F]{} \ar@{-}[rr]^2 && *+[o][F]{} \ar@{-}[rr]^1 && *+[o][F]{} \ar@{=}[rr]^{\{0,2\}} && *+[o][F]{} } }                                  \\
                         &                    &                    & \multicolumn{2}{c|}{\xymatrix@-1.3pc{*+[o][F]{} \ar@{-}[rr]^0 && *+[o][F]{} \ar@{-}[rr]^1 && *+[o][F]{} \ar@{=}[rr]^{\{0,2\}} && *+[o][F]{} \ar@{-}[rr]^1 && *+[o][F]{} \ar@{-}[rr]^2 && *+[o][F]{} }}                                  \\ \hline
\multirow{2}{*}{$\PGL_2(5)$} & \multirow{2}{*}{6} & \multirow{2}{*}{3} & \xymatrix@-1.3pc{ && && *+[o][F]{}\ar@{=}[rr]^{\{0,1\}} && *+[o][F]{} \\ *+[o][F]{} \ar@{-}[rr]^0 && *+[o][F]{} \ar@{-}[rr]^1 && *+[o][F]{} \ar@{-}[rr]^0 \ar@{-}[u]^2 && *+[o][F]{}\ar@{-}[u]_2}                          &   \xymatrix@-1.3pc{&& && *+[o][F]{}\ar@{=}[rr]^{\{0,1\}} && *+[o][F]{} \\ *+[o][F]{} \ar@{=}[rr]^{\{0,2\}} && *+[o][F]{} \ar@{-}[rr]^1 && *+[o][F]{} \ar@{-}[rr]^0 \ar@{-}[u]^2 && *+[o][F]{}\ar@{-}[u]_2 }                        \\
                         &                    &                    & \xymatrix@-1.3pc{&& && *+[o][F]{}\ar@{-}[rr]^{0}\ar@{-}[rrd]^{1} && *+[o][F]{} \\ *+[o][F]{} \ar@{-}[rr]^0 && *+[o][F]{} \ar@{-}[rr]^1 && *+[o][F]{} \ar@{-}[rr]^0 \ar@{-}[u]^2 && *+[o][F]{}\ar@{-}[u]_2}                           &     \xymatrix@-1.3pc{*+[o][F]{}  \ar@{-}[rr]^1 \ar@{=}[d]_{\{0,2\}} && *+[o][F]{}  \ar@{-}[rr]^0 && *+[o][F]{}  \ar@{-}[d]^1\\ *+[o][F]{}  \ar@{-}[rr]^1 && *+[o][F]{} \ar@{-}[rr]^2 && *+[o][F]{}}      \\ \hline
\end{tabular}\\[5pt]
\caption{Primitive sggi's of degree $n$ and rank $r\geq n-3$ other than $\Sym_n$ and $\Alt_n$.}\label{primsggi}
\end{table}
\end{lemma}
\begin{proof}
From \cite[Proposition 3.3]{2017CFLM}, for $n\geq 8$, $r\leq n-4$.
Hence, here we will deal with the cases where $n\leq 7$.
In what follows, $\lcs(G)$ denotes the size of a longest chain of subgroups of $G$ in its subgroup lattice.
The following table lists all transitive primitive groups $G$ of degree $n\leq 7$, that are neither $\Sym_n$ nor $\Alt_n$, having a longest chain of subgroups with size $\lcs(G)\geq n-3$.
\begin{table}
\begin{tabular}{ccccc}
\hline
$n$ & $G$&$\lcs(G)$ &
 Generated by involutions \\ \hline
5 & $\Dy_{10}$   & 2   & yes   \\
  &  $\AGL_1(5)$ & 3   & no   \\
6 & $\PSL_2(5)$  & 4   & yes   \\
  & $\PGL_2(5)$  & 5   & yes             \\
7 & $\PSL_3(2)$  & 5   & yes              \\ \hline
\\
\end{tabular}
\end{table}
Computationally, we can exclude $\AGL_1(5)$ since it is not generated by involutions, and 
 it can be checked that $\PSL_3(2)$ is neither a sggi of rank $4$ nor $5$.
The remaining ones are the ones in the statement of  this lemma - the given permutation representation graphs were obtained computationally.

\end{proof}

We now provide a rank bound for some particular sggi's related to imprimitive groups.
\begin{lemma}\label{X}
Let $\Gamma=(G, S)$ is a sggi where $G$ is a transitive subgroup of $\Sym_n$.
Suppose $G$ satisfies the following conditions:
\begin{enumerate}
\item $S=\{\rho_0,\ldots, \rho_{r-1}\}$ is independent;
\item $G_{r-1}$ is intransitive; and
\item $G_j$ is transitive for some $j\notin \{0,r-1\}$.
\end{enumerate}
If $j$ is the maximal label satisfying (c), $k$ is the size of each $G_{<j}$-orbit and $m=n/k$, then $G_j\leq \Sym_k\wr\Sym_m$ and
$$r-1\leq (k-1)+(m-1).$$
\end{lemma}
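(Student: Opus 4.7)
Plan. I split the claim into the containment $G_j\le\Sym_k\wr\Sym_m$ and the rank inequality $r-1\le(k-1)+(m-1)$, and treat them in turn.

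The containment is produced by the sggi commuting property: whenever $i<j<\ell$ one has $|\ell-i|\ge 2$, so every generator of $G_{<j}$ commutes with every generator of $L:=\langle\rho_{j+1},\ldots,\rho_{r-1}\rangle$. Hence $L$ centralises $G_{<j}$ element-wise and $G_j=G_{<j}\cdot L$. Consequently $L$ permutes the orbits of $G_{<j}$, and the transitivity of $G_j$ forces $L$ transitive on those orbits; so they all have a common size $k$, there are $m=n/k$ of them, and they form a block system for $G_j$, which witnesses $G_j\le\Sym_k\wr\Sym_m$.

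For the rank bound I write $r-1=j+(r-1-j)$ and bound each summand using Theorem~\ref{w1}. Fixing a block $B_1$, the bound $j\le k-1$ follows from showing that the restriction $\pi:G_{<j}\to\Sym(B_1)=\Sym_k$ is faithful. The key ``transport'' step is: for $\lambda\in L$ with $\lambda(B_1)=B_i$ and $g\in G_{<j}$, the relation $\lambda g=g\lambda$ gives $g|_{B_i}=\lambda\circ g|_{B_1}\circ\lambda^{-1}$, so $g|_{B_1}=1$ implies $g|_{B_i}=1$ for every $i$, whence $\ker\pi=1$. Since $\{\rho_0,\ldots,\rho_{j-1}\}$ remains independent in $G_{<j}$ (any dependency there would produce one in $G$), its $\pi$-image is an independent set of $j$ involutions in a subgroup of $\Sym_k$, and Theorem~\ref{w1} yields $j\le k-1$.

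For the symmetric bound $r-1-j\le m-1$ the natural move is to apply Theorem~\ref{w1} to the transitive block action $\bar L\le\Sym_m$ once the images $\bar\rho_{j+1},\ldots,\bar\rho_{r-1}$ are shown to form an independent set in $\bar L$. This is the main obstacle, because the transport trick does not reverse: nothing centralises $L$ in a way that links its action on different blocks. I would argue contrapositively: a dependence $\bar\rho_\ell=\bar w$ with $w\in\langle\rho_i:i>j,\,i\neq\ell\rangle$ produces $\mu:=\rho_\ell w^{-1}\in L\cap\Sym_k^m$, and since $\mu$ centralises the transitive subgroup $G_{<j}|_{B_1}$, its restriction $\mu|_{B_1}$ lies in the semiregular centraliser $C_{\Sym(B_1)}(G_{<j}|_{B_1})$. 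The delicate technical step is to show $L\cap\Sym_k^m\subseteq G_{<j}$ in this sggi setting; once that is in hand, $\mu$ can be absorbed into the other generators to give $\rho_\ell\in\langle\rho_i:i\neq\ell,j\rangle$, contradicting independence of $S$ in $G$. Adding the two bounds then yields $r-1\le(k-1)+(m-1)$.
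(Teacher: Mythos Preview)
Your argument for the containment $G_j\le\Sym_k\wr\Sym_m$ and the bound $j\le k-1$ is correct and essentially matches the paper's, though the paper is terser: it simply notes $G_{<j}\le\Sym_k$ (your transport argument is precisely what justifies this embedding) and invokes Theorem~\ref{w1}.

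The genuine gap is in the second bound $r-1-j\le m-1$. You yourself flag the ``delicate technical step'' $L\cap\Sym_k^{\,m}\subseteq G_{<j}$ as unproved, and indeed there is no reason it should hold: an element of $L$ that happens to fix every block setwise need not be expressible as a word in $\rho_0,\ldots,\rho_{j-1}$. Without this inclusion your contrapositive collapses, and the semiregularity observation about $\mu|_{B_1}$ does not rescue it. More tellingly, your argument never uses the \emph{maximality} of $j$, which is the hypothesis that drives the paper's proof of exactly this inequality.

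The paper proceeds quite differently here. Maximality of $j$ means $G_i$ is intransitive for every $i>j$; hence for each such $i$ there exist two $G_{<j}$-orbits lying in distinct $G_i$-orbits. One then builds a graph on the $m$ blocks with, for each $i>j$, a single $i$-edge joining such a pair. This graph is a forest (an $i$-edge in a cycle would put its endpoints in the same $G_i$-orbit, contradicting the choice of edge), so with $m$ vertices it has at most $m-1$ edges, giving $r-1-j\le m-1$ directly. This is a fracture-graph style argument and sidesteps any need to prove independence of the block-action images.
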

\begin{proof} 
As $G_{r-1}$ is intransitive, $G_{<j}$ is intransitive. 
Hence, as $G_j=G_{<j}\times G_{>j}$, there is a block system for $G_j$ where the blocks are the $G_{<j}$-orbits. Let $k$ be the size of a $G_{<j}$-orbit and $m$ be the number of $G_{<j}$-orbits. It follows that  $G_j\leq \Sym_k\wr\Sym_m$.
Since $G_{<j}\leq \Sym_k$ is generated by $j$ elements, it follows that $j\leq k-1$.
As $j$ is the maximal label satisfying (c), $G_i$ is intransitive for $i>j$.
Thus, for  each $i>j$ there exists a pair of   $G_{<j}$-orbits that belong to different $G_i$-orbits.
Consider a graph whose vertices are the $G_{<j}$-orbits having exactly one edge $i$ between $G_{<j}$-orbits that belong to different $G_i$-orbits.
The graph is a forest with $m$ vertices and $(r-1)-j$ edges. Hence $(r-1)-j\leq m-1$.
Consequently $r-1\leq (k-1)+(m-1)$, as wanted.
\end{proof}


\subsection{String C-groups}\label{subSCG}

\begin{definition} A {\bf string C-group of rank $r$}, denoted as $\Gamma=(G, S)$, is a sggi which satisfies the following property called the {\bf intersection property}:
$$\forall J, K \subseteq \{0,\ldots,r-1\}, \langle \rho_j \mid j \in J\rangle \cap \langle \rho_k \mid k \in K\rangle = \langle \rho_j \mid j \in J\cap K\rangle.$$
\end{definition}

If $p_i$ is the order of $\rho_{i-1}\rho_i$,  $i=1,\ldots, r-1$,  the string C-group $G$ is the group of automorphisms of an abstract regular polytope with {\bf Schl{\"a}fli symbol} $\{p_1,\ldots, p_{r-1}\}$.

The set $S$ in the previous definition is an independent generating set for $G$. An immediate consequence of Theorem~\ref{CC} is the following:
\begin{coro}\label{CCcoro}
Let $\Gamma=(G,\{\rho_0,\ldots,\rho_{r-1}\})$ be a sggi of degree $n$ generated by independent involutions.
If $r=n-1$ and $n\geq 7$ then $\Gamma$ is a string C-group, namely it is the group of automorphisms of the $(n-1)$-simplex.
\end{coro}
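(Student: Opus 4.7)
The plan is to apply Theorem~\ref{w1} to identify $G$, then Theorem~\ref{CC} to pin down the generators, and finally use the commuting property to force the underlying tree to be a path.

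Since $|S|=n-1$ and $G$ has degree $n$, Theorem~\ref{w1} immediately gives $G=\Sym_n$. Because $n\geq 7$, Theorem~\ref{CC} then applies and asserts that $S$ is either (a) $S(T)$ for some tree $T$ on $\{1,\dots,n\}$, or (b) of the form $\{s\}\cup\{(st)^{\epsilon(t)}\mid t\in S(T)\setminus\{s\}\}$ for some $s\in S(T)$. I would first rule out (b): each $(st)^{\pm 1}$ must be an involution, so $st$ has order $2$, which forces the transpositions $s$ and $t$ to be vertex-disjoint for every $t\in S(T)\setminus\{s\}$ (since otherwise $st$ would be a $3$-cycle). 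But then the two endpoints of $s$ meet no other edges of $T$, making $\{s\}$ an isolated edge-component of $T$, which contradicts the connectedness of $T$ as soon as $n\geq 3$.

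Hence $S=S(T)$. Two transpositions commute iff they are vertex-disjoint, so the commuting property becomes: if $|i-j|>1$, then the edges $\rho_i,\rho_j$ of $T$ share no vertex. If $T$ had a vertex $v$ of degree at least $3$, the three edges of $T$ incident to $v$ would carry three distinct indices $i_1<i_2<i_3$, and $i_3-i_1\geq 2$, yet $\rho_{i_1}$ and $\rho_{i_3}$ share the vertex $v$, contradicting the commuting property. Thus $T$ has maximum degree at most $2$ and, being a tree, is a path. Relabelling along this path shows that $S$ is the standard Coxeter generating set of $\Sym_n$ of type $A_{n-1}$, which is well known to satisfy the intersection property, so $\Gamma$ is the automorphism group of the $(n-1)$-simplex. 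The only mildly non-trivial steps are the disjointness analysis ruling out case (b) and the triangle obstruction that forces $T$ to be a path; neither presents a genuine obstacle since all the heavy lifting is done by Theorems~\ref{w1} and~\ref{CC}.
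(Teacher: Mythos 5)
Your proof is correct and follows exactly the route the paper intends: the paper states Corollary~\ref{CCcoro} as an immediate consequence of Theorem~\ref{CC} without spelling out details, and your argument (Theorem~\ref{w1} to get $G=\Sym_n$, Theorem~\ref{CC} to reduce to a tree, involutions ruling out case (b), and the commuting property forcing the tree to be a path) is precisely that reduction made explicit. No discrepancies with the paper's approach.
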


The following two theorems of \cite{fl} give the classification of the abstract regular polytopes, whose the automorphism group is a subgroup of $\Sym_n$, having rank $r\in\{n-1,n-2\}$.

\begin{thm}\cite[Theorem 1]{fl}\label{1}
 For $n\geq 5$, the $(n-1)$-simplex is, up to isomorphism, the unique polytope of rank
$n-1$ having a group $\Sym_n$ as automorphism group. For $n = 4$, there are, up to isomorphism and
duality, two abstract regular polyhedra whose automorphism group is $\Sym_4$, namely the hemicube
and the tetrahedron. Finally, for $n = 3$, there is, up to isomorphism, a unique abstract regular
polygon whose automorphism group is $\Sym_3$, namely the triangle.
\end{thm}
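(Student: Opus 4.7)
The plan is to split on $n$ and exploit the results already assembled in the excerpt. For $n \geq 7$, a regular polytope of rank $n-1$ with automorphism group $\Sym_n$ is exactly a string C-group $(\Sym_n, S)$ with $|S|=n-1$; Theorem~\ref{w1} forces $S$ to be independent, and Corollary~\ref{CCcoro} (itself immediate from Cameron--Cara's Theorem~\ref{CC}) pins the sggi down to the Coxeter group of type $A_{n-1}$, i.e.\ the $(n-1)$-simplex. This handles the main range with essentially no new work.

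For $n \in \{5,6\}$, Theorem~\ref{CC} does not directly apply, so I would re-enact its conclusion by hand (or computationally; the groups have order $120$ and $720$). Independence forces the $n-1$ involutions of any candidate sggi to be transpositions or $(2,2)$-cycles whose supports cover $\{1,\dots,n\}$; imposing the commuting property together with transitivity and the intersection property eliminates all configurations except the path $\{(i,i+1)\}_{i=1}^{n-1}$, yielding again the simplex. The enumeration is short because at rank $n-1$ the permutation representation graph of each $\rho_i$ is extremely constrained, and any deviation from a path-shaped set of transpositions immediately fails either commutation (for a tree with a vertex of degree $\geq 3$) or independence/intersection (once $(2,2)$-cycles are introduced).

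For $n=4$, I would classify all triples $(\rho_0,\rho_1,\rho_2)$ of involutions generating $\Sym_4$ that satisfy $(\rho_0\rho_2)^2=1$, modulo conjugacy in $\Sym_4$ and the self-duality $\rho_i\leftrightarrow\rho_{2-i}$. Involutions in $\Sym_4$ come in two cycle types, a transposition $T$ or a $(2,2)$-cycle $D$, so the patterns for $(\mathrm{type}(\rho_0),\mathrm{type}(\rho_1),\mathrm{type}(\rho_2))$ are finitely many. After imposing generation, independence, commuting, and intersection, exactly two patterns survive up to isomorphism and duality, namely $\{(1,2),(2,3),(3,4)\}$ (pattern $TTT$) giving the tetrahedron, and $\{(1,2),(1,3)(2,4),(3,4)\}$ (pattern $TDT$) giving the hemicube $\{4,3\}/2$. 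For $n=3$, both the commuting and intersection properties are vacuous at rank $2$, and the unique involution pair generating $\Sym_3$ up to conjugacy is $\{(1,2),(2,3)\}$, giving the triangle.

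The main obstacle is the $n=4$ case: at rank $3$ the commuting property reduces to the single relation $(\rho_0\rho_2)^2=1$, so many triples pass the first filter, and one must check the intersection property case-by-case against the subgroup lattice of $\Sym_4$. The bookkeeping is finite but not automatic --- in particular, one has to verify that the $TDT$ triple above really does produce a string C-group rather than collapsing $\langle\rho_0,\rho_1\rangle\cap\langle\rho_1,\rho_2\rangle$ to something strictly larger than $\langle\rho_1\rangle$, which is exactly the check that exhibits the hemicube as a second, genuinely distinct polyhedron alongside the tetrahedron.
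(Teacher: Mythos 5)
You should first note that the paper does not prove Theorem~\ref{1} at all: it is imported verbatim from \cite[Theorem 1]{fl}, so there is no in-paper argument to match. That said, for $n\geq 7$ your route is exactly the mechanism on which the cited proof and the paper's surrounding machinery rest (Theorem~\ref{CC} and Corollary~\ref{CCcoro}), with one misattribution: the independence of $S$ is not ``forced by Theorem~\ref{w1}'' --- Whiston's theorem only bounds the size of an independent set and identifies $\Sym_n$ --- but follows directly from the intersection property, taking $J=\{i\}$ and $K=I_i$ to get $\langle\rho_i\rangle\cap\langle\rho_j \mid j\neq i\rangle=1$. Also, for $n=6$ your enumeration premise is wrong: involutions in $\Sym_6$ can be $(2,2,2)$-cycles, not only transpositions and $(2,2)$-cycles, and $n=6$ is precisely the degree excluded from Theorem~\ref{CC} because of the exceptional outer automorphism; so the by-hand reduction does not go through as stated and you genuinely need the computational fallback you mention in passing.

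The concrete error is in the $n=4$ case: your proposed hemicube triple $\{(1,2),\,(1,3)(2,4),\,(3,4)\}$ does not generate $\Sym_4$. Conjugation gives $(3,4)=\rho_1\rho_0\rho_1$, so the set is not even independent, and $\langle\rho_0,\rho_1,\rho_2\rangle$ is the dihedral Sylow $2$-subgroup $\Dy_8$ of order $8$; more generally, no pattern $TDT$ with disjoint end transpositions can work, since $(1,2)$, $(3,4)$ and any $(2,2)$-cycle generate a $2$-group. The hemicube instead has the double transposition at an end of the string, e.g.\ $\rho_0=(1,4)(2,3)$, $\rho_1=(1,2)$, $\rho_2=(2,3)$, with Schl\"afli symbol $\{4,3\}$: here $\rho_0\rho_1$ has order $4$, $\rho_1\rho_2$ order $3$, $\rho_0\rho_2=(1,4)$ is an involution, the three elements generate $\Sym_4$, and the intersection $\langle\rho_0,\rho_1\rangle\cap\langle\rho_1,\rho_2\rangle=\langle\rho_1\rangle$ holds because the first group has order $8$ and the second order $6$. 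Your final paragraph correctly identifies the intersection-property check as the delicate point for $n=4$, but the case analysis it is supposed to certify is organized around the wrong surviving pattern, so as written the $n=4$ classification would conclude with a triple that fails generation and independence.
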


\begin{thm}\cite[Theorem 2]{fl}\label{2}
For $n\geq 7$, there exists, up to isomorphism and duality, a unique $(n-2)$-polytope 
having a group $\Sym_n$ as automorphism group and Schl{\"a}fli symbol $\{4,6,3,3,\ldots, 3\}$.
\end{thm}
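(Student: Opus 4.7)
The statement asserts both existence and uniqueness (up to isomorphism and duality) of a rank-$(n-2)$ regular polytope for $\Sym_n$ with Schl\"afli symbol $\{4,6,3,\ldots,3\}$. My plan is to split the proof into an explicit construction and a structural classification.

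For existence, I would exhibit $n-2$ involutions $\rho_0,\ldots,\rho_{n-3}\in\Sym_n$ realising the required orders and then verify the string C-group axioms. The Schl\"afli symbol dictates the local shape of the permutation representation graph $\mathcal{G}$: to force $|\rho_0\rho_1|=4$ the $\{0,1\}$-subgraph must contain a component whose dihedral action has order $8$, such as a path on $4$ vertices with edges labelled $0,1,0$ (giving $\rho_0=(1,2)(3,4)$ and $\rho_1=(2,3)$); to force $|\rho_1\rho_2|=6$ the $\{1,2\}$-subgraph must contain a component of dihedral order $12$, realised for instance by a path on $6$ vertices alternating labels $1,2,1,2,1$; and to force $|\rho_i\rho_{i+1}|=3$ for $i\geq 2$, successive generators must share a single moved point, forming a transposition path as in the simplex CPR graph. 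Assembling these local pieces on $n$ vertices yields a candidate $\mathcal{G}$. I would then check: (i) the commuting property for non-consecutive labels, which is readable off $\mathcal{G}$; (ii) the target orders, by direct local computation; (iii) that $\langle \rho_0,\ldots,\rho_{n-3}\rangle = \Sym_n$, via transitivity plus the presence of an appropriate (double) transposition; and (iv) the intersection property, by induction on the rank using the standard quotient criterion for sggi's.

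For uniqueness, the plan is to classify all string C-group representations of $\Sym_n$ of rank $n-2$ with the prescribed Schl\"afli symbol and show they coincide with the constructed one up to isomorphism and duality. The starting point is the existing theory of large independent generating sets: Theorem~\ref{w1} gives the sharp upper bound $n-1$, and Theorem~\ref{CC} classifies the size-$(n-1)$ sets via trees. For size $n-2$ one should first establish (or invoke from the literature) the analogous classification — admissible sets being small controlled perturbations of the tree shapes — and then overlay the Schl\"afli constraints. The value $4$ pins down the local structure at $\rho_0,\rho_1$, the value $6$ at $\rho_1,\rho_2$, and the $3$'s force $\rho_2,\ldots,\rho_{n-3}$ to be transpositions forming a path. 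A careful case analysis then collapses the surviving possibilities to the constructed polytope and its dual.

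The principal obstacle is the uniqueness half. Existence reduces to a finite local check once the candidate CPR graph has been drawn. Uniqueness, by contrast, demands sharp control over near-maximal independent generating sets of $\Sym_n$, which is substantially more delicate than the size-$(n-1)$ case handled by Cameron--Craven. The hardest combinatorial bottleneck is the ``$4$--$6$'' interface, where the orders $|\rho_0\rho_1|=4$ and $|\rho_1\rho_2|=6$ interact with the commuting relation $(\rho_0\rho_2)^2=1$: one must enumerate all involution triples compatible with these constraints, decide which can be extended to a full rank-$(n-2)$ generating set, and verify the intersection property at every step. The latter is most naturally organised by induction, decomposing $\Gamma$ into its parabolics $\Gamma_{>0}$ and $\Gamma_{<n-3}$, which have Schl\"afli symbols $\{6,3,\ldots,3\}$ and $\{4,6,3,\ldots,3\}$ of strictly smaller rank respectively.
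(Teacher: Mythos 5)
First, a framing point: the paper you were given does not prove this statement at all --- it is quoted verbatim from \cite{fl} (with the corrigendum \cite{corr}), and the present paper only records the resulting permutation representation graphs in Proposition~\ref{highC}. So the comparison below is against the original proof in \cite{fl}, which proceeds by induction on $n$ through an analysis of CPR graphs, splitting on whether the maximal parabolic subgroups $G_i$ are all intransitive or some $G_i$ is transitive (the intransitive branch is the ancestor of Proposition~\ref{sggiint} here and of Table 2 of \cite{extension}).

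Your uniqueness half has a genuine gap, in two respects. First, you treat the Schl\"afli symbol $\{4,6,3,\ldots,3\}$ as a hypothesis and propose to classify only the polytopes carrying that symbol; but in the theorem the symbol is part of the \emph{conclusion} --- uniqueness is asserted among \emph{all} rank-$(n-2)$ polytopes for $\Sym_n$, and the bulk of the work in \cite{fl} is precisely ruling out every other symbol. Even completed, your plan proves a strictly weaker statement. Second, your plan rests on ``establishing (or invoking from the literature)'' a Cameron--Cara-type classification of independent generating sets of $\Sym_n$ of size $n-2$. No such classification exists: Theorem~\ref{CC} (\cite{2002CC}) is specific to size $n-1$, where independence forces very rigid tree structure, and extending it one step down is an open problem at least as hard as the theorem itself. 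The actual proof never goes through such a classification; it works directly with the sggi and intersection-property structure on the CPR graph, and the existence of the corrigendum \cite{corr} (a missed graph in the original enumeration) is evidence of how delicate even that route is.

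There is also a concrete error in your local analysis that would sink the case enumeration. You claim that $|\rho_1\rho_2|=6$ forces a $\{1,2\}$-component of dihedral order $12$ (a six-vertex alternating path). This is false: the order of $\rho_1\rho_2$ is the least common multiple of its orders on the $\{1,2\}$-components, and in the actual polytope one has, reading off the graph of Proposition~\ref{highC}, $\rho_0=(2,3)$, $\rho_1=(1,2)(3,4)$, $\rho_2=(4,5)$, so that $\rho_1\rho_2=(1,2)(3,4,5)$ has order $6$ as $\mathrm{lcm}(2,3)$ from a $1$-edge and a $\{1,2\}$-path of length two lying in \emph{distinct} components. (Your order-$4$ forcing is fine, since an lcm of $2$'s cannot equal $4$, but the order-$6$ forcing is not.) An enumeration built on the dihedral-order-$12$ assumption excludes the very polytope it is supposed to single out. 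Relatedly, your two sample local pieces are mutually inconsistent --- $\rho_1$ cannot simultaneously be the transposition $(2,3)$ in the $\{0,1\}$-piece and a $3$-transposition in a six-vertex $\{1,2\}$-path --- and if you start from $\rho_0=(1,2)(3,4)$, $\rho_1=(2,3)$ you will find that no transposition $\rho_3$ commuting with $\rho_0$ and $\rho_1$ continues the symbol with $3$'s: the correct assembly (up to duality) must place the single transposition $\rho_0$ in the interior of the initial path, as in Proposition~\ref{highC}.
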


In \cite{fl} and \cite{corr} the authors give the possible permutation representation graphs of the string C-groups of Theorems 1 and 2. We list them in the following proposition.

\begin{prop}\label{highC}
The permutation representation graph of degree $n$ of the group of automorphisms  of the abstract regular polytopes of rank $r= n-1$ ($n\geq 5$) is  as follows:
$$ \xymatrix@-1.3pc{*+[o][F]{}  \ar@{-}[rr]^0 &&*+[o][F]{}  \ar@{-}[rr]^1 && *+[o][F]{} \ar@{.}[rr]&& *+[o][F]{}\ar@{-}[rr]^{r-1}&& *+[o][F]{}}$$
The permutation representation graph of degree $n$ of the group of automorphisms of the abstract regular polytope of rank $r=n-2$ ($n\geq 7$) is, up to duality,  as follows:
$$ \xymatrix@-1.3pc{*+[o][F]{}  \ar@{-}[rr]^1&&*+[o][F]{}  \ar@{-}[rr]^0 && *+[o][F]{}  \ar@{-}[rr]^1&& *+[o][F]{} \ar@{-}[rr]^2&& *+[o][F]{} \ar@{.}[rr]&&*+[o][F]{}\ar@{-}[rr]^{r-1}&& *+[o][F]{}}$$
\end{prop}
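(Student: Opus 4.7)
The plan is to combine the classification of independent generating sets of $\Sym_n$ in Theorem~\ref{CC} with the commuting property of sggi's, using Theorems~\ref{1} and~\ref{2} for existence and uniqueness of the polytopes themselves.

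For rank $r=n-1$, Theorem~\ref{w1} forces $G=\Sym_n$, and Theorem~\ref{CC} leaves two cases for the generating set $S$. In case (b) each generator distinct from $s$ has the form $(st)^{\pm 1}$; requiring it to be an involution forces $s$ and $t$ to be disjoint transpositions. But in any connected tree on $n\geq 7$ vertices every edge meets another, so $s$ cannot be disjoint from every other $t\in S(T)$, and case (b) cannot occur. Hence $S=S(T)$ for some tree $T$. The commuting property then forces every vertex of $T$ to be incident to at most two edges whose labels form a pair of consecutive integers. Thus $T$ has maximum degree $2$ and is therefore a path; reading the labels along it yields a sequence in $\{0,\ldots,n-2\}$ in which consecutive terms differ by exactly one, and since every label appears this sequence must be $0,1,\ldots,n-2$ up to reversal, giving the first displayed graph.

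For rank $r=n-2$, Theorem~\ref{2} guarantees uniqueness of the polytope up to duality, together with its Schl\"{a}fli symbol $\{4,6,3,\ldots,3\}$, so it suffices to exhibit a single faithful permutation representation. One verifies that the graph shown defines involutions $\rho_0,\ldots,\rho_{r-1}$ on $n$ points, satisfies the commuting relations together with $(\rho_0\rho_1)^4=(\rho_1\rho_2)^6=1$ and $(\rho_i\rho_{i+1})^3=1$ for $i\geq 2$, and generates $\Sym_n$. The intersection property is then checked inductively using the standard criterion from~\cite{arp}, where the required transitivity of $\Gamma_{<r-1}$ and intransitivity of $\Gamma_{r-1}$ are visible directly from the graph. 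The main obstacle is the rank $n-2$ case: without Theorem~\ref{2}, one would need a self-contained analogue of Theorem~\ref{CC} for independent generating sets of size $n-2$ and then prune the resulting tree-based candidates via the commuting and intersection properties; invoking uniqueness lets us sidestep this casework entirely.
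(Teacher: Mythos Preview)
The paper does not supply its own proof of this proposition; the sentence preceding it simply records the permutation representation graphs obtained in \cite{fl} and \cite{corr}.  Your write-up is therefore more self-contained than anything the paper offers, and the overall strategy is sound.  Two points should be made explicit, however.

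For rank $r=n-1$, your argument via Theorem~\ref{CC} is correct where it applies, but Theorem~\ref{CC} is stated only for $n\geq 7$, while the proposition claims $n\geq 5$.  The cases $n=5,6$ must be handled separately (by hand, or by appeal to \cite{fl}); for $n=6$ one should also be aware that $\Sym_6$ admits a second faithful transitive action on six points, under which the simplex generators become triple transpositions rather than single ones, so the statement is tacitly about the natural action.

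For rank $r=n-2$, the step from ``the polytope is unique up to duality'' (Theorem~\ref{2}) to ``it suffices to exhibit a single faithful permutation representation'' silently uses one more ingredient: that $\Sym_n$ has, up to equivalence, a unique faithful permutation representation of degree $n$.  For $n\geq 7$ this is standard (the only index-$n$ subgroup of $\Sym_n$ is a point stabiliser $\Sym_{n-1}$, and no intransitive degree-$n$ representation is faithful), and together with $\mathrm{Out}(\Sym_n)=1$ it does yield uniqueness of the graph up to relabelling of the vertices and duality.  Once that sentence is inserted, your exhibition-plus-verification argument is complete; the inductive intersection-property check you sketch is then the routine part.
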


In \cite{extension}, the authors also provide a classification of abstract regular polytopes whose automorphism group is a subgroup of $\Sym_n$, with rank $r \in {n - 3, n - 4}$ for sufficiently large $n$. One of the key consequences of their results is the following.

\begin{prop}\label{sggiint}
Let $\Gamma=(G,S)$ be a sggi and $x\in \{ 1, 2, 3, 4 \}$.  If $G_j$ is  intransitive for all $j\in\{ 0, \ldots, r -1 \}$, $r = n- x$  and  $n\geq 3 + 2x$, then $G\cong \Sym_n$.
Moreover if $x\in\{1,2\}$ then $\Gamma$ has, up to duality, one of the permutation representation graph given in Proposition~\ref{highC}.
\end{prop}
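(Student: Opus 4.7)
The plan is to split the argument into two halves: first establish that $G \cong \Sym_n$, then identify the graph for $x \in \{1,2\}$ from what the authors already cite.

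The opening move is to notice that the hypothesis on the $G_j$'s forces $S$ to be an \emph{independent} generating set. Writing $G = \langle S\rangle$ transitively on $n$ points (any imprimitivity on orbits would make some $G_j$ transitive on those orbits trivially), if some $\rho_j$ lay in $G_j$ we would have $G_j = G$ transitive, contradicting the hypothesis. Thus $S$ is independent of size $r = n - x$ in $G \leq \Sym_n$, so Whiston's theorem (Theorem~\ref{w1}) gives $r \leq n - 1$. In the base case $x = 1$, equality forces $G = \Sym_n$; Corollary~\ref{CCcoro} then identifies $\Gamma$ with the simplex, and Proposition~\ref{highC} yields the stated path as permutation representation graph.

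For $x \geq 2$ the task is to rule out every transitive proper subgroup $G < \Sym_n$ of degree $n$ carrying an independent string generating set of size $n - x$ with all $G_j$ intransitive. Non-trivial primitive groups distinct from $\Alt_n$ are handled via \cite[Proposition 3.3]{2017CFLM} (as invoked in the proof of Lemma~\ref{maroti2}), which gives $r \leq n - 4$ for $n \geq 8$, killing $x \in \{1,2,3\}$ immediately; for $x = 4$ one uses the explicit list of small sporadic cases in Lemma~\ref{maroti2}, all of which have degree $\leq 7$ and hence violate $n \geq 3 + 2x = 11$. The alternating case is excluded by the rank bound $\lfloor (n-1)/2\rfloor$ from \cite{2017CFLM}: equating this with $n - x$ gives $n \leq 2x - 1$, contradicting $n \geq 3 + 2x$. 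What remains is the imprimitive case, treated by fixing a block system and applying the Lemma~\ref{X} style counting argument: if the $G_{<j}$-orbits have size $k$ and there are $m = n/k$ of them, the rank decomposes as an independent set within a block plus a forest on the block set, giving $r \leq (k-1) + (m-1) < n - x$ for the permitted values of $x$. This combination forces $G = \Sym_n$.

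Once $G = \Sym_n$ is established, the identification of $\mathcal{G}$ for $x = 1,2$ is handed off to the previous literature: Corollary~\ref{CCcoro} covers $x = 1$, and Theorem~\ref{2} together with Proposition~\ref{highC} and the duality clause give the unique graph (up to duality) for $x = 2$.

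The principal obstacle is the imprimitive case: one must verify that the Lemma~\ref{X} style counting really does fit all four values of $x$ under the hypothesis that \emph{every} $G_j$ is intransitive (not just $G_{r-1}$), which requires iterating the argument across both ends of the diagram and keeping careful track of the independent-set bound $r \leq n-1$ inside each block. A secondary subtlety is that Lemma~\ref{X} as stated assumes some $G_j$ is transitive; when all are intransitive, one must rework the same combinatorial count on the quotient of blocks chosen so as to make the relevant group act transitively, and then sum the local ranks.
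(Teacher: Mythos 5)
Your route is genuinely different from the paper's. The paper's proof is essentially a citation: Table 2 of \cite{extension} already classifies the permutation representation graphs of sggi's of degree $n$ and rank $n-x$, $x\le 4$, in which every $G_j$ is intransitive (the fracture-graph setting), and the authors merely observe that each listed graph contains a transposition $(a,b)$ whose stabilizer of $a$ is transitive on the remaining points, whence $G\cong\Sym_n$; the ``moreover'' clause is read off because for $x\in\{1,2\}$ that table contains only the two graphs of Proposition~\ref{highC}. You instead try to re-derive the statement from Whiston's theorem together with rank bounds for primitive, alternating and imprimitive groups, and this attempt has concrete gaps.

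First, the alternating case: Theorem~\ref{maxan} bounds the rank of \emph{string C-groups} for $\Alt_n$, but $\Gamma$ here is only a sggi --- no intersection property is assumed --- so that bound cannot be invoked; what you would need is a bound on independent string generating sets of involutions of $\Alt_n$ with all $G_j$ intransitive, which is precisely the kind of statement proved in \cite{extension} and not by you. The same objection applies to your ``moreover'' step for $x=2$: Theorem~\ref{2} classifies polytopes (string C-groups) of rank $n-2$ for $\Sym_n$, not sggi's, so it does not identify $\mathcal{G}$ under the present hypotheses (and for $x=1$ your appeal to Corollary~\ref{CCcoro} needs $n\ge 7$, while the statement allows $n=5,6$). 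Second, the primitive case at $x=4$ is not covered: the bound you cite from \cite{2017CFLM} is $r\le n-4$ for $n\ge 8$, which permits equality $r=n-4$, and the sporadic list of Lemma~\ref{maroti2} only concerns $r\ge n-3$, so primitive groups of degree $n\ge 11$ attaining rank $n-4$ are excluded by nothing you invoke. Third, the imprimitive case --- which you yourself flag as the principal obstacle --- is left as a sketch: Lemma~\ref{X} requires some $G_j$ to be transitive, and the bound $r\le(k-1)+(m-1)$ you want is not established under the actual hypotheses (the relevant statements in Proposition~\ref{LCR} are proved for string C-groups and carry extra conditions, e.g.\ the labels of $L$ forming an interval and $m\neq 2$, with the two-block case needing separate treatment). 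As written the proposal is therefore incomplete; filling these holes would essentially amount to redoing the fracture-graph analysis of \cite{extension} on which the paper's short proof rests.
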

\begin{proof}
We just need to observe that each sggi $G$ having one of the permutation representation graphs of  Table 2 of \cite{extension} is isomorphic to $\Sym_n$.
Indeed in each case we find a transposition $(a,b)\in G$ such that the stabilizer of $a$ is transitive of $\{1,\ldots,n\}\setminus\{a\}$. Hence $G\cong \Sym_n$.

In addition, when $x\in\{1,2\}$ Table 2 of \cite{extension}  gives only two possibilities for the permutation representation graph, precisely the ones of Proposition~\ref{highC}.
 \end{proof}

 The main goal of this paper is to study string C-groups $\Gamma=(G,S)$ of rank $r\geq n/2$, for $n\geq 14$, when $G$ is a proper subgroup of $\Sym_n$ that is distinct from $\Sym_n$ and $\Alt_n$. The following proposition shows that primitive groups cannot arise in this classification.

\begin{prop}\label{prim}\cite[Proposition 3.2]{2016CFLM}
If $\Gamma=(G,S)$ is a string C-group, where $G\leq \Sym_n$ is a primitive group other than $\Sym_n$, $\Alt_n$ or any of the groups in Table~\ref{primPolys}, then $r<n/2$.
\end{prop}

\begin{table}[htbp]
\begin{center}
\begin{tabular}{|ccc|}
\hline
$n$&$G$&Schl\"afli  symbols\\
\hline
&&\\[-5pt]
10&$\Sym_6$&$\{3,3,3,3\}$\\[5pt]
6&$\Alt_5$&$\{3,5\}$, $\{5,5\}$\\[5pt]
6&$\Sym_5$&$\{3,3,3\}$, $\{4,5\}$, $\{4,6\}$, $\{5,6\}$, $\{6,6\}$\\[5pt]
\hline
\end{tabular}\\[5pt]
\caption{Primitive string C-groups of degree $n$ and rank $r\ge n/2$ other than $\Sym_n$ and $\Alt_n$.}\label{primPolys}
\end{center}
\end{table}

The following theorem summarises the results regarding the maximal rank of a string C-group $\Gamma=(G,S)$ where $G=\Alt_n$.

\begin{thm}\cite[Theorem 1.1]{2017CFLM}\label{maxan}
The maximal rank of a string C-group for $\Alt_n$ is $\lfloor\frac{n-1}{2}\rfloor$ if $n\geq 12$. If $n=3, 4, 6, 7$ or $8$, the group $\Alt_n$ does not admit a string C-group.  For the remaining degrees, the maximal ranks are listed in the following table:\\
\begin{center}
\begin{tabular}{c|c}
Group & Maximal Rank\\
\hline
$\Alt_5$& 3\\
$\Alt_9$& 4\\
$\Alt_{10}$& 5\\
$\Alt_{11}$& 6\\
\end{tabular}
\end{center}
\end{thm}

Finally, we state the theorem that establishes the bound $n/2 + 1$ for the rank of any transitive subgroup of $\Sym_n$ distinct from $\Alt_n$ and $\Sym_n$.

\begin{thm}\cite[Proposition 2.1]{2016CFLM}\label{maximprim}
Let $G$ be a transitive imprimitive subgroup of $\Sym_n$. If $\Gamma=(G,S)$ is a string C-group of rank $r$, then $r \leq n/2+1$. Moreover if $r=n/2+1$ and $n\geq 10$ then $G\cong \Cyc_2\wr \Sym_{n/2}$ and  $n\equiv 2 \mod 4$.  If $r=n/2+1$ and $n\leq 9$ then $\Gamma$ is one of the string C-groups of the following table:
\begin{center}
\begin{table}[htbp]
\begin{tabular}{|c|c|c|}
\hline
$n$&G& Schl\"afli symbols\\
\hline
&&\\[-5pt]
8&$2^4:\Sym_{3}:\Sym_{3}$&$\{3,4,4,3\}$\\[5pt]
6&$\Sym_3 \times \Sym_3$&$\{2,3,3\}$\\[5pt]
6&$2^3:\Sym_{3}$&$\{2,3,3\}$\\[5pt]
6&$2^3:\Sym_{3}$&$\{2,3,4\}$\\[5pt]
\hline
\end{tabular}\\[5pt]
\caption{Imprimitive string C-groups of degree $n\leq 9$ with rank $r\ge n/2+1$.}\label{imprimPolys}
\end{table}
\end{center}

\end{thm}

The goal of this paper is to extend this result to the case where the rank is exactly $r = n/2$.

\subsection{ Permutation representation of string C-groups for $\Sym_n$ of rank $r\in\{n-2,n-1\}$ on $2n$ points}\label{subS2n}

There is a one-to-one correspondence between faithful transitive permutation representations of a group 
$G$ and its core-free subgroup\footnote{$H$ is a core-free subgroup of $G$ when $\cap_{g\in G} g^{-1}Hg$ is the trivial.} \cite[Section 1.3]{DixMor}.
With this in mind, we show below that the automorphism group of the $(n-1)$-simplex, as well as that of the polytope of rank $n-2$ having Schl\"{a}fli type $\{4,6,3,\ldots,3\}$,  admits a unique transitive permutation representation of degree 
$2n$ when $n\geq 7$.

\begin{prop}\label{2npointsub}
  Let $n \geq 7$. There exists exactly one faithful transitive permutation representation of $\Sym_n$ on $2n$ points.
  \end{prop}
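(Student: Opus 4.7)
The plan is to reduce the proposition to classifying the $\Sym_n$-conjugacy classes of core-free subgroups $H \leq \Sym_n$ of index $2n$, since these correspond bijectively (up to equivalence) to the faithful transitive permutation representations of degree $2n$. Existence is immediate with $H = \Alt_{n-1}$, realized as the stabilizer of the point $n$ inside $\Alt_n$: it has order $(n-1)!/2$ (hence index $2n$), and is core-free because its core in $\Sym_n$ is a normal subgroup of $\Sym_n$ contained in $\Alt_{n-1}$, but for $n\geq 5$ the only proper non-trivial normal subgroup of $\Sym_n$ is $\Alt_n$, and this is not contained in $\Alt_{n-1}$.

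For uniqueness let $H \leq \Sym_n$ be any core-free subgroup of index $2n$, so $|H| = (n-1)!/2$, and split into cases on the orbit structure of $H$ on $\{1, \ldots, n\}$. If $H$ is intransitive with orbit partition $(k_1, \ldots, k_s)$, $s \geq 2$, then $H$ embeds in the Young subgroup $\Sym_{k_1} \times \cdots \times \Sym_{k_s}$, forcing $\prod k_i! \geq (n-1)!/2$. Comparing factorial products over partitions of $n$, one checks that for $n \geq 6$ only the partition $(n-1,1)$ meets this bound: the next-largest product, $2(n-2)!$ attained at $(n-2,2)$, is strictly less than $(n-1)!/2$ precisely when $n \geq 6$. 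Hence $H \leq \Sym_{n-1}$ has index $2$, giving $H = \Alt_{n-1}$ as required.

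The transitive case is where I expect the main work. If $H$ is primitive and distinct from $\Alt_n$ and $\Sym_n$, then Bochert's classical bound yields $[\Sym_n : H] \geq \lfloor (n+1)/2 \rfloor !$, which strictly exceeds $2n$ for every $n \geq 7$. If instead $H$ is transitive imprimitive, preserving a block system with $m \geq 2$ blocks of size $k = n/m \geq 2$, then $H \leq \Sym_k \wr \Sym_m$, so $|H| \leq (k!)^m m!$; an elementary verification shows that $(k!)^m m! < (n-1)!/2$ for every factorization $n = km \geq 7$ with $k, m \geq 2$, with the tightest case being $m = 2$ (and intermediate factorizations producing smaller wreath-product orders). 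In both subcases the order of $H$ cannot reach $(n-1)!/2$, so no transitive $H$ exists, and the only conjugacy class of core-free subgroups of index $2n$ is that of $\Alt_{n-1}$.
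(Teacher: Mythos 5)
Your proof is correct and follows the paper's overall strategy---reduce to classifying core-free subgroups $H\leq \Sym_n$ of index $2n$ up to conjugacy, then split into the intransitive, transitive imprimitive, and primitive cases---but it takes a genuinely different route in the hardest case. Your intransitive and imprimitive estimates match the paper's in substance (the paper phrases the trichotomy via the O'Nan--Scott theorem and bounds $|H|$ by the order of a Young subgroup, resp.\ of $\Sym_{n/a}\wr\Sym_a$); your extra details, e.g.\ that $\Alt_{n-1}$ is core-free and that the second-largest factorial product $2(n-2)!$ falls below $(n-1)!/2$, are welcome but not a departure. The divergence is in the primitive case: the paper uses the Praeger--Saxl bound $|H|\leq 4^n$, which only forces $n\leq 11$, and then eliminates the remaining small degrees by consulting the Connor--Leemans atlas of subgroup lattices, whereas you invoke Bochert's classical bound $[\Sym_n:H]\geq \lfloor (n+1)/2\rfloor!$, which exceeds $2n$ uniformly for all $n\geq 7$ (since $\lfloor (n+1)/2\rfloor\geq 4$ and $m!>4m\geq 2n$ for $m\geq 4$). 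Your route is thus uniform and computer-free, at the price of citing a deeper classical theorem; the paper's route uses a softer, more readily available bound but needs an auxiliary computational check. One presentational point: your assertion that $(k!)^m m!<(n-1)!/2$ for every factorization $n=km$ with $k,m\geq 2$ is stated rather than proved; it is indeed routine, reducing in the extremal case $m=2$ to $\binom{n}{n/2}>4n$, which holds for all even $n\geq 8$, and the odd case is vacuous for $n=7$---but a line to this effect would make the argument self-contained.
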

  \begin{proof}
  Let us prove that, up to conjugacy, there exists only one core-free subgroup of $\Sym_n$ of index $2n$ when $n\geq 7$.
  Suppose that $H$ is a subgroup of  $\Sym_n$ of index at most  $2n$. 
  By the O'Nan-Scott Theorem \cite{aschbacher1985maximal}, we have one of the following possibilities for $H$:
  (a) $H\leq \Sym_a \times \Sym_{n-a}$, for $1 \leq a \leq n/2$ ; (b)  $H\leq \Sym_{n/a}^a \rtimes \Sym_{a}$, for $a|n$ and $1 < a < n$;
  or (c) $H$ is a primitive subgroup of $\Sym_n$ (different from $\Alt_n$ and $\Sym_n$). 
  
  In case (a) $|H|\leq a!(n-a)!$, hence $\frac{n!}{a!(n-a)!} \leq 2n$,  which is only possible if $a = 1$. 
  Then $H\leq \Sym_{n-1}$ and $|\Sym_{n-1}:H|=2$, which gives $H\cong \Alt_{n-1}$. Case (b) is never possible as  $\frac{n!}{a!(n/a)!^a} > 2n$, for $n\geq 7$.
  In case (c) using the bound for a primitive group given in
    \cite{praeger}, $|H| \leq 4^n$, meaning that $\frac{n!}{4^n} \leq 2n$,
    which is only possible for $n\leq 11$. But then, using \cite{dimitrisubglattice} we find no possibility for small degrees either.  
  \end{proof}
  
 Notice that, when $n=6$, we find two faithful transitive permutation representation of $\Sym_6$ on $12$ points.
  
  \begin{prop}\label{2npointsrep}
    If $\Gamma=(G,\{\rho_0,\ldots, \rho_{r-1}\})$ is a string C-group of rank $r\in\{n-1,n-2\}$ and $G$ is a transitive group of degree $2n$ isomorphic to $\Sym_n$,  then
   $\Gamma$ has, up to duality, one the following permutation representations:
   
   \begin{tabular}{ll}
    $r= n-1:$&  $ \xymatrix@-.55pc{
      *+[o][F]{}\ar@{-}[dd]_{I_0}\ar@{-}[rr]^0&&*+[o][F]{}\ar@{-}[dd]_{I_{0,1}}\ar@{-}[rr]^1&&*+[o][F]{}\ar@{-}[dd]_{I_{1,2}}\ar@{.}[rr]  &&*+[o][F]{} \ar@{-}[dd]^{I_{n-3,n-2}}\ar@{-}[rr]^{n-2}&&*+[o][F]{} \ar@{-}[dd]^{I_{n-2,n-1}}\ar@{-}[rr]^{n-1}&& *+[o][F]{}   \ar@{-}[dd]^{I_{n-1}} \\
      &&&&&&&&\\
      *+[o][F]{}\ar@{-}[rr]_0&&*+[o][F]{}\ar@{-}[rr]_1&&*+[o][F]{}\ar@{.}[rr]&&*+[o][F]{} \ar@{-}[rr]_{n-2}&&*+[o][F]{} \ar@{-}[rr]_{n-1}&& *+[o][F]{} }$\\
   
      $r= n-2:$&$ \xymatrix@-.55pc{
      *+[o][F]{}\ar@{-}[dd]_{I_1}\ar@{-}[rr]^1&& *+[o][F]{}\ar@{-}[dd]_{I_{0,1}}\ar@{-}[rr]^0&& *+[o][F]{}\ar@{-}[dd]_{I_{0,1}}\ar@{-}[rr]^1&&*+[o][F]{}\ar@{-}[dd]_{I_{1,2}}\ar@{-}[rr]^2 &&*+[o][F]{}\ar@{-}[dd]_{I_{1,2,3}}\ar@{.}[rr]  &&*+[o][F]{} \ar@{-}[dd]^{I_{1,n-3,n-2}}\ar@{-}[rr]^{n-2}&&*+[o][F]{} \ar@{-}[dd]^{I_{1,n-2,n-1}}\ar@{-}[rr]^{n-1}&& *+[o][F]{}   \ar@{-}[dd]^{I_{1,n-1}} \\
      &&&&&&&&\\
      *+[o][F]{}\ar@{-}[rr]_1&& *+[o][F]{}\ar@{-}[rr]_0&&*+[o][F]{}\ar@{-}[rr]_1 &&*+[o][F]{}\ar@{-}[rr]_2 &&*+[o][F]{}\ar@{.}[rr]&&*+[o][F]{} \ar@{-}[rr]_{n-2}&&*+[o][F]{} \ar@{-}[rr]_{n-1}&& *+[o][F]{} }$\\
  
 \end{tabular}

  \end{prop}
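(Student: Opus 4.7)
The plan is to reduce to the natural degree-$n$ representation of $\Sym_n$, invoke the high-rank classifications already at our disposal, and then lift back. By Proposition~\ref{2npointsub}, any faithful transitive action of $G\cong\Sym_n$ on $2n$ points is conjugate to the coset action on $\Sym_n/\Alt_{n-1}$, so we may identify the $2n$ points with $\{1,\dots,n\}\times\{+,-\}$ on which $\sigma\in\Sym_n$ acts by $(i,\epsilon)\mapsto(\sigma(i),\mathrm{sgn}(\sigma)\,\epsilon)$. This is the ``doubled'' arrangement visible in the two pictures.

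Now consider the same involutions $\rho_0,\dots,\rho_{r-1}$ acting on the natural $n$-point set. They still form a string C-group of the same rank $r\in\{n-1,n-2\}$, so Corollary~\ref{CCcoro} (when $r=n-1$) and Theorem~\ref{2} together with Proposition~\ref{highC} (when $r=n-2$) pin down the degree-$n$ representation graph up to duality: it is either the plain path labelled $0,1,\dots,n-2$, or the graph of Schl\"afli type $\{4,6,3,\dots,3\}$ labelled $1,0,1,2,\dots,n-3$.

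Once the generators are identified in $\Sym_n$, reading off their action on the $2n$-point set is direct. Every odd transposition $\rho_i=(j,k)$ acts as
\[\rho_i=(j^+,k^-)(k^+,j^-)\prod_{l\ne j,k}(l^+,l^-),\]
whereas the unique even generator $\rho_1=(1,2)(3,4)$, which only arises when $r=n-2$, preserves signs and acts as $(1^+,2^+)(1^-,2^-)(3^+,4^+)(3^-,4^-)$. Place the point $j^{a_j}$ on top of column $j$ and $j^{-a_j}$ on the bottom, where the signs $(a_j)\in\{+,-\}^n$ are chosen so that across every odd-labelled edge of the $n$-graph the two endpoints have opposite signs, while across every even-labelled edge they have equal signs. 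This turns every crossing transposition of an odd $\rho_i$ into a horizontal edge within each row, keeps all four edges of $\rho_1$ horizontal as well, and leaves between the two rows exactly the vertical ``bundle'' whose labels at column $j$ are the indices $i$ for which $\rho_i$ is odd and fixes $v_j$ in the $n$-graph; these sets coincide with the $I_{\cdots}$ displayed in the statement.

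The main obstacle is the consistency of the sign sequence $(a_j)$. When $r=n-1$ every generator is odd, so the constraints reduce to ``alternate signs along the path'', which is trivially satisfiable. When $r=n-2$ the only even generator is $\rho_1$, whose two fixed edges $\{v_1,v_2\}$ and $\{v_3,v_4\}$ of the $n$-graph are disjoint, so ``equal signs across each of those two edges, opposite signs across every other edge'' is still a consistent $2$-colouring of a tree and produces the unique pattern $a_1=a_2$, $a_3=a_4$, alternating thereafter (up to a global flip). With consistency settled, the identification of each vertical label-set becomes a routine column-by-column count, and the resulting diagram reproduces the two tables of the statement, the duality ambiguity arising precisely from the one already present at the $n$-point level.
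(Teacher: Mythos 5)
Your proposal is correct, and its skeleton matches the paper's: both proofs first pin down $\Gamma$ abstractly via the rank $n-1$ and $n-2$ classifications for $\Sym_n$ (Theorems~\ref{1} and \ref{2}, Proposition~\ref{highC}), and both rest on Proposition~\ref{2npointsub} to control the degree-$2n$ action. Where you diverge is in the final computation. The paper identifies the point stabilizer of the $2n$-point action with the rotation subgroup $H\cong\Alt_{n-1}$ of a facet ($G_0$, resp.\ $G_{r-1}$) and then obtains the graph by Todd--Coxeter coset enumeration; you instead realize the unique faithful transitive degree-$2n$ action explicitly as the signed doubling on $\{1,\dots,n\}\times\{+,-\}$, $(i,\epsilon)\mapsto(\sigma(i),\mathrm{sgn}(\sigma)\epsilon)$ (whose point stabilizer is indeed $\Alt_{n-1}$, so Proposition~\ref{2npointsub} applies), and then read off the graph from the parity of each generator together with a consistent $\pm$-colouring of the degree-$n$ tree. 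This buys a proof that is checkable by hand and makes transparent why the vertical bundles are exactly the complements $I_{\cdots}$ of the incident horizontal labels (with the label of the unique even generator $\rho_1$ omitted everywhere in the $r=n-2$ case), at the cost of being longer than the paper's one-line appeal to coset enumeration. Two small points you use implicitly and could state: $n\geq 7$, so that $\mathrm{Aut}(\Sym_n)$ is inner and the degree-$n$ graphs of Proposition~\ref{highC} apply to your chosen identification $G\cong\Sym_n$ without interference from outer automorphisms; and the uniqueness in Proposition~\ref{2npointsub} is up to conjugacy, i.e.\ up to relabelling the $2n$ points, which is exactly what is needed to transport your explicit model to the given action. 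Neither affects the validity of the argument.
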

  \begin{proof}
    Consider first that $\Gamma$ is the string C-group of rank $n-1$, which is known as the $(n-1)$-simplex.
    Consider the subgroup $H= \langle \rho_1\rho_2, \rho_2\rho_3, \ldots, \rho_{r-3}\rho_{r-2}, \rho_{r-2}\rho_{r-1} \rangle$ of $G_0$. This group is an index $2$ subgroup of $G_0$, known as the rotational group of $\Gamma_0$.
     As $G_0\cong \Sym_{n-1}$, $H\cong \Alt_{n-1}$.
     By Proposition~\ref{2npointsub} we only need to compute the Schreier coset graph with respect to $H$. Using the  Todd–Coxeter algorithm\footnote{An algorithm that enumerates the cosets of a subgroup in a finitely presented group and, in doing so, produces a permutation representation of the group's generators acting on those cosets \cite{Jo}.} we get the graph given in the statement of this proposition.
      
   Now consider the case $r=n-2$. In this case $\Gamma$ is the abstract regular polytope of Theorem~\ref{2}.
   Up to duality $\Gamma$ is the abstract regular polytope of with Schl\"{a}fli  symbol $\{4,6,3,\ldots,3\}$. Then $G_{r-1}\cong \Sym_{n-1}$ and the rotational subgroup $H$ of $G_{r-1}$ is isomorphic to $\Alt_{n-1}$. Applying the  Todd–Coxeter algorithm to $H$ we obtain the second permutation representation given in  the statement of this proposition.      
  \end{proof}


\subsection{Imprimitive groups with blocks of size $2$ with block action isomorphic to $\Sym_{n/2}$ or  $\Alt_{n/2}$}\label{Gk=2}

Mortimer, in \cite[Lemma 2]{mortimer}, provides sufficient conditions under which all 
$G$-submodules over a field of characteristic 2 are among the following: the trivial submodule consisting of the all-zeros vector, the constant submodule generated by the all-ones vector, the even-weight submodule, and the full module. In particular, when 
$G$ contains the alternating group of degree equal to the dimension of the module, the required conditions are satisfied. As a consequence, we obtain the following result.


\begin{thm}\label{C2Sn/2}
Let $n/2\geq 3$, Suppose that $G\leq \Cyc_2\wr \Sym_{n/2}$ and let $f:G\to \Sym_{n/2}$ be the homomorphism induced by the action of 
$G$ on the $n/2$ blocks.
If ${\rm Im}(f)$ is either $\Sym_{n/2}$ or $\Alt_{n/2}$  then ${\rm Ker}(f)$ is either trivial or isomorphic to one of the groups:
$$\Cyc_2, \, (\Cyc_2)^{n/2-1}\mbox{ or  }(\Cyc_2)^{n/2}.$$ 
\end{thm}
\begin{proof}
Let $n/2\geq 3$ and $G$ be either $\Sym_{n/2}$ or $\Alt_{n/2}$.  
By Lemma 2 of  \cite{mortimer} there are only four $G$-modules over a field of characteristic $2$.
Each submodule is in one-to-one correspondence with the possible kernels of $f$, as follows:

\begin{center}
\begin{tabular}{c|l}
Submodulo& $\textrm{Ker}(f)$\\
\hline
trivial submodule & trivial group\\
constant submodule  & $\Cyc_2$ \\
even-weight submodule &  $(\Cyc_2)^{n/2-1}$\\
full module & $(\Cyc_2)^{n/2}$
\end{tabular}
\end{center}
\end{proof}


The following lemma addresses the case in which the block action is the alternating group.

\begin{lemma}\label{C2An/2lemma}
Let $n\geq 6$ and $2<l<n/2$. Suppose that $G$ is a transitive subgroup of $\Cyc_2\wr \Alt_{n/2}$ and let $f:G\to \Alt_{n/2}$ be the homomorphism induced by the action of $G$ on the $n/2$ blocks.
If ${\rm Im}(f)\cong \Alt_{n/2}$ then the following holds:
\begin{enumerate}
\item If $G$ contains a transposition fixing the blocks, then ${\rm Ker}(f)\cong (\Cyc_2)^n$;
\item If $G$ contains a $2$-transposition fixing the blocks, then $(\Cyc_2)^{n-1}\leq {\rm Ker}(f)$; and
\item If $G$ contains a $l$-transposition fixing the blocks, then  $ (\Cyc_2)^{n-1}\leq {\rm Ker}(f)$.
\end{enumerate}
\end{lemma}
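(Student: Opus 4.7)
The plan is to combine Corollary~\ref{C2Sn/2} with an $\Alt_{n/2}$-orbit analysis inside the base group $(\Cyc_2)^{n/2}$. By that corollary ${\rm Ker}(f)$ is one of the four subgroups $\{1\}$, $\Cyc_2$, $(\Cyc_2)^{n/2-1}$ or $(\Cyc_2)^{n/2}$; my first step is to identify these concretely as $\mathbb{F}_2$-subspaces of the base group, indexed by the $n/2$ blocks: the zero subspace, the line spanned by the all-ones vector, the even-weight hyperplane $E=\{v:\sum v_i=0\}$, and the full space.

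Next I would note that a $k$-transposition of $G$ fixing every block is exactly an element of the base group of Hamming weight $k$, so the hypothesis of each item (a), (b), (c) produces a specific vector $v\in{\rm Ker}(f)$ of weight $1$, $2$ or $l$ respectively. Since ${\rm Ker}(f)$ is normal in $G$ and $f$ surjects onto $\Alt_{n/2}$, the conjugation action of $G$ on ${\rm Ker}(f)$ factors through $\Alt_{n/2}$ acting by coordinate permutation on the base group. Because $n/2\geq 3$ and $1\leq k\leq n/2-1$, $\Alt_{n/2}$ is transitive on $k$-subsets of $\{1,\ldots,n/2\}$, so every vector of the same weight as $v$ lies in ${\rm Ker}(f)$.

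From here each case reduces to a short linear-algebra check. For (a) the only subspace in the list containing a weight-$1$ vector is the full space, so ${\rm Ker}(f)=(\Cyc_2)^{n/2}$. For (b) the weight-$2$ vectors span $E\cong(\Cyc_2)^{n/2-1}$, forcing ${\rm Ker}(f)\supseteq E$. For (c) I would split on the parity of $l$: if $l$ is odd then the weight-$l$ vectors lie outside $E$, so ${\rm Ker}(f)$ must be the full space; if $l$ is even then, using $l<n/2$, I can choose two $l$-subsets of $\{1,\ldots,n/2\}$ sharing exactly $l-1$ elements and sum the corresponding vectors to obtain a weight-$2$ element of ${\rm Ker}(f)$, which reduces this subcase to (b).

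The only real subtlety I expect is the explicit identification of the conjugation action of $G$ on ${\rm Ker}(f)$ with the natural $\Alt_{n/2}$-action on the coordinates of the base group: this is standard for wreath products but should be stated carefully before invoking the transitivity of $\Alt_{n/2}$ on $l$-subsets.
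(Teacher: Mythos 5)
Your proposal is correct, and for parts (a) and (b) it is essentially the paper's own argument: in both, a block-fixing $k$-transposition is identified with a weight-$k$ vector of the base group (so the lemma's $(\Cyc_2)^n$ and $(\Cyc_2)^{n-1}$ are to be read as $(\Cyc_2)^{n/2}$ and $(\Cyc_2)^{n/2-1}$, consistently with Corollary~\ref{C2Sn/2}, as you do), and conjugation by $G$ permutes the coordinates through ${\rm Im}(f)=\Alt_{n/2}$, so transitivity on blocks (resp.\ on pairs of blocks) puts every weight-$1$ (resp.\ weight-$2$) vector into ${\rm Ker}(f)$, and the weight-$2$ vectors generate the even-weight subgroup. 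Where you genuinely diverge is part (c). The paper's proof does not use Corollary~\ref{C2Sn/2} there and, as written, only treats $l=n/2-1$: it builds explicit conjugators (a product of two $n/2$-cycles when $n/2$ is odd, an element acting as an $(n/2-1)$-cycle moving the fixed block when $n/2$ is even) so that $\tau\tau^{g}$ is a $2$-transposition, reducing to (b). You cover the whole stated range $2<l<n/2$: for even $l$, choosing two $l$-subsets meeting in $l-1$ points is the same ``product of two conjugates'' trick, carried out at the level of supports rather than via explicit permutations, while for odd $l$ you instead invoke the submodule classification of Corollary~\ref{C2Sn/2}, which even gives the stronger conclusion ${\rm Ker}(f)=(\Cyc_2)^{n/2}$. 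Note that your parity split is unnecessary: since $l+1\le n/2$, two $l$-subsets meeting in $l-1$ points exist for every such $l$, so the reduction to (b) works uniformly and (c) needs no appeal to the corollary at all. What your route buys is uniformity in $l$ (matching the statement, which the paper's written proof of (c) does not fully cover) and freedom from ad hoc permutation constructions; what the paper's route buys is self-containedness, avoiding the module classification. The one step to state carefully, as you yourself flag, is the identification of the conjugation action on ${\rm Ker}(f)$ with the coordinate-permutation action of $\Alt_{n/2}$; with that in place, your appeal to the transitivity of $\Alt_{n/2}$ on $k$-subsets for $1\le k\le n/2-1$ and $n/2\ge 3$ is valid, and the linear-algebra conclusions follow.
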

\begin{proof}
Let $\tau\in G$ be a permutation fixing the blocks. We  will consider separately the cases: (a)  $\tau$ is a transposition, (b) $\tau$ is a  $2$-transposition and  (c) $\tau$ is a $(n/2-1)$-transposition.

(a) Let $B_1$ be the block where $\tau$ acts nontrivially.  Let $B_i$ be other block. There exist a permutation $g\in G$ such that $gB_1=B_i$. 
Moreover $\tau^g$ is the transposition swapping the pairs of points of $B_i$. Hence ${\rm Ker}(f)\cong (\Cyc_2)^n$.

(b) As $\Alt_{n/2}$ is $2$-transitive, any $2$-transposition fixing the blocks can be obtained by a conjugation of $\tau$.
Let $B_1$ and $B_2$ be the blocks where $\tau$ acts nontrivially.
For any pair of $\{B_i,B_j\}$ there exist $g\in G$ such that $\{B_1,B_2\}g=\{B_i,B_j\}$.
Hence  $(\Cyc_2)^{n-1}\leq {\rm Ker}(f)$.

(c)  Suppose first that $n/2$ is odd. Let  $\alpha\in G$ be a product of two cycles of size $n/2$ (permuting all blocks in a single cycle). 
Then $\tau\tau^{\alpha}$ is a $2$-transposition fixing the blocks.
Now suppose that $n/2$ is even. Let $\mathrm{Fix}(\tau)=\{a,b\}$.
Let  $\beta\in G$ be a permutation that acts on the blocks as a cycle of size $n/2-1$ that does not fix the block $\{a,b\}$, that is $\{a,b\}\beta\neq \{a,b\}$.
Then, as before, $\tau\tau^{\beta}$ is a $2$-transposition fixing the blocks. 
In both cases, by (b), we get  $(\Cyc_2)^{n-1}\leq {\rm Ker}(f)$.
\end{proof}

Combining the results of Theorem~\ref{C2Sn/2} and Lemma~\ref{C2An/2lemma}, we get the following theorem.

\begin{thm}\label{indexes}
Let $H$ be either $\Alt_{n/2}$ or $\Sym_{n/2}$ and $n\geq 6$. If $G$ is a transitive subgroup of  degree $n$  embedded into $\Cyc_2\wr H$, then the following statements hold:
\begin{enumerate}
\item The index of $G$ in $\Cyc_2\wr H$ is either  $1,\,2,\,2^{n/2-1}\mbox{ or }2^{n/2}$;
\item If the index of $G$ in $\Cyc_2\wr H$ is equal to $2^{n/2-1}$ then $G$ contains the permutation  that swaps the two points in every block;
\item If the index of $G$ in $\Cyc_2\wr H$ is equal to $2$ then $G$ contains all even permutations fixing the blocks; and
\item Let $n/2$ be even. If $|G|=2|H|$ then $G$ is even.
\end{enumerate}
\end{thm}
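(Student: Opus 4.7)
The plan is to deduce statements~(1)--(3) directly from Corollary~\ref{C2Sn/2}. Under the implicit hypothesis that the block-action $f(G)$ coincides with $H$, Corollary~\ref{C2Sn/2} pins down $K:=\ker(f|_G)$ as either trivial, $\Cyc_2$, $(\Cyc_2)^{n/2-1}$ or $(\Cyc_2)^{n/2}$. Since $|\Cyc_2\wr H|=2^{n/2}|H|$ and $|G|=|H|\cdot|K|$, the index $2^{n/2}/|K|$ takes exactly the four values listed in~(1). For~(2), the proof of Corollary~\ref{C2Sn/2} identifies the unique one-dimensional $H$-submodule of $(\Cyc_2)^{n/2}$ as $\langle\mathbf{1}\rangle$; the generator $\mathbf{1}=(1,\ldots,1)$ corresponds in the wreath product to the permutation $\sigma$ that swaps the two points of every block, giving~(2). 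For~(3), the corresponding codimension-one $H$-submodule is the even-weight subspace $V_{\mathrm{ev}}$, whose elements are the permutations fixing every block and flipping an even number of them -- precisely the even permutations fixing all blocks.

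For~(4), assume $|G|=2|H|$. Then~(1) forces the index to be $2^{n/2-1}$, and~(2) provides $\sigma\in G$ with $K=\langle\sigma\rangle$. A direct calculation on cycle types shows that the sign of any $(\tau,\pi)\in\Cyc_2\wr H$ as a permutation of $n$ points equals $(-1)^{|\tau|}$; since $n/2$ is even, $\sigma=(\mathbf{1},e)$ is itself even. Hence the sign homomorphism $\epsilon\colon G\to\{\pm 1\}$ kills $\langle\sigma\rangle$ and descends to a character $\bar\epsilon\colon H\to\{\pm 1\}$. When $H=\Alt_{n/2}$, simplicity of $\Alt_{n/2}$ (valid throughout the intended range $n/2\ge 5$) forces $\bar\epsilon$ to be trivial, and $G\le\Alt_n$ as required.

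The main obstacle will be the case $H=\Sym_{n/2}$, where $\bar\epsilon$ could \emph{a priori} be the sign character of $\Sym_{n/2}$ (corresponding to $G$ being a non-split central extension of $\Sym_{n/2}$ by $\Cyc_2$). To rule this out I would pick transposition lifts $A_i=(\tau^{(i)},(i,i+1))\in G$ for $i=1,\ldots,n/2-1$; the relation $A_i^2\in\langle\sigma\rangle$ forces $\tau^{(i)}_i=\tau^{(i)}_{i+1}$, and then a short expansion of $(A_iA_j)^2$ for $|i-j|\ge 2$ shows that its coordinates in $(\Cyc_2)^{n/2}$ vanish outside the four positions $\{i,i+1,j,j+1\}$. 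Equating this to $\sigma=(\mathbf{1},e)$ would require $\{i,i+1,j,j+1\}=\{1,\ldots,n/2\}$, which is impossible as soon as $n/2\ge 5$. Hence $\bar\epsilon$ must be trivial in the intended range, and~(4) follows.
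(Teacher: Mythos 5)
Parts (a)--(c), your sign formula $\mathrm{sgn}(\tau,\pi)=(-1)^{|\tau|}$ for the imprimitive action on $n$ points, and the reduction of (d) to showing that the induced character $\bar\epsilon$ of $H$ is trivial are all correct; this is a genuinely different route from the paper, which proves (d) by exhibiting an explicit generating set of $G$ consisting of even permutations. The case $H=\Alt_{n/2}$ is also fine. The problem is the decisive case $H=\Sym_{n/2}$: what your computation actually establishes is only that $A_i^2=e$ and $(A_iA_j)^2=e$ (rather than $\sigma$) for $|i-j|\ge 2$, i.e.\ that your transposition lifts are commuting involutions. The inference ``hence $\bar\epsilon$ must be trivial'' is a non sequitur: $\bar\epsilon$ applied to $(i,i+1)$ equals $(-1)^{|\tau^{(i)}|}$, and nothing you have derived controls the parity of $|\tau^{(i)}|$; a family of exactly commuting involutory lifts could a priori consist entirely of odd permutations. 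Relatedly, the parenthetical identification of ``$\bar\epsilon=\mathrm{sgn}$'' with ``$G$ is a non-split central extension'' is incorrect: splitness of $1\to\langle\sigma\rangle\to G\to\Sym_{n/2}\to 1$ and evenness of $G$ are independent conditions, so even lifting all Coxeter relations exactly would not finish the proof.

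The gap is fixable with the relations you already have, but it requires an explicit parity count that genuinely uses $n/2$ even (beyond the evenness of $\sigma$). From $A_i^2=e$ and $(A_iA_j)^2=e$ for all $j$ with $|i-j|\ge 2$ one gets that $\tau^{(i)}$ is constant on $\{i,i+1\}$ and on every pair $\{j,j+1\}$ with $|i-j|\ge 2$; chaining these pairs shows $\tau^{(i)}$ is constant on each of $\{1,\dots,i-1\}$, $\{i,i+1\}$, $\{i+2,\dots,n/2\}$, so $|\tau^{(i)}|=a(i-1)+2b+c(n/2-i-1)$ with $a,b,c\in\{0,1\}$. Choosing $i$ odd with $3\le i\le n/2-3$ (possible since $n/2$ is even and $n/2\ge 7$, hence $n/2\ge 8$), both $i-1$ and $n/2-i-1$ are even, so $|\tau^{(i)}|$ is even, $A_i$ is an even permutation, and since all transpositions are conjugate in $\Sym_{n/2}$ the homomorphism $\bar\epsilon$ is trivial, giving $G\le\Alt_n$. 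Without a count of this kind your argument does not rule out the sign character, so as written the proof of (d) for $H=\Sym_{n/2}$ is incomplete.
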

\begin{proof}
(a), (b) and (c) are immediate consequences of Theorem~\ref{C2Sn/2}, Lemma~\ref{C2An/2lemma} and their proofs.

(d)  In this case $G$ contains the permutation $\alpha$ swapping all pairs of points within the blocks.

Suppose first that $H=\Alt_{n/2}$.
As $n/2$ is even $G$ contains a permutation $\delta$  that fixes exactly one block $B$ and that permutes all the other blocks cyclically.
This permutation is odd if it acts nontrivially in $B$. In any case $\delta^2$ is even, indeed it is written as a product of two $(n/2-1)$-cycles.
Now consider any block $X$ and the set of three blocks $\{X, X\delta^2, B\}$. 
There exists a permutation $\beta$ that permutes these blocks cyclically, fixes another block $Y$ and swaps the remaining blocks pair wisely.
By construction $\beta^4$ is a product of two $3$-cycles permuting the blocks $\{X, X\delta^2, B\}$.
Hence  $\langle \beta^4, \delta^2\rangle$ acts has $\Alt_{n/2}$ on the blocks.
As $|G|=2|\Alt_{n/2}|$, then $G=\langle\beta^4, \delta^2, \alpha\rangle$, hence $G$ is even, as wanted.

Now consider the case $H=\Sym_{n/2}$.
In this case $G$ contains the permutation $\delta$ that permutes all the other blocks cyclically and $\delta^2$ is written as a product of two $n/2$-cycles.
As in the previous case given a triple of blocks $G$ contains a permutation, that is a product of two $3$-cycles, permuting these blocks and fixing all the other points.
With this we get a set of even generators for the group $G$, which shows that $G$ is even.
\end{proof}


\section{Imprimitive string C-groups of rank $r\geq n/2$}\label{Imp}

Consider a string C-group $\Gamma=(G,S)$ of rank $r$, where $G$ is a transitive proper subgroup of $\Sym_n$, which is neither $\Sym_n$ nor $\Alt_n$.
We will consider from now on that $r\geq n/2$. The case where $G$
is primitive is addressed in Proposition~\ref{prim}; therefore, from from now on $G$ is an imprimitive group with $m$ blocks of imprimitivity, each of size $k$.
In what follows let $\mathcal{G}$ be the permutation representation graph of $G$.

As $G$ is imprimitive, we can separate the set of generators $S$ into the following three disjoint sets:
\begin{itemize}
  \item $L$ - the set of generators of $G$ that define an independent generating set for the action of $G$ on the set of $m$ blocks;
  \item $C$ - the set of generators of $G$ which commute with all the elements of $L$; and
  \item $R$ - the remaining set of generators of $G$ that is in neither $L$ nor $C$.
\end{itemize}
In what follows, we provide upper bounds for the sizes of $L$, $C$, and $R$. These results will allow us to conclude that either $k = 2$ or $m = 2$. We treat these cases separately in Sections~\ref{k=2} and~\ref{m=2}, respectively.

The next lemma establishes a bound for $|C|$. We recall its proof here. 
\begin{lemma}\label{boundC}\cite[Section 2]{2016CFLM}.
If $\langle L\rangle$ acts primitively on the blocks, then $|C|\leq k-1$.
\end{lemma}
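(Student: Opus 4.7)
My strategy is to bound $|C|$ by reducing to an independent set in a subgroup of $\Sym_k$ and then applying Whiston's Theorem~\ref{w1}. Concretely, I will map the elements of $C$ to their restrictions on a single block $B_0$ and show that this map is injective with independent image.

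The preparatory step is to show that each $c\in C$ fixes every block setwise. Let $\phi\colon G\to \Sym_m$ denote the projection onto the block action. Since each $c\in C$ commutes with every $\ell\in L$, the image $\phi(c)$ lies in the centraliser $C_{\Sym_m}(\phi(\langle L\rangle))$, which, being the centraliser of a transitive subgroup, is semiregular. As $\phi(\langle L\rangle)$ is primitive, any orbit of this centraliser is $\phi(\langle L\rangle)$-invariant, hence either a single point or all of $\{1,\ldots,m\}$. The second possibility forces the centraliser to be regular, from which one classically deduces that $\phi(\langle L\rangle)$ itself is regular of prime order; combined with $L$ consisting of involutions this reduces to the degenerate case $m=2$, which can be handled separately. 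In the generic case $\phi(c)=1$ for every $c\in C$, so $\langle C\rangle$ lies in the kernel $N$ of $\phi$.

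Fix a block $B_0$ and let $\psi\colon N\to \Sym(B_0)\cong \Sym_k$ denote the restriction homomorphism. I would then show that $\psi$ is injective on $\langle C\rangle$: if $c\in\langle C\rangle$ satisfies $\psi(c)=1$, then for any $\ell\in\langle L\rangle$ and any $x\in B_0$ the commutation $c\ell=\ell c$ gives $c(\ell x)=\ell(cx)=\ell x$, so $c$ fixes $\ell(B_0)$ pointwise; transitivity of $\langle L\rangle$ on the blocks then forces $c=1$. Hence $\psi(C)$ is a set of $|C|$ involutions in $\Sym_k$ generating $\psi(\langle C\rangle)$, and it is independent: a dependency $\psi(c)\in \langle \psi(C)\setminus\{\psi(c)\}\rangle$ would lift, via injectivity of $\psi$ on $\langle C\rangle$, to $c\in\langle C\setminus\{c\}\rangle\subseteq\langle S\setminus\{c\}\rangle$, contradicting the independence of $S$. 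Whiston's Theorem~\ref{w1} applied to $\psi(\langle C\rangle)\leq \Sym_k$, a group of degree $k$, then yields $|C|=|\psi(C)|\leq k-1$.

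The main technical obstacle is the opening step, namely ruling out that some $c\in C$ moves a block; this relies on the classical structure of centralisers of primitive permutation groups in $\Sym_m$. The degenerate case $m=2$ (where the centraliser of $\phi(\langle L\rangle)$ in $\Sym_2$ is nontrivial) would have to be treated by a more hands-on analysis of the possible generators in $G\leq \Sym_k\wr \Sym_2$, for instance by replacing any offending $c\in C$ with $c\ell$ (which lies in $N$) and checking that the resulting restrictions to $B_0$ remain independent.
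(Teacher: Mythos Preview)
Your approach is essentially the same as the paper's: split into the cases $m>2$ and $m=2$, argue in the former that elements of $C$ must fix every block, restrict faithfully to a single block, and apply Whiston's Theorem~\ref{w1}; in the latter, twist each offending $c$ by the unique element of $L$ to land in the block kernel before restricting. The paper's treatment of $m>2$ is phrased as ``if some $c\in C$ moves a block then the $\langle\phi(c)\rangle$-orbits give a nontrivial block system for $\phi(\langle L\rangle)$'', which unpacks to exactly your centraliser argument, and its $m=2$ case is precisely the $c\mapsto c\alpha$ trick you sketch.

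Two small remarks. First, your sentence ``any orbit of this centraliser is $\phi(\langle L\rangle)$-invariant'' is not literally what you want: the orbits are permuted by $\phi(\langle L\rangle)$ and form a block system, not fixed individually; the conclusion you draw (singletons or the whole set) is nevertheless correct for a primitive group. Second, in the $m=2$ case the independence check deserves one line: since $\alpha$ is central in $\langle C\cup L\rangle$, a relation $\bar c_i\in\langle \bar c_j:j\neq i\rangle$ gives $c_i\in\langle C\setminus\{c_i\}\rangle\langle L\rangle\subseteq\langle S\setminus\{c_i\}\rangle$, contradicting independence of $S$. With these clarified your argument is complete and matches the paper's.
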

\begin{proof}
Suppose that all the elements of $C$ fix the blocks.
Since each element of $C$ commutes with all elements of $L$, the permutation representation graph of 
$\langle C\rangle$  consists of $m$ disjoint copies (one per block) of a permutation representation graph of degree 
$k$. Hence $\langle C\rangle$ acts faithfully on a block.
Hence $\langle C\rangle\leq \Sym_k$ and therefore $|C|\leq k-1$, as wanted.
Consider the general case where the elements of $C$ do not necessarily fix the blocks.

Suppose first that $m>2$ and let $f: G \to \Sym_m$ be the homomorphism induced by the action of $G$ on the $m$ blocks. If $\alpha\in C$ is an element that permutes the blocks,  $f(\alpha)$ is a central involution in $\textrm{Im}(f)$. Hence, the orbits of the cyclic group $\langle f(\alpha)\rangle$ form a nontrivial block system for 
$\textrm{Im}(f)$, contradicting the assumption that 
$\textrm{Im}(f)$ is primitive.
Thus for $m>2$, $|C|\leq k-1$.

Now suppose that $m=2$ and let $L=\{\alpha\}$. 
Consider the endomorphism of $G$, defined by the correspondence $\rho\mapsto \bar{\rho}$, where $\bar{\rho}$ is the permutation fixing the blocks defined as follows:
\[
\bar{\rho} = 
\begin{cases}
\rho \alpha & \text{if } \rho \text{ swaps the blocks}, \\
\rho & \text{if } \rho \text{ fixes the blocks}.
\end{cases}
\]
This gives a one-to-one correspondence between $C$ and $\bar{C}=\{\bar{\rho}_i\,|\, \rho_i\in C\}$. 
Now $\langle \bar{C}\rangle \leq \Sym_{k}$. 
Moreover if $\bar{\rho_i}\in \langle \rho_j\,|\, j\neq i\rangle$ then $\rho_i\in \langle C\setminus\{\rho_i\}\rangle \langle L\rangle$, a contradiction.
Thus  $\bar{C}$ is an independent set of permutations in $\Sym_k$, hence $|C|=|\bar{C}|\leq k-1$.
\end{proof}


\begin{lemma}\label{C4}
If $m=4$ and the action of $G$ on blocks is $(C_2)^2$, then $|C|\leq n/4-1$. Moreover we have the following:
\begin{enumerate}
\item If $|C|=n/4-1$ then $\langle L\cup C\rangle\cong \Sym_{n/4}\times (C_2)^2$; and 
\item If $\gamma\in S$ is a central involution fixing the blocks then $|C|\leq n/8$.
\end{enumerate}
\end{lemma}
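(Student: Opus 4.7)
The strategy is to lift each $c\in C$ to a block-fixing involution $\bar c$, inject the resulting set into $\Sym(B_1)\cong\Sym_{n/4}$ by restriction, and then apply Whiston's bound (Theorem~\ref{w1}).

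Since each $c\in C$ commutes with both generators of $L=\{\alpha,\beta\}$, its block-action lies in $C_{\Sym_4}(\langle\bar\alpha,\bar\beta\rangle)=\langle\bar\alpha,\bar\beta\rangle\cong(C_2)^2$. For $c$ with block-action $\sigma\in\{1,\bar\alpha,\bar\beta,\bar\alpha\bar\beta\}$, define $\bar c:=c\tilde\sigma$ via the lifts $\tilde 1=1$, $\tilde{\bar\alpha}=\alpha$, $\tilde{\bar\beta}=\beta$, $\tilde{\bar\alpha\bar\beta}=\alpha\beta$; then $\bar c$ fixes every block. The map $c\mapsto\bar c$ is injective (an equality would give $c_ic_j\in\langle L\rangle$, hence $c_i\in\langle S\setminus\{c_i\}\rangle$, contradicting independence of $S$), and $\bar C$ is independent inside the block-stabilizer $K$ of $\langle L\cup C\rangle$ (any dependence among the $\bar c_i$'s unfolds, via lifts in $\langle L\rangle\subseteq\langle S\setminus\{c_i\}\rangle$, to a dependence in $S$). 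Since $\langle L\rangle$ is transitive on the four blocks, any $k\in\langle\bar C\rangle$ that is trivial on $B_1$ is trivial on every block (conjugation by $\alpha$ and $\beta$ relates the restrictions), so restriction injects $\langle\bar C\rangle$ into $\Sym(B_1)\cong\Sym_{n/4}$. Applying Theorem~\ref{w1} to the independent involution set $\bar C$ then gives $|C|=|\bar C|\le n/4-1$.

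For (a), equality together with the uniqueness clause of Theorem~\ref{w1} forces the restriction $\langle\bar C\rangle|_{B_1}=\Sym(B_1)$, so $\langle\bar C\rangle\cong\Sym_{n/4}$. Each $\bar c$ is centralized by both $\alpha$ and $\beta$ (since $c$ is, and $\tilde\sigma\in\langle L\rangle$), hence $\langle L\rangle$ centralizes $\langle\bar C\rangle$. In particular the commutator $[\alpha,\beta]$ fixes the blocks and so lies in $\langle\bar C\rangle\cap Z(\langle\bar C\rangle)=Z(\Sym_{n/4})=1$; thus $\alpha,\beta$ commute, $\langle L\rangle\cong(C_2)^2$ meets $\langle\bar C\rangle$ trivially (every non-trivial element of $\langle L\rangle$ has non-trivial block action), and $\langle L\cup C\rangle=\langle\bar C\rangle\times\langle L\rangle\cong\Sym_{n/4}\times(C_2)^2$.

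For (b), a central involution $\gamma\in S$ fixing the blocks automatically lies in $C$ with $\bar\gamma=\gamma$, and centrality plus independence of $S$ yield the factorisation $G=\langle\gamma\rangle\times G'$, with $G':=\langle S\setminus\{\gamma\}\rangle$. Since $\gamma$ is central and $\langle L\rangle$ is transitive on the four blocks, the cycle-pattern of $\gamma$ is the same on every block: writing $t$ for the number of transpositions of $\gamma_1:=\gamma|_{B_1}$, one has $1\le t\le n/8$ and $\gamma$ has $4t$ transpositions globally. When $\gamma$ is fixed-point-free on each block ($t=n/8$), $G'$ has two orbits $O$ and $\gamma O$ of size $n/2$ swapped by $\gamma$, and the restriction $G'|_O$ is a transitive imprimitive permutation group of degree $n/2$ with four blocks of size $n/8$ and block-action $(C_2)^2$; applying the main bound to $G'|_O$ (with $n/2$ in place of $n$) gives $|C\setminus\{\gamma\}|\le n/8-1$, hence $|C|\le n/8$.

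The main obstacle is the residual case $t<n/8$: here $\gamma$ has fixed points, so the orbit of any $\gamma$-fixed point under $G$ equals its $G'$-orbit, forcing $G'$ to be transitive on the full $n$ points and removing the half-size orbit that made the previous reduction work. The naive bound obtained by passing instead to $G/\langle\gamma\rangle$ of degree $n-4t$ and the same $(C_2)^2$ block-action gives only $|C|\le n/4-t$, which fails to reach $n/8$ for small $t$. Closing this case will require additional structural input, either by extracting a further central involution from $G'$ and iterating the factorisation until the fixed-point-free situation is reached, or by combining the intersection property with the commuting property to show that a central involution with $t<n/8$ forces extra dependencies inside $\bar C$.
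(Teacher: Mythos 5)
Your bound $|C|\le n/4-1$ follows the same basic device as the paper (undo the block action of each $c\in C$ by an element of $\langle L\rangle$, restrict to one block, apply Theorem~\ref{w1}), but it silently assumes that the two elements of $L$ commute. The step ``each $\bar c$ is centralized by both $\alpha$ and $\beta$ (since $c$ is, and $\tilde\sigma\in\langle L\rangle$)'' only follows when $\tilde\sigma$ is central in $\langle L\rangle$: if $L=\{\rho_i,\rho_{i+1}\}$ consists of consecutive generators, their block actions commute but $\rho_i,\rho_{i+1}$ themselves need not, and then, say, $\bar c=c\alpha$ is not centralized by $\beta$. The same hidden assumption underlies your injectivity-of-restriction argument (``conjugation by $\alpha$ and $\beta$ relates the restrictions'') and makes the deduction of $[\alpha,\beta]=1$ in (a) circular; moreover, $[\alpha,\beta]$ fixing the blocks does not place it inside $\langle\bar C\rangle$ --- the correct statement is that it centralizes $\langle\bar C\rangle$ and fixes each block, hence is trivial because each block-restriction of $\langle\bar C\rangle$ is the full symmetric group. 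The paper closes exactly this hole with a case split: either the block action is generated by a commuting (nonconsecutive) pair of generators, where your argument is sound, or no such commuting pair exists, in which case every element of $C$ must fix the blocks (an element of $C$ with nontrivial block action together with one element of $L$ would form such a pair), so $\tilde\sigma\equiv 1$ and one restricts $C$ itself to a block. Your write-up needs this dichotomy, or some replacement for it, to cover the consecutive non-commuting case.

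Part (b) is incomplete, as you acknowledge, but the real defect is not where you place it. Since $\gamma$ is central in the transitive group $G$, its fixed-point set is $G$-invariant, so $\gamma$ is fixed-point-free and your ``residual case $t<n/8$'' is vacuous. The gap is in the case you do treat: from $G=\langle\gamma\rangle\times G'$ you cannot conclude that $G'$ has two orbits of size $n/2$. The group $G'$ may perfectly well be transitive --- for instance the faithful transitive representation of $\Sym_{n/2}$ on $n$ points of Proposition~\ref{2npointsrep} is centralized by the fixed-point-free involution pairing its fibres --- and then your reduction to degree $n/2$ collapses. The paper argues differently: it adjoins $\gamma$ to $L$, so that $L'=\{\alpha,\beta,\gamma\}$ generates a $(C_2)^3$ block action with blocks of size $n/8$ (each $B_i$ halved along the $\gamma$-orbits), and embeds $C\setminus\{\gamma\}$ into $\Sym_{n/8}$ by the same undo-and-restrict device, giving $|C\setminus\{\gamma\}|\le n/8-1$ with no orbit count for $G'$ at all. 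To rescue your route you would have to prove $G'$ intransitive (you cannot in general) or pass to the action on the $n/2$ orbits of $\gamma$, in which case you must also re-establish independence of the images of $C\setminus\{\gamma\}$ there.
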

\begin{proof}
Let  $L=\{\alpha,\beta\}$. Now either $(\alpha\beta)^2$ is trivial or not. Let us deal with these two cases separately.

(1) Suppose that there exists a set of nonconsecutive generators $L=\{\alpha,\beta\}$ generating the block action.
Now consider the mapping $\rho\mapsto \bar{\rho}$ where $\bar{\rho}$ is a permutation fixing the blocks that is obtained by undoing the block action using elements of $\langle L\rangle$. That is, $\bar{\rho}=\rho\gamma$ with $\gamma\in \langle \alpha,\beta\rangle$.
As $\bar{\rho}$ centralizes $\langle L\rangle$, we conclude that $\langle \bar{\rho}\,|\,\rho\in C\rangle\leq \Sym_{n/4}$ and $\{ \bar{\rho}\,|\,\rho\in C\}$ is independent, thus $|C|\leq n/4-1$.

When $|C|= n/4-1$, $\langle \bar{\rho}\,|\,\rho\in C\rangle\cong \Sym_{n/4}$, hence $\langle L\cup C\rangle\cong \Sym_{n/4}\times (C_2)^2$.

If $\gamma\in S$ is a central involution, particularly $\gamma\in C$, then $G\leq  \Sym_{n/8}\times (C_2)^3$ with $L'=\{\alpha,\beta, \gamma\}$ generating the block action. 
The set $C'$ of the elements commuting with the elements of $L'$ is equal to $C\setminus \{\gamma\}$. As before there is a natural embedding  of $C\setminus\{\gamma\}$ into $\Sym_{n/8}$.
Thus $|C\setminus\{\gamma\}|\leq n/8-1$, as wanted.

(2) Suppose there exist no pair $\{\alpha,\beta\}$ generating the block action with $(\alpha\beta)^2$ being trivial. Thus $L=\{\rho_i,\rho_{i+1}\}$ for some $i$. Then the elements of $C$ must fix the blocks otherwise we are in case (1).
Hence  $|C|\leq n/4-1$. 

Clearly if  $|C|= n/4-1$, then $\langle C\rangle=\Sym_{n/4}$. Finally if $\alpha$ centralizes $\langle C\rangle$, $|C|\leq n/8$, as wanted.
\end{proof}

A block system $\{B_1,\ldots, B_m\}$ is {\bf maximal block system} for $G$ if there is no other block system having a block $X$ ($|X|\neq n$) with $B_1\subseteq X$.

In what follows, we recall the results obtained in the proof of Proposition 2.1 of  \cite{2016CFLM}. 
We observe that in the proof of Proposition~2.1  of  \cite{2016CFLM}, the lower bound on $n$ given by the inequality $n/2 \geq 5$ was used to obtain a classification of the permutation representation graphs, rather than to bound the sets $L$, $R$, and $C$. As bounding these sets is the only focus of the following proposition, no condition on $n$ is required here.

\begin{prop}\label{LCR}\cite{2016CFLM}
Suppose that $L$, $C$ and $R$ are as defined above with respect to a maximal block system. Then these sets of generators satisfy the following properties:
\begin{enumerate}
\item $\langle L\rangle$ has a primitive action on the blocks;
\item $|L|\le m-1$;
\item If $|L|= m-1$ and $m\geq 5$ then the action of $\langle L\rangle$ on the blocks corresponds to the standard Coxeter generators of $\Sym_m$;
\item If the set of labels of $L$ is not an interval then $|L|\leq 2\log_2 m$ and $m\geq 60$, thus $|L|<m/4-3$;
\item If the set of labels of $L$ is not an interval  then $r<n/2$;
\item $|C|\leq k-1$;
\item If the set of labels of the elements of $L$ is an interval then $|R|\leq 2$; and
\item If $m\neq 2$ and the set of labels of the elements of $L$ is an interval, then $r\leq m+k-1$.\\
\end{enumerate}
\end{prop}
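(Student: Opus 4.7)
The plan is to verify each of the eight assertions; as the paper itself notes, the proposition condenses arguments from the proof of Proposition~2.1 of \cite{2016CFLM}, so the task is to indicate the key ingredient for each.

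For (a) I would argue by contradiction: if the block action of $\langle L\rangle$ were imprimitive, the coarser partition it induces would be preserved by $C$ (since $C$ centralizes $\langle L\rangle$, it permutes $\langle L\rangle$-orbits on blocks) and by $R$ (the commuting property forces each $\rho\in R$ to commute with all but at most two elements of $L$, which still suffices to permute the coarser partition), contradicting the maximality of $\{B_1,\ldots,B_m\}$. For (b), project $L$ onto the block action and apply Theorem~\ref{w1}, giving $|L|\leq m-1$. For (c), equality with $m\geq 5$ forces $\langle L\rangle\cong\Sym_m$ by Theorem~\ref{w1}; then Corollary~\ref{CCcoro}, applied to the induced sggi on the $m$ blocks, identifies it with the $(m-1)$-simplex, whose generators are exactly the standard Coxeter generators.

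For (d) the key observation is that (c) already excludes $\langle L\rangle\cong\Sym_m$ with $|L|=m-1$ whenever the labels of $L$ are not consecutive. The remaining possibilities are handled by combining Mar\'oti's $O(\log m)$ bound on independent generating sets of primitive groups distinct from $\Sym_m$ and $\Alt_m$ with Theorem~\ref{maxan} for $\Alt_m$; this gives $|L|\leq 2\log_2 m$, and comparing with $m/4-3$ forces $m\geq 60$. Part (e) follows by combining (d), (f) and the fact that, with $|L|$ small, the positions available for $R$ (clustered near the labels of $L$) are also limited, so that $|L|+|C|+|R|<mk/2=n/2$. Part (f) is exactly Lemma~\ref{boundC}.

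For (g), if the labels of $L$ form an interval $\{i,\ldots,j\}$, then any $\rho\in S$ whose label lies outside $\{i-1,\ldots,j+1\}$ commutes with every element of $L$ by the commuting property, and hence belongs to $C$; so the labels appearing in $R$ are contained in $\{i-1,j+1\}$, yielding $|R|\leq 2$. For (h), I would combine $|L|\leq m-1$, $|C|\leq k-1$ and $|R|\leq 2$ with a case distinction on $|L|$: if $|L|=m-1$ then (c) forces the label interval of $L$ to occupy $\{0,\ldots,m-2\}$ up to duality, leaving at most one adjacent label available for $R$, so $|R|\leq 1$; if $|L|\leq m-2$ then $|L|+|R|\leq m$. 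Either way $r\leq m+k-1$. The main obstacle is part (d): it is the only step requiring nontrivial quantitative input on primitive permutation groups (Mar\'oti's bound together with an inspection of small degrees), and the ensuing bookkeeping that feeds into (e) is the place where a careless case distinction on the label structure of $L$ can break the count.
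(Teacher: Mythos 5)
You should first note that the paper itself offers no proof of Proposition~\ref{LCR}: it is explicitly a summary of results taken from the proof of Proposition~2.1 of \cite{2016CFLM}, so your reconstruction has to stand on its own. Parts (b), (f), (g) are fine ((f) is literally Lemma~\ref{boundC}), and (a) is correct in conclusion but your detour through $C$ and $R$ is both unnecessary and unsound: since $L$ is by definition a generating set for the action of $G$ on the blocks, any $\langle L\rangle$-invariant partition of the blocks is automatically $G$-invariant, and primitivity follows at once from maximality of the block system; by contrast, ``commutes with all but at most two elements of $L$'' does not imply that an element of $R$ preserves an $\langle L\rangle$-invariant partition. In (c), Corollary~\ref{CCcoro} requires degree at least $7$, so $m\in\{5,6\}$ needs separate treatment.

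The genuine gaps are in (d) and (h). In (d) you never use the non-interval hypothesis except to exclude $|L|=m-1$ with block action $\Sym_m$; but $\Sym_m$ or $\Alt_m$ could a priori be the block action with a much smaller independent set whose labels are not an interval, and there is no quotable ``Mar\'oti $O(\log m)$ bound'' for independent generating sets of primitive groups other than $\Sym_m,\Alt_m$ (Mar\'oti bounds the order by roughly $m^{1+\log_2 m}$, which only yields $O((\log_2 m)^2)$), nor is Theorem~\ref{maxan} applicable, since the block image of $L$ need not satisfy the intersection property. The missing idea is structural: if the labels of $L$ are not an interval, $L$ splits into two nonempty elementwise-commuting subsets, so the primitive block group $\Psi$ is a product of two nontrivial commuting subgroups; each is then normal, hence transitive, hence (lying in the centralizer of a transitive group) regular. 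This simultaneously rules out $\Sym_m$ and $\Alt_m$, gives $|\Psi|\le m^2$ and so $|L|\le\log_2|\Psi|\le 2\log_2 m$, and forces a diagonal-type structure with socle $T\times T$, whence $m=|T|\ge 60$. Your reading ``comparing with $m/4-3$ forces $m\ge 60$'' is circular: $m\ge 60$ is an independent structural conclusion, and without it $2\log_2 m<m/4-3$ fails (e.g.\ at $m=59$); part (e), which leans on (d), inherits this gap. In (h), your claim that $|L|=m-1$ together with (c) forces the label interval of $L$ to sit at $\{0,\ldots,m-2\}$ up to duality is unjustified: (c) describes the block action, not the position of the interval inside $\{0,\ldots,r-1\}$, so the configuration $|L|=m-1$, $R=\{\rho_{i-1},\rho_{j+1}\}$ flanking the interval, $|C|=k-1$, which sums to $m+k$, is not excluded by your argument; eliminating it requires a further argument (for instance, showing that when the block action is the Coxeter system of $\Sym_m$ the flanking generators act trivially on the blocks and then bounding the block-fixing generators), which is precisely the content borrowed from \cite{2016CFLM}.
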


\begin{coro}~\label{k=2orm=2}
If $r\geq n/2\geq 7$ then one of the situations occurs: $k=2$, $m=2$. 
\end{coro}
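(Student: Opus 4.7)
The plan is to read off the corollary directly from the numerical bounds collected in Proposition~\ref{LCR}, together with the hypothesis $n \geq 14$.

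First I would argue that the labels of $L$ must form an interval. Indeed, since we assume $r \geq n/2$, part (e) of Proposition~\ref{LCR} forbids the non-interval case: if the labels of $L$ were not an interval, we would have $r < n/2$, a contradiction. So from this point on we may use part (h), which gives, whenever $m \neq 2$, the bound
\[
r \leq m + k - 1.
\]

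Next I would combine this with $r \geq n/2 = mk/2$ (assuming $m \neq 2$, since $m = 2$ is already one of the allowed conclusions). Multiplying through by $2$ and rearranging,
\[
mk \leq 2m + 2k - 2 \quad\Longleftrightarrow\quad (m-2)(k-2) \leq 2.
\]
A short case analysis on this inequality, under $m, k \geq 2$, enumerates the possibilities: either $m = 2$, or $k = 2$, or $(m,k)$ is one of $(3,3)$, $(3,4)$, $(4,3)$, giving $n \in \{9, 12\}$.

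The hypothesis $n/2 \geq 7$, i.e.\ $n \geq 14$, eliminates the three exceptional pairs, so the only possibilities remaining are $m = 2$ or $k = 2$, as claimed. No real obstacle arises here; the whole argument reduces to Proposition~\ref{LCR}(e),(h) and the elementary inequality $(m-2)(k-2) \leq 2$, with the lower bound on $n$ killing off the small sporadic solutions.
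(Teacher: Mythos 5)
Your proposal is correct and follows essentially the same route as the paper: invoke Proposition~\ref{LCR}(e) to force the labels of $L$ to be an interval (since $r\geq n/2$), then apply part (h) to get $r\leq m+k-1$, and deduce $(m-2)(k-2)\leq 2$, which $n\geq 14$ rules out when $k,m\neq 2$. Your write-up is in fact slightly more explicit than the paper's (which compresses the interval step into ``By Proposition~\ref{LCR}'' and misstates ``holds'' where it means the inequality fails for $n\geq 14$), but there is no substantive difference.
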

\begin{proof}
Suppose that $k,\,m\neq 2$. 
By Proposition~\ref{LCR} (e) we may assume that the set of labels of the elements of $L$ is an interval. Then by Proposition~\ref{LCR} (h), $r\leq m+k-1$. Thus $\frac{mk}{2}\leq m+k-1$, which is equivalent to $(m-2)(k-2)\leq 2$.
The latter inequality holds for $n\geq 14$. 
\end{proof}

In most cases, we impose a lower bound \( n \geq 14 \) to ensure the analysis remains within the two main cases under consideration ( \( k = 2 \) and  \( m = 2 \)). However, this bound is not always required. Whenever it is necessary, we will state it explicitly.

\section{Case: Imprimitive groups with blocks of size $2$.}\label{k=2}

In this case we consider that $G$ has a maximal block system with blocks of size two, which implies that the action of $G$ on the blocks is primitive.
Consider the sets  $S$, $L$, $C$ and $R$ as in the previous section. Let us assume that $r=|S|\geq n/2$.
Then by Proposition~\ref{LCR} the set of labels of $L$ is an interval  and as $m\neq 2$, $|C|\leq 1$. Furthermore, if $C$ is nonempty then the element of $C$ is the permutation swapping the two points in every block.

In what follows let $\Psi$ be the sggi corresponding to the action of $L$ on the blocks.
As $\Psi$ is primitive, by  Lemma~\ref{maroti2}, one of the following situations must  occur:
$|L|\leq n/2-3$,  $\Psi$ is on the the groups of Table~\ref{primsggi} or  $\Psi\in\{\Sym_{n/2},\Alt_{n/2}\}$. In the following proposition we rule out the groups of Table~\ref{primsggi}.

\begin{prop}\label{phiprim}
If $r\geq n/2$ then $\Psi$ cannot be isomorphic to one of the groups $\Dy_{10}$, $\PGL_2(5)$ or $\PSL_2(5)$.
\end{prop}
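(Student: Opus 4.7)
The plan is to combine a straightforward lower bound on $|L|$ with the (very short) lists of faithful primitive actions available to the three candidate groups, and then to invoke the chain-length bound on independent generating sets.

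First I would note that by Proposition~\ref{LCR}(a), $\Psi$ acts primitively on the set of $m=n/2$ blocks, and the standing hypothesis in this section (inherited from Corollary~\ref{k=2orm=2}) is $n/2\geq 7$, so $m\geq 7$. Next, I would record the lower bound on $|L|$: since $r=|L|+|C|+|R|$, with $|C|\leq 1$ by the opening paragraph of Section~\ref{k=2} and $|R|\leq 2$ by Proposition~\ref{LCR}(g), the assumption $r\geq n/2=m$ immediately gives $|L|\geq m-3$.

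Then I would dispatch the three candidates in turn. For $\Dy_{10}$, the only faithful primitive action is the one on $5$ points, which is incompatible with $m\geq 7$. For $\PSL_2(5)\cong \Alt_5$ and for $\PGL_2(5)\cong \Sym_5$, the faithful primitive actions correspond to the core-free maximal subgroups, namely $\Alt_4, \Dy_{10}, \Sym_3$ in $\Alt_5$ (of indices $5$, $6$, $10$) and $\Sym_4, \AGL_1(5), \Sym_3\times \Sym_2$ in $\Sym_5$ (of indices $5$, $6$, $10$); in both cases the possible degrees are only $5$, $6$, or $10$. The degrees $5$ and $6$ are ruled out by $m\geq 7$, leaving only $m=10$. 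But $m=10$ forces $|L|\geq m-3=7$, while $|L|$ is an independent generating set for $\Psi$, so $|L|\leq \lcs(\Psi)$; since $\lcs(\Alt_5)=4$ and $\lcs(\Sym_5)=5$, this is a contradiction.

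The only real piece of work is the enumeration of the faithful primitive actions of $\Alt_5$ and $\Sym_5$: the $10$-point actions are easy to forget and are essential because they are the only ones consistent with $m\geq 7$. Once one knows to check those and notices that they push $|L|$ past the chain-length bound, the argument is complete; no sggi-specific machinery is needed beyond the inequality $|L|\geq m-3$.
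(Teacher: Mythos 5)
Your counting is fine as far as it goes, but the step where you import the hypothesis $n/2\geq 7$ is the whole problem: that inequality is not part of Proposition~\ref{phiprim}, and it is not a standing assumption of Section~\ref{k=2}, whose only blanket assumption is $r=|S|\geq n/2$ (the authors write ``$\geq 7$'' explicitly in the later statements that need it, e.g.\ Corollary~\ref{k=2RC2} and Propositions~\ref{P} and~\ref{S}, and omit it here). The proposition is invoked immediately after Lemma~\ref{maroti2}, whose exceptional groups $\Dy_{10}$, $\PSL_2(5)$, $\PGL_2(5)$ occur precisely as primitive groups of degree $5$ and $6$; so the substantive content of Proposition~\ref{phiprim} is exactly the cases $n/2=5$ (i.e.\ $n=10$, $\Psi\cong\Dy_{10}$) and $n/2=6$ (i.e.\ $n=12$, $\Psi\cong\PSL_2(5)$ or $\PGL_2(5)$), which arise for instance in the branch ``$n/2\leq 8$'' of the proof of Proposition~\ref{RC3a} and in Proposition~\ref{LC=2}, neither of which carries a lower bound on $n/2$. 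By discarding degrees $5$ and $6$ with ``$m\geq 7$'' you assume away precisely the cases the proposition exists to handle, so the argument begs the question.

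For those two degrees no counting argument suffices: with $n/2=6$ one can have $|L|=4$, $|C|\leq 1$, $|R|\leq 2$ and still reach $r\geq 6$. The paper therefore argues on the group $G$ itself: it lists the transitive subgroups of $\Cyc_2\wr\Psi$ on $n\in\{10,12\}$ points whose block action is $\Psi$, and uses independence of the generators --- which forces $G_i\neq G_j$ for $i\neq j$, while the candidate subgroups either have a unique transitive representation of degree $n$ inside $\mathcal{G}$ (forcing $G_i=G_j$) or would put a central involution into $G_{i,j}\cong\Psi$ --- to reach a contradiction separately for $\PGL_2(5)$, $\PSL_2(5)$ and $\Dy_{10}$. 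On the positive side, your bound $|L|\geq n/2-3$ is correct, and your treatment of the degree-$10$ primitive actions of $\Alt_5$ and $\Sym_5$ via the subgroup-chain bound is a legitimate point that the paper's proof passes over silently (it implicitly takes the degree of $\Psi$ to be $5$ or $6$); but this does not repair the missing cases $n=10,12$, which are the ones the paper actually needs.
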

\begin{proof}
Suppose that $\Psi$ is one of the groups  excluded in this proposition.
By Theorem~\ref{w1}  $|L|\leq n/2-2$. In these cases $|R\cup C|=3$. Up to duality we may assume that $C=\{\rho_0\}$ and $R=\{\rho_1,\rho_{r-1}\}$.  
Notice that  $G_{i,j}<G_i<G$, $G_{i,j}<G_j<G$ and $G_i\neq G_j$.  However $G_i$ and $G_j$ might be isomorphic, since  the corresponding permutation representations, can give distinct subgraphs of $\mathcal{G}$.

Let us consider each group, $\PGL_2(5)$, $\PSL_2(5)$ and $\Dy_{10}$, separately.

(1) $\Psi\cong \PGL_2(5)$:
The following diagram gives the subgroups of $\Cyc_2\wr \PGL_2(5)$ which have $\Psi$ as the block action.
\begin{center}
\begin{small}
\begin{tikzcd}
    & \Cyc_2\wr \PGL_2(5) \arrow[ld] \arrow[rd] & \\
\Cyc_2^5.\PGL_2(5) \arrow[d] &   & (\Cyc_2\wr \PGL_2(5))^+ \arrow[d]    \\
\Alt_5:\Cyc_4  &   & \Cyc_2\times\PGL_2(5) \arrow[d] \\
    &      & \PGL_2(5)                      
\end{tikzcd}
\end{small}
\end{center}
Suppose first that $|R\cup C|=3$.
As $\PGL_2(5)$ is almost simple and  $G_{1,r-1} = \langle L\rangle \times\langle C\rangle$,  $G_{1,r-1} \ncong \PGL_2(5)$. 
Hence $G_{1,r-1}$ is either $\Alt_5:\Cyc_4$ or $\Cyc_2\times\PGL_2(5)$. 
Then $G_1\cong G_{r-1}\cong \Cyc_2^5.\PGL_2(5)$ or $G_1\cong G_{r-1}\cong (\Cyc_2\wr \PGL_2(5))^+$. But these groups have a unique transitive permutation representation on $12$ points that is represented as a subgraph of $\mathcal{G}$,  which gives $G_1=G_{r-1}$, a contradiction.
Thus $|R\cup C|=2$.

 Now let $R\cup C=\{\rho_i,\rho_j\}$. To avoid the previous contradiction we must have $G_{i,j}\cong \Psi\cong  \PGL_2(5)$ which might be transitive or intransitive (this is the only group of the diagram above that might be intransitive). 
But then either $G_i\cong G_j\cong  \Cyc_2\times\PGL_2(5)$ or $G_i\cong G_j\cong(\Cyc_2\wr \PGL_2(5))^+$. 
In the first case there is a central involution $\delta\in G_i\cap G_j=G_{i,j}\cong \PGL_2(5)$, a contradiction.
In the second case implies $G_i=G_j$, a contradiction.

(2) $\Psi\cong \PSL_2(5)$: Consider the case where $|R\cup C|=3$.
The transitive subgroups of $\Cyc_2\wr \PSL_2(5)$ having block action $\Psi$ are the following:
$$\PSL_2(5)< \Cyc_2\times \PSL_2(5)<(\Cyc_2\wr \PSL_2(5))^+<\Cyc_2\wr \PSL_2(5)$$
In this case we must have  $G_{1,r-1}\cong \Cyc_2\times \PSL_2(5)$ and $G_1\cong G_{r-1}$.
But this gives  $G_1=G_{r-1}$, a contradiction.
Then $|R\cup C|=2$. In this case, let $R = \{\rho_i\}$ and $C=\{\rho_j\}$. This implies that $G_{i}\cong \Cyc_2\times \PSL_2(5)$ and $G_j$ cannot be isomorphic to  $\Cyc_2\times \PSL_2(5)$. Hence either $G_j$ is a proper subgroup of $G_i$ or $G_i$ is a proper subgroup of $G_j$, a contradiction. 

 (3) $\Psi\cong \Dy_{10}$: In this case $|R\cup C|=3$.
The transitive subgroups of $\Cyc_2\wr \Dy_{10}$ having block action $\Psi$ are the following:
$$\Dy_{10};\, \Cyc_2\times \Dy_{10};\,(\Cyc_2\wr \Dy_{10})^+;\,\Cyc_2\wr \Dy_{10}$$
Then $G_1\cong \Cyc_2\times \Dy_{10}$. If $G_0\cong G_{r-1}\cong (\Cyc_2\wr \Dy_{10})^+$ then $G_0=G_{r-1}$, a contradiction.  
\end{proof}

By Proposition~\ref{LCR}, we have \( |R \cup C| \leq 3 \). We now show that if equality holds, then the group action on the blocks contains the alternating group.

\begin{prop}\label{RC3a}
Let $r\geq n/2$.
If $|R\cup C|=3$ then $\Psi$ is isomorphic to $\Sym_{n/2}$ or $\Alt_{n/2}$.
\end{prop}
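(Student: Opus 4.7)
The plan is to apply Lemma~\ref{maroti2} to the sggi $\Psi$ induced on the blocks, and then use Proposition~\ref{phiprim} to eliminate the three exceptional small primitive groups. The key observation is that the hypothesis $|R \cup C| = 3$ forces $L$ to be very large, so the primitivity coming from Proposition~\ref{LCR}(a) combined with Lemma~\ref{maroti2} leaves almost no room for $\Psi$.

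First, since $S = L \,\dot\cup\, C \,\dot\cup\, R$ is a disjoint union, $|L| = r - 3 \geq n/2 - 3$. By Proposition~\ref{LCR}(a) the action of $\langle L\rangle$ on the $n/2$ blocks is primitive, so $\Psi$ is a primitive permutation group of degree $n/2$. The projections to the block action of the elements of $L$ are involutions (since each $\rho_i$ is an involution and the block projection is a group homomorphism), they are nontrivial and independent by the very definition of $L$ as an independent generating set for the block action, and they inherit the commuting property from $S$. Consequently $\Psi$ is a sggi of degree $n/2$ whose generating set is independent and has cardinality at least $n/2 - 3$.

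Next, I would invoke Lemma~\ref{maroti2} with $n$ replaced by $n/2$: since $|L| \geq n/2 - 3$ and $\Psi$ is primitive, the lemma forces $\Psi$ to be either $\Sym_{n/2}$, $\Alt_{n/2}$, or one of the three exceptional groups $\Dy_{10}$, $\PSL_2(5)$, $\PGL_2(5)$. Finally, the hypothesis $r \geq n/2$ is exactly the hypothesis of Proposition~\ref{phiprim}, which rules out the three exceptional isomorphism types for $\Psi$. Hence $\Psi \cong \Sym_{n/2}$ or $\Psi \cong \Alt_{n/2}$, as claimed.

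The only subtle point, and in my view the main thing to check carefully, is the verification in the first paragraph that the block-action projections of the elements of $L$ really form an independent sggi generating set — that is, that independence, the involutory property, and the commuting property all descend from $(G, S)$ to $\Psi$. Everything else in the proof is a direct invocation of the two earlier results and of Proposition~\ref{LCR}(a), so the argument is essentially a bookkeeping exercise once Lemma~\ref{maroti2} and Proposition~\ref{phiprim} are in hand.
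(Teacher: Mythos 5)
Your proof is correct, but it takes a genuinely different route from the paper's in the main range of degrees. The paper argues in two cases: for $n/2\geq 9$ it shows that every $\Psi_i$ is intransitive (separate arguments for $\Psi_2,\Psi_{r-2}$ using the structure of $C$ and $R$, plus Lemma~\ref{X} to exclude an imprimitive $\Psi_i$) and then applies Proposition~\ref{sggiint}, which yields the stronger conclusion $\Psi\cong\Sym_{n/2}$ in that range; only for $n/2\leq 8$ does it use the argument you propose, namely Lemma~\ref{maroti2} together with Proposition~\ref{phiprim}. Your point is that this second argument is not actually confined to small degree: $|L|=r-3\geq n/2-3$, the block action is primitive because the size-$2$ block system is maximal (Proposition~\ref{LCR}(a)), and the block projections of $L$ form an independent sggi generating set of $\Psi$ — independence is built into the definition of $L$, nontriviality of the projections follows from independence, and the commuting property descends since generators nonconsecutive in the induced ordering have nonconsecutive labels in $S$. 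With these hypotheses Lemma~\ref{maroti2} applies at every degree, and Proposition~\ref{phiprim} (whose hypothesis is exactly $r\geq n/2$ and whose proof does not rely on the present proposition, so there is no circularity) eliminates $\Dy_{10}$, $\PSL_2(5)$ and $\PGL_2(5)$; for $n/2\geq 7$ these are in any case impossible on degree grounds. This uniform use of Lemma~\ref{maroti2} is exactly how the paper itself treats the case $|R\cup C|=2$ in Proposition~\ref{LC=2}, so it is methodologically consistent. What your route gives up is only the sharper statement $\Psi\cong\Sym_{n/2}$ for $n/2\geq 9$, which the proposition does not claim and which is not used later (Proposition~\ref{RC<3} handles the alternating case as well); what it buys is a shorter, case-free proof, with the one genuinely delicate point being the descent of the sggi/independence structure to $\Psi$, which you identify and verify correctly.
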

\begin{proof}
Up to duality we may assume that $C=\{\rho_0\}$, $R=\{\rho_1,\rho_{r-1}\}$ and $L=\{\rho_2,\ldots,\rho_{r-2}\}$.
Let  $\alpha_i$ be the action of $\rho_i$ on the blocks. Then $\Psi=\langle \alpha_2,\ldots,\alpha_{r-2}\rangle$.
Here we consider the following notation  $\Psi_i:=\langle \alpha_j\,|\, j\neq i\rangle$.

Let us prove that $\Psi_i$ is intransitive for every $i\in\{2,\ldots,r-2\}$ when $n/2\geq 9$.

(1) $\Psi_2$ and $\Psi_{r-2}$ are intransitive:
Suppose that $\Psi_2$ is transitive.
If $\rho_1$ swaps a pair of points inside a block, then, as $\rho_1$ centralizes $G_{>2}$, $\rho_1=\rho_0$, a contradiction.
Thus  $\rho_1$ swaps a pair of blocks. Then, the transitivity of  $G_{>2}$, forces $\rho_1$ to swap all blocks pair wisely.
Moreover $\rho_0\rho_1$ is also a permutation swapping all blocks pair wisely and $\rho_0\rho_1\in \langle L\rangle$, a contradiction.
Therefore $\Psi_2$ is intransitive and by duality $\Psi_{r-2}$ is also intransitive.

(2) $\Psi_i$ is intransitive for $i\in\{3,\ldots,r-3\}$:
Suppose that $\Psi_i\,(=\Psi_{\{2,\ldots,i-1\}}\times \Psi_{\{i+1,\ldots,r-2\}})$ is transitive.
As $\Psi_{\{2,\ldots,i-1\}}\leq \Psi_{r-2}$ and $\Psi_{\{i+1,\ldots,r-2\}}\leq \Psi_2$, these groups  of the decomposition of $\Psi_i$ are intransitive, by (1). 
Hence $\Psi_i$ is imprimitive.
Then, by Lemma~\ref{X}, $r-4=|L|-1\leq (k'-1)+(m'-1)$. As  $|L|\geq n/2-3$, it follows that $n/2-4\leq k'+m'-2$, which is only possible if $(k'-1)(m'-1)\leq 3$, which is never the case as  $n/2=k'm'\geq 9$.
This proves that  for $i\in\{2,\dots, r-2\}$, $\Psi_i$ is intransitive.
Now if $n/2\geq 2. 3 +3\time 3=9$, by Proposition~\ref{sggiint}, $\Psi\cong \Sym_{n/2}$, as wanted.

Now suppose that $n/2\leq 8$. As $|L|\geq n/2-3$, by Lemma~\ref{maroti2} and Proposition~\ref{phiprim}  $\Psi$ is either isomorphic to  $\Sym_{n/2}$ or to $\Alt_{n/2}$.
\end{proof}

However, as we show in the following proposition, the case \( \Psi \in \{ \Sym_{n/2}, \Alt_{n/2} \} \) also leads to a contradiction. Consequently, we obtain an improved upper bound on the size of \( R \cup C \), and a corresponding lower bound on the size of \( L \).

\begin{prop}\label{RC<3}
If  $r\geq n/2$ then $|R\cup C|< 3$ and $|L|\geq n/2-2$.
\end{prop}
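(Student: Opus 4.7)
The second assertion $|L| \geq n/2 - 2$ follows from the first, since $|L| = r - |R \cup C| \geq r - 2 \geq n/2 - 2$ as soon as $|R \cup C| \leq 2$. So I focus on proving $|R \cup C| < 3$. Suppose for contradiction that $|R \cup C| = 3$. Proposition~\ref{LCR} gives $|C| \leq k - 1 = 1$ and $|R| \leq 2$, forcing $|C| = 1$ and $|R| = 2$. Up to duality, write $C = \{\rho_0\}$, $R = \{\rho_1, \rho_{r-1}\}$, $L = \{\rho_2, \ldots, \rho_{r-2}\}$; here $\rho_0$ is the central all-pair-swap involution of $\Cyc_2 \wr \Sym_{n/2}$ (the unique nontrivial $\langle L\rangle$-fixed vector in the kernel module, since $\Psi$ is transitive on the $n/2$ block-coordinates). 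By Proposition~\ref{RC3a}, $\Psi \in \{\Sym_{n/2}, \Alt_{n/2}\}$. Since $\rho_0$ is central in $G$ but not in $G_0 := \langle\rho_1, \ldots, \rho_{r-1}\rangle$ (by independence of $S$), we have $G = G_0 \times \langle\rho_0\rangle$, and $\Gamma_0$ is a string C-group of rank $r - 1 \geq n/2 - 1$ whose block action is still $\Psi$.

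If $G_0$ is intransitive on $\{1, \ldots, n\}$, its orbits are two size-$n/2$ transversals exchanged by $\rho_0$, and $G_0$ acts faithfully on each orbit as $\Psi$; hence $G_0 \cong \Psi$. For $\Psi = \Alt_{n/2}$, Theorem~\ref{maxan} bounds the rank strictly below $n/2 - 1$ for every $n/2 \geq 7$, a contradiction. For $\Psi = \Sym_{n/2}$, the rank-$(n/2-1)$ string C-group structure must be the $(n/2-1)$-simplex by Corollary~\ref{CCcoro}, so $\rho_1, \ldots, \rho_{r-1}$ are standard Coxeter involutions on the orbit and $L = \{\rho_2, \ldots, \rho_{r-2}\}$ consists of the interior transpositions; but their block action then generates only $\Sym_{n/2-2}$ (fixing two blocks), contradicting $\langle L\rangle_{\text{block}} = \Psi = \Sym_{n/2}$.

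If $G_0$ is transitive, it is itself an imprimitive transitive string C-group of degree $n$, so Proposition~\ref{maximprim} applied to $G_0$ gives $r - 1 \leq n/2 + 1$; equality would force $G_0 \cong \Cyc_2 \wr \Sym_{n/2}$, making $|G| = 2|G_0|$ exceed $|\Cyc_2 \wr \Sym_{n/2}|$, a contradiction. Hence $r \in \{n/2, n/2+1\}$. For $r = n/2+1$, Proposition~\ref{maximprim} applied to $G$ forces $G \cong \Cyc_2 \wr \Sym_{n/2}$ with $n \equiv 2 \pmod 4$; the index-$2$ subgroup $G_0$ then contains the even-weight kernel $(\Cyc_2)^{n/2-1}$ by Theorem~\ref{indexes}(c), and comparison with the known rank-$(n/2+1)$ permutation representation of $\Cyc_2 \wr \Sym_{n/2}$ from \cite{2016CFLM} (in which $|R \cup C| = 1$) produces a contradiction. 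For $r = n/2$, Corollary~\ref{C2Sn/2} restricts $G_0 \cap k$ to $\{1, (\Cyc_2)^{n/2-1}\}$ (the cases $\Cyc_2$ and $(\Cyc_2)^{n/2}$ are excluded by $\rho_0 \notin G_0$): if $G_0 \cap k = 1$ then $G_0 \cong \Psi$ acts faithfully on $n$ points as in Proposition~\ref{2npointsrep} and the interior-Coxeter argument from the intransitive case again applies; if $G_0 \cap k = (\Cyc_2)^{n/2-1}$ (which forces $n/2$ odd), the commuting-property constraint that $\rho_1$ and $\rho_{r-1}$ each centralise $\langle\alpha_3, \ldots, \alpha_{r-2}\rangle$ in $\Psi$, combined with Proposition~\ref{sggiint} (with $x = 3$ for $n/2 \geq 9$, and $n/2 \in \{7, 8\}$ handled directly), leaves no consistent placement for $\rho_1, \rho_{r-1}$, yielding the contradiction. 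This last sub-case is the main obstacle, since the rank bounds are tight and one must invoke the rigidity of large independent sggi generating sets of $\Sym_{n/2}$ together with centralizer constraints in $\Psi$ to conclude.
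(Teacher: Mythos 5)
Your reductions at the start are fine (deducing $|L|\geq n/2-2$, forcing $|C|=1$, $|R|=2$ with $\rho_0$ the central block-swap, and $G=G_0\times\langle\rho_0\rangle$), and the intransitive-$G_0$ branch is essentially complete, up to the small point that when $\Psi\cong\Alt_{n/2}$ the faithful action of $G_0$ on a transversal could realise $\Sym_{n/2}$ rather than $\Psi$, which simply folds into your symmetric case. The problem is the transitive-$G_0$ branch, which is asserted rather than proved. The sub-case $r=n/2+1$ is dismissed by ``comparison with the known rank-$(n/2+1)$ permutation representation \ldots produces a contradiction'' with no actual argument, and the decisive sub-case $r=n/2$ with ${\rm Ker}(f|_{G_0})\cong(\Cyc_2)^{n/2-1}$ --- which is precisely the configuration $G\cong\Cyc_2\wr\Sym_{n/2}$ with $G_0$ an index-$2$ subgroup containing the even-weight base group --- is explicitly left open (``leaves no consistent placement for $\rho_1,\rho_{r-1}$'', ``the main obstacle''). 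The centraliser constraint you invoke does not by itself exclude anything: the block image of $\rho_1$ (or $\rho_{r-1}$) may well be trivial or a transposition commuting with the images of $\rho_3,\ldots,\rho_{r-2}$, and Proposition~\ref{sggiint} cannot be applied to the images $\alpha_j$ without first proving the relevant intransitivity hypotheses. So this is a genuine gap in the heart of the proof, not a routine verification, and the second assertion of the proposition is only as good as this missing step.

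For comparison, the paper avoids your case division on the transitivity of $G_0$ and on the kernel altogether. It first shows (Proposition~\ref{RC3a}) that $\Psi\in\{\Sym_{n/2},\Alt_{n/2}\}$, then observes that $G_{0,1}=\langle L\cup\{\rho_{r-1}\}\rangle$ is already transitive, and applies Theorem~\ref{indexes} to the transitive subgroups $G_{0,1}\leq G_0,G_1\leq G$ of $\Cyc_2\wr H$: the very short list of admissible indices, combined with $|G_1|=2|G_{0,1}|$, $|G|=2|G_0|$ and the strictness of these inclusions coming from independence and the intersection property, forces a parabolic subgroup of order $2\,|H|$, which by Theorem~\ref{indexes}(b) must contain the central swap $\rho_0$ --- contradicting independence. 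To repair your proof you would need an analogous counting or structural argument (for instance, applying Theorem~\ref{indexes} not only to $G_0$ but also to $G_{0,1}$ and $G_1$ simultaneously) in the two transitive sub-cases you left open.
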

\begin{proof}
Suppose that $|R\cup C|=3$. 
Then, by Proposition~\ref{RC3a}, $\Psi$ is isomorphic to $\Sym_{n/2}$ or $\Alt_{n/2}$.
First consider the case $\Psi\cong \Sym_{n/2}$. 
Then $C=\{\rho_0\}$ and $R=\{\rho_1,\rho_{r-1}\}$. 
As $L$ is a minimal  set of generators generating the block action, and $G$ is transitive, $G_{0,1}=\langle L\cup\{\rho_{r-1}\}\rangle$ is transitive.
Hence the group $G_{0,1}$ is a transitive subgroup of $\Cyc_2\wr \Sym_{n/2}$.
Then, by Theorem~\ref{indexes}, $G_0$ and $G_1$ must be index $2$ subgroups of $\Cyc_2\wr \Sym_{n/2}$, while the order of $G_{0,1}$ is twice that of $\Sym_{n/2}$.
Moreover, $G_{0,1}$ contains $\rho_0$, the permutation swapping all points within the blocks, a contradiction.
 The same argument can be applied when $\Psi$ is isomorphic to $\Alt_{n/2}$.
 
Now as $|R\cup C|\leq 2$, it follows that $|L|= r-|R\cup C|\geq n/2-2$.
\end{proof}

In what follows, we consider the cases \( |R \cup C| = 2 \) and \( |R \cup C| = 1 \). Proposition~\ref{LC=2} and Corollary~\ref{k=2RC2} address the case \( |R \cup C| = 2 \).

\begin{prop}\label{LC=2}
 Let $r\geq n/2$. If $|R\cup C|=2$ then  $\langle L\rangle\cong \Sym_{n/2}$, $|R|=|C|=1$, $n/2$ is odd and $G\cong \Cyc_2\wr \Sym_{n/2}$.
\end{prop}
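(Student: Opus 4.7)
The plan is to first reduce $\Psi=\langle L\rangle$ to $\Sym_{n/2}$ or $\Alt_{n/2}$, then analyse $C$ and $R$ via the $\mathbb{F}_2[\Psi]$-module structure of the kernel $K=G\cap(\Cyc_2)^{n/2}$ of the block-action map, and finally invoke Proposition~\ref{maximprim} to package the conclusion. Because $|R\cup C|=2$ and $r\geq n/2$, one has $|L|\geq n/2-2$; by Proposition~\ref{LCR}(a), the pair $(\Psi,L|_{\text{blocks}})$ is an independent primitive sggi on $m=n/2$ points of rank at least $m-3$, so Lemma~\ref{maroti2} limits $\Psi$ to $\Sym_m$, $\Alt_m$, $\Dy_{10}$, $\PSL_2(5)$ or $\PGL_2(5)$, and Proposition~\ref{phiprim} excludes the three small groups. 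Since $m>2$, the argument of Lemma~\ref{boundC} shows every $\rho\in C$ fixes each block, hence is a non-zero $\Psi$-invariant vector of $K$; by Corollary~\ref{C2Sn/2} it must equal the all-ones vector $\mathbf{1}$ (the permutation swapping every pair inside a block). Hence $|C|\leq 1$, and $C=\{\mathbf{1}\}$ if non-empty.

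Next, Proposition~\ref{LCR}(e) forces the labels of $L$ into an interval, so up to duality $L$ occupies either $\{1,\ldots,r-2\}$ or $\{0,\ldots,r-3\}$. In the latter case the non-$L$ generator $\rho_{r-1}$ is at label-distance $\geq 2$ from every element of $L$, so it commutes with $L$ and hence equals $\mathbf{1}$, giving $|R|=|C|=1$ immediately. In the former case both non-$L$ generators are adjacent to $L$, and for each such $\rho$ adjacent to the $L$-element $\rho_\ell$ I would compute the centralizer in $\Psi$ of $\langle L\setminus\{\rho_\ell\}\rangle|_{\text{blocks}}$, which pins down the block-image of $\rho$ to a short list of options, and then use Corollary~\ref{C2Sn/2} (together with Lemma~\ref{C2An/2lemma} when $\Psi=\Alt_m$) to restrict its $(\Cyc_2)^m$-component. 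The key observation is that the $\Psi$-submodule of $K$ generated by $\rho$ under conjugation by $L$ is $\Psi$-invariant and hence one of $\{0\}$, $\{0,\mathbf{1}\}$, the even-weight submodule $E$, or all of $(\Cyc_2)^m$. A case-by-case check then shows that in every configuration with $|R|=2$ and $|C|=0$, or with $|L|=n/2-2$, or with $\Psi=\Alt_m$, this submodule is already contained in $\langle L\rangle$, producing a redundancy among the generators that contradicts the independence of $S$ (which is automatic from the intersection property of the string C-group).

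The only surviving possibility is therefore $|R|=|C|=1$, $|L|=n/2-1$, $\Psi\cong\Sym_{n/2}$ and $r=n/2+1$; Proposition~\ref{maximprim} then yields $G\cong\Cyc_2\wr\Sym_{n/2}$ together with $n/2$ odd, as required. I expect the main obstacle to lie in the case analysis above, especially when $|L|=n/2-2$ and $\Psi\cong\Sym_m$: in that regime the centralizer of $\langle L\setminus\{\rho_\ell\}\rangle$ in $\Sym_m$ is non-trivial (because the only rank-$(m-2)$ string C-group of $\Sym_m$ is the one of Proposition~\ref{highC}, which allows adjacent non-$L$ generators to permute blocks non-trivially), so each candidate for $\rho$ must be ruled out separately via an independence argument.
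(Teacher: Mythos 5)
Your opening reduction (the block action of $\langle L\rangle$ is $\Sym_{n/2}$ or $\Alt_{n/2}$ via Lemma~\ref{maroti2} and Proposition~\ref{phiprim}, and any element of $C$ fixes the blocks and must be the central involution) agrees with the paper. The fatal step is your case analysis: you claim that \emph{every} configuration with $|L|=n/2-2$ produces a redundancy contradicting independence, so that only $|L|=n/2-1$, $r=n/2+1$ survives, and you then read off ``$G\cong\Cyc_2\wr\Sym_{n/2}$ and $n/2$ odd'' from Proposition~\ref{maximprim}. This is false, and it would eliminate precisely the objects the paper is classifying: for $n/2$ odd, taking $\langle L\rangle$ to be a rank-$(n/2-2)$ representation of $\Sym_{n/2}$ (Proposition~\ref{highC} or Proposition~\ref{2npointsrep}), $C=\{\rho_0\}$ the permutation swapping all pairs inside blocks and $R=\{\rho_1\}$ a $2$-transposition yields the rank-$n/2$ string C-groups (3), (4) and (9)--(12) of Table~\ref{RC2}. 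In those examples the kernel elements generated inside $G_0$ form only the even-weight submodule, which misses the all-ones vector exactly because $n/2$ is odd, so no independence violation arises; your ``submodule already contained in $\langle L\rangle$'' check cannot be correct there. Moreover, since Proposition~\ref{maximprim} only applies when $r=n/2+1$, your argument never establishes ``$n/2$ odd and $G\cong\Cyc_2\wr\Sym_{n/2}$'' in the main case $r=n/2$, which is the whole point of the statement.

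A second, related gap: you never prove $\langle L\rangle\cong\Sym_{n/2}$ as an abstract group, only that its block image $\Psi$ is $\Sym_{n/2}$ or $\Alt_{n/2}$; a priori $\langle L\rangle$ could meet ${\rm Ker}(f)$ nontrivially. The paper closes both gaps uniformly (for $|L|=n/2-2$ and $|L|=n/2-1$) with Theorem~\ref{indexes} and the intersection property: since $\langle L\rangle=G_{i,j}$ sits under the two proper subgroups $G_i,G_j$ of $G\leq\Cyc_2\wr H$, it cannot have index $1$ or $2$ in $\Cyc_2\wr H$, and if its kernel were $\Cyc_2$ then $G_i$ and $G_j$ would both have index at most $2$, hence both would contain all even permutations fixing the blocks, forcing these into $G_{i,j}$, a contradiction; thus $\langle L\rangle\cong H$, the case $H\cong\Alt_{n/2}$ is killed by the rank bound of Theorem~\ref{maxan} (not by a module argument), and finally the element of $C$ is the central involution, which must be an odd permutation lest it lie in $G_i$ — this is what forces $n/2$ odd, $|R|=|C|=1$ and $G\cong\Cyc_2\wr\Sym_{n/2}$ in both rank cases. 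You would need an argument of this kind in place of your exclusion of $|L|=n/2-2$.
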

\begin{proof}
In this case $|L|\geq n/2-2$, hence $\Psi$ is a transitive  sggi of rank $n/2-2$ and degree $n/2$. Suppose that $\Psi$ is neither $\Sym_{n/2}$ nor  $\Alt_{n/2}$. Hence by Lemma~\ref{maroti2} $\Psi$ is isomorphic to 
one of the groups  $\Dy_{10}$, $\PSL_2(5)$ or  $\PGL_2(5)$. But Proposition~\ref{phiprim} excludes the possibility of $\Psi$ being isomorphic to these groups.
Hence $\langle L\rangle$ is a subgroup of $\Cyc_2\wr H$ with $H$ being $\Alt_{n/2}$ or $\Sym_{n/2}$. 
Let us now use Theorem~\ref{indexes} to conclude that $\langle L\rangle\cong H$.
As $\langle L\rangle=G_{i,j}$ for some $i$ and $j$, $\langle L\rangle$ cannot be an index $2$ subgroup of  $\Cyc_2\wr H$.
Suppose that $|\langle L\rangle|=2|H|$. Then for distinct $i$ and $j$ the subgroups $G_i$ and $G_j$ must be index $2$ subgroups of $\Cyc_2\wr H$.
Thus $G_i$ and $G_j$, and consequently $\langle L\rangle$, contain all even permutations fixing the blocks, a contradiction.
Hence $\langle L\rangle\cong H$, as wanted.

Now suppose that $H\cong \Alt_{n/2}$.
Then $\Psi\cong \Alt_{n/2}\cong\langle L\rangle$, thus $ \Psi$  is a string C-group. Hence  we can use Theorem~\ref{maxan} to conclude that $|L|\leq \frac{n/2+1}{2}$. This implies that $n/2\leq 5$. 
The only alternating group of degree at most $5$ that is a string C-group is  $\Alt_5$. This implies that $\langle L\rangle\cong \Psi\cong \Alt_5$.
But $\Alt_5$ does not have a transitive imprimitive permutation representation on $10$ points, a contradiction.

Suppose that $R\cup C=\{\rho_i,\rho_j\}$.
As $G_{i,j}$ does not contain a nontrivial permutation fixing all blocks, we may consider that, $G_{i,j}$ is an index $2$ subgroup of $G_j$ and $G_i$ is an index $2$ subgroup of $\Cyc_2\wr \Sym_{n/2}$.
Hence $\rho_i$ commutes with all the elements of $G_{i,j}$, thus $\rho_i\in C$. 
Moreover as $G_j$ is not a subgroup of $G_i$, $\rho_i$ must be an odd permutation, which is only possible if $n/2$ is odd.
We also conclude that  $G\cong \Cyc_2\wr \Sym_{n/2}$ 

The rest follows from Theorems~\ref{1} and ~\ref{2}.
\end{proof}

Now using Proposition~\ref{LC=2} it is possible to determine the possibilities for the permutation representation graph of $\Gamma$ when $|R\cup C|=2$.

\begin{coro}\label{k=2RC2}
  Let $r\geq n/2\geq 7$. If $|R\cup C|=2$ then $n/2$ is odd and $\mathcal{G}$ is, up to duality, one of the graphs of Table~\ref{RC2} (located at the end of the paper).
\end{coro}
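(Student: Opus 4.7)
By Proposition~\ref{LC=2}, the hypothesis $|R\cup C|=2$ forces $G\cong \Cyc_2\wr \Sym_{n/2}$ with $n/2$ odd, $\langle L\rangle\cong \Sym_{n/2}$, and $|R|=|C|=1$. Write $C=\{\alpha\}$ and $R=\{\tau\}$. The plan is to identify $\alpha$, then pin down the block action of $\langle L\rangle$ and its lift to the $n$ points, and finally enumerate the placements of $\alpha,\tau$ in the string to produce $\mathcal{G}$.

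First, $\alpha$ centralises $\langle L\rangle\cong \Sym_{n/2}$. Inside $\Cyc_2\wr \Sym_{n/2}$, the centraliser of any complement to the base $(\Cyc_2)^{n/2}$ reduces to the centre of the wreath product, because $Z(\Sym_{n/2})=1$ for $n/2\geq 3$ and the only $\Sym_{n/2}$-fixed vectors of $(\Cyc_2)^{n/2}$ are $0$ and the all-ones vector. Hence $\alpha$ must be the unique non-trivial central involution, namely the permutation swapping the two points of every block.

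Next, since $|L|=r-2\in\{n/2-2,n/2-1\}$ and $L$ generates $\Sym_{n/2}$ as a string C-group on the blocks, Theorems~\ref{1} and~\ref{2} (summarised in Proposition~\ref{highC}) leave, up to duality, exactly two possibilities for the block representation graph: the path of rank $n/2-1$ (the simplex), or the rank $n/2-2$ graph with Schl\"afli symbol $\{4,6,3,\ldots,3\}$. As a complement inside $\Cyc_2\wr \Sym_{n/2}$, the subgroup $\langle L\rangle$ is either intransitive on the $n$ points (two orbits of size $n/2$, so the $L$-restriction of $\mathcal{G}$ consists of two disjoint copies of the block graph) or transitive; in the transitive case Propositions~\ref{2npointsub} and~\ref{2npointsrep} supply the unique permutation representation graph up to duality.

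Finally, the labels of $L$ form an interval inside $\{0,\ldots,r-1\}$, so the two remaining labels fall, up to duality, into only two configurations: both at one end of the string, or one at each end. The commuting property forces the label of $\alpha$ to be non-adjacent to every label of $L$ whenever such a position exists, and $\tau$ is assigned the remaining label. It then remains to enumerate the admissible concrete involutions $\tau$ compatible with the commuting property, the intersection property of the resulting string C-group, and the requirement $\langle L\cup C\cup R\rangle=\Cyc_2\wr \Sym_{n/2}$. This last combinatorial verification is the main obstacle: for each scaffold produced by the previous steps, one must list the valid $\tau$ up to duality and confirm that every resulting configuration realises a string C-group (and that no spurious configurations sneak in). Carrying this out produces exactly the graphs displayed in Table~\ref{RC2}.
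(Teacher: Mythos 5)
Your proposal follows essentially the same route as the paper's proof: invoke Proposition~\ref{LC=2}, identify the element of $C$ as the central involution swapping the two points of every block, split according to $|L|\in\{n/2-2,\,n/2-1\}$ and according to whether $\langle L\rangle$ is transitive or intransitive on the $n$ points (via Propositions~\ref{highC} and~\ref{2npointsrep}), place the two remaining labels at the ends of the string with $C=\{\rho_0\}$ up to duality, and then determine the element of $R$ from the commuting property. The paper is just as terse about the final enumeration as you are (its only extra remark is that the element of $R$ must be an even permutation), so your deferred case check matches the level of detail of the original argument and does not constitute a departure from it.
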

\begin{proof}
  Using Proposition~\ref{LC=2}, it is possible to determine the possibilities for the permutation representation graph of $G$ when $|R\cup C| = 2$. 

  Up to duality we may assume that $C = \{\rho_0\}$ where $\rho_0$ is the permutation swapping all pairs of points within a block,
  say $\rho_0 =(1,2)\ldots(n-1,n)$. 
  Then either $R=\{\rho_1\}$ or $R=\{\rho_{r-1}\}$. 

  We also need to consider two possibilities, either $|L| = n/2-1$ or $|L| = n/2-2$.  When $|L| = n/2-1$, $\langle L\rangle$ is the automorphism group of the simplex, which is self dual; 
  when $|L| = n/2-2$, $\langle L\rangle$ is the automorphism group of one of the two abstract regular polytopes of rank $n/2-2$ for $\Sym_{n/2}$, having one of the Schl{\"a}fli symbol $\{3,\ldots,3,6,4\}$ or $\{4,6,3,\ldots,3\}$ (which are dual to each other).
Finally, the possibilities for the permutation representation graph of  these polytopes are determined by Propositions~\ref{highC} and ~\ref{2npointsrep}, depending on whether $\langle L\rangle$ is intransitive or transitive. If  $\langle L\rangle$  is intransitive, the  permutation graph of $\langle L\rangle$ is given by two copies of one of the graphs given in Proposition~\ref{highC}.
The possibilities for the element of $R$, which must be an even permutation, are determined by the commuting property. 
With this we obtain the  graphs listed in Table~\ref{RC2}.
\end{proof}

It remains to consider the case \( |R \cup C| = 1 \). In what follows, we determine the action on the blocks. To achieve this, a lower bound on \( n \) is required.

\begin{prop}\label{P}
  Let $r\geq n/2\geq 7$. If $|R\cup C|=1$ then $\Psi\cong \Sym_{n/2}$ and  the action of $G$ on the blocks is given by the following graph:
   $$\xymatrix@-1.7pc{*+[o][F]{}\ar@{-}[rr]^1&&*+[o][F]{}\ar@{.}[rr]&&*+[o][F]{}\ar@{-}[rr]^{r-1} &&*+[o][F]{} }$$
\end{prop}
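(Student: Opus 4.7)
\medskip

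\noindent\textbf{Proof plan.} The plan is to saturate the bounds of Proposition~\ref{LCR}: once $|R\cup C|=1$ there is no slack left, so $L$ must realise the Coxeter action of $\Sym_{n/2}$ on the blocks, and the commuting property then fixes the ordering of the labels along that path.

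First I would combine $|L|=r-1\geq n/2-1$ (which follows from $|R\cup C|=1$ and $r\geq n/2$) with $|L|\leq m-1=n/2-1$ from Proposition~\ref{LCR}(b). This forces $|L|=n/2-1$, whence $r=n/2$. Next, Proposition~\ref{LCR}(e) rules out the labels of $L$ being a non-interval (since that would give $r<n/2$), so they form an interval of size $r-1$ inside $\{0,\ldots,r-1\}$; up to duality I take $L=\{\rho_1,\ldots,\rho_{r-1}\}$ and $R\cup C=\{\rho_0\}$. Since $|L|=m-1$ and $m=n/2\geq 7\geq 5$, Proposition~\ref{LCR}(c) then gives that $\langle L\rangle$ acts on the $m$ blocks as $\Sym_m$ via the standard Coxeter generators; hence $\Psi\cong\Sym_{n/2}$, and the block-action graph is a path on $m$ vertices whose $r-1$ edges carry the labels $\{1,\ldots,r-1\}$ in some order.

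To determine that order I would invoke the commuting property of the sggi: for $|i-j|>1$, $\rho_i$ and $\rho_j$ commute, so their block images---each a transposition---must be disjoint, i.e., the corresponding edges of the path are non-adjacent. Equivalently, any two edges of the path sharing a vertex carry labels differing by exactly $1$. Reading the labels along the path from one end to the other then forces them to appear in the sequence $1,2,\ldots,r-1$ (or its reverse, which yields the same labeled graph), reproducing the diagram displayed in the statement.

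The hardest step, if any, is the final ordering argument; it reduces to the elementary observation that a permutation of $\{1,\ldots,r-1\}$ whose consecutive entries differ by exactly one must be the identity or its reverse. The rest is bookkeeping on top of Proposition~\ref{LCR}, so I do not anticipate any genuine obstruction.
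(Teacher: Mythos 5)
Your argument is correct, and it reaches the same endpoint as the paper by a slightly different packaging of the key input. The paper's proof is a two-line appeal to Corollary~\ref{CCcoro}: since $|R\cup C|=1$ forces $|L|=r-1\geq n/2-1$ while Proposition~\ref{LCR} caps $|L|$ at $m-1=n/2-1$, the block action $\Psi$ is a transitive sggi of degree $n/2$ generated by $n/2-1$ independent involutions, and Corollary~\ref{CCcoro} (Cameron--Cara plus the string condition) immediately identifies it as the simplex group $\Sym_{n/2}$ with the labelled path of Proposition~\ref{highC}. You instead invoke Proposition~\ref{LCR}(b),(c),(e): (e) to force the labels of $L$ to be an interval (so, up to duality, $R\cup C=\{\rho_0\}$), (c) to conclude that the images of $L$ are the standard Coxeter transpositions of $\Sym_{n/2}$, and then an explicit commuting-property argument to pin down the order of the labels along the path (adjacent edges of the path must carry consecutive labels, hence the labels read $1,2,\ldots,r-1$ up to reversal). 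What your route buys is that the ordering of the labels, which in the paper is absorbed into Corollary~\ref{CCcoro}, is made explicit by an elementary observation; what the paper's route buys is brevity, since Corollary~\ref{CCcoro} already delivers the labelled graph in one step. One small point worth stating explicitly in your final step: the images of distinct elements of $L$ are distinct transpositions (this is implicit in \ref{LCR}(c) and in independence), which is what rules out the degenerate case of two commuting generators having equal block images when you convert commutation into disjointness of edges.
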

\begin{proof}
  In this case $|L| \geq n/2-1$, hence $\Psi$ is a transitive sggi of rank $n/2 -1$ and degree $n/2$. Up to duality we may assume that $R\cup C=\{\rho_0\}$.  
  By Corollary~\ref{CCcoro},  $\Psi\cong \Sym_{n/2}$ and the block action graph is the one given in the statement of this proposition.

  \end{proof}
We now determine the action of \( \langle L \rangle \). Either \( \langle L \rangle \cong \Sym_{n/2} \) or \( \langle L \rangle \not\cong \Sym_{n/2} \), and the following proposition addresses the former case.

  \begin{prop}\label{S}
  Let $r\geq n/2\geq 7$. If $|R\cup C|=1$ and $\langle L \rangle \cong \Sym_{n/2}$ then $\mathcal{G}$ is one of the graphs of Table~\ref{L=S} or the graphs (1) and (2) of Table~\ref{m=2table}.
\end{prop}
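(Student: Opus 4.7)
The plan is to pin down the permutation representation graph $\mathcal{G}$ by first determining the single extra generator $\rho_0$, and then enumerating the possible shapes of the $\langle L\rangle$-part in the two cases where $\langle L\rangle$ is transitive or intransitive on the $n$ points.

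By Proposition~\ref{P} we may assume, up to duality, that $R\cup C=\{\rho_0\}$ and $L=\{\rho_1,\ldots,\rho_{r-1}\}$ with $r=n/2$, and that the block action of $L$ is the standard Coxeter generating set of $\Sym_{n/2}$. Since $\rho_0$ commutes with $\rho_2,\ldots,\rho_{r-1}$, its induced action on $\{B_1,\ldots,B_{n/2}\}$ centralizes the block action of $\langle \rho_2,\ldots,\rho_{r-1}\rangle$, which is the stabilizer of $B_1$, isomorphic to $\Sym_{n/2-1}$. For $n/2\ge 7$ this centralizer in $\Sym_{n/2}$ is trivial, so $\rho_0$ fixes every block set-wise; since the blocks have size two, $\rho_0$ is a product of within-block transpositions indexed by some subset $T\subseteq\{B_1,\ldots,B_{n/2}\}$. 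The commutation with $\rho_2,\ldots,\rho_{r-1}$ then forces $T$ to be $\Sym_{n/2-1}$-invariant, whence $T\in\{\{B_1\},\,\{B_2,\ldots,B_{n/2}\},\,\{B_1,\ldots,B_{n/2}\}\}$ (the empty set being excluded by $\rho_0\ne 1$).

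The next step is to split on the transitivity of $\langle L\rangle$. In the intransitive case Proposition~\ref{highC} forces $\langle L\rangle$ to be the disjoint union of two copies of the standard Coxeter path on $n/2$ vertices, and the blocks must be the natural pairing between the two orbits because the centralizer of $\Sym_{n/2}$ in $\Sym_{n/2}$ is trivial. Attaching each of the three admissible $\rho_0$ then gives precisely the graphs listed in Table~\ref{L=S}: independence of $\rho_0$ from $L$ is automatic because $\langle L\rangle$ preserves the two orbits while $\rho_0$ exchanges points between them, and the intersection property follows by reducing to the known property for the Coxeter string $\rho_1,\ldots,\rho_{r-1}$ together with direct checks on the few sub-intersections that involve $\rho_0$. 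In the transitive case Proposition~\ref{2npointsrep} (applied with $n/2$ in place of $n$) identifies $\langle L\rangle$, up to duality, as the unique double-path representation of $\Sym_{n/2}$ on $n$ points, and the blocks appear in that graph as the pairs joined by the $I_i$-labelled vertical edges. Each of the three candidates for $\rho_0$ must then be tested for independence and the intersection property; the survivors coincide with graphs~(1) and~(2) of Table~\ref{m=2table}, reflecting the fact that the transitive representation of $\Sym_{n/2}$ on $n$ points also admits a block system with two blocks of size $n/2$, and so naturally reappears in the later $m=2$ classification.

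The main obstacle will be the transitive case. There one must use explicit knowledge of which within-block transpositions already lie in $\langle L\rangle$ in order to decide which of the three candidates for $\rho_0$ produce independent generating sets, and then to identify the surviving configurations with pre-existing rows of Table~\ref{m=2table} rather than listing them as new families. This cross-identification between the $k=2$ and $m=2$ tables is what saves the statement from producing duplicate entries.
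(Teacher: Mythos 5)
Your overall strategy is essentially the paper's: after Proposition~\ref{P} you split on whether $\langle L\rangle$ is transitive and use Proposition~\ref{highC} (two copies of the simplex path) resp.\ Proposition~\ref{2npointsrep} (the double-path representation on $n$ points) to pin down the $\langle L\rangle$-part, then attach $\rho_0$. Your preliminary determination of $\rho_0$ is a welcome explicit supplement to what the paper leaves terse: since $\rho_0$ commutes with $\rho_2,\ldots,\rho_{r-1}$, whose block action is the point stabilizer $\Sym_{n/2-1}$ with trivial centralizer in $\Sym_{n/2}$, $\rho_0$ fixes every block and is a product of within-block transpositions over an invariant set, i.e.\ over $\{B_1\}$, over $\{B_2,\ldots,B_{n/2}\}$, or over all blocks. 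That part is correct.

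The problem is the final matching of configurations to graphs, which as written is wrong and hides an incomplete enumeration. In the intransitive case the three choices of $\rho_0$ yield graphs (13) and (14) of Table~\ref{L=S} \emph{and} graph (1) of Table~\ref{m=2table} (the all-blocks, central choice), not ``precisely the graphs listed in Table~\ref{L=S}''. In the transitive case they yield graphs (15) and (16) of Table~\ref{L=S} (the single-block and complementary choices: note that in graph (15) the label $0$ sits on only the first vertical edge, in graph (16) on all but the first) together with graph (2) of Table~\ref{m=2table} (the central choice); the ``survivors'' are emphatically not ``(1) and (2)''. Indeed graph (1) cannot arise from a transitive $\langle L\rangle$ at all, since deleting its $0$-edges disconnects it into two paths, so there $\langle\rho_1,\ldots,\rho_{r-1}\rangle$ is intransitive. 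Moreover, no candidate is eliminated by ``testing independence and the intersection property'': all three survive in each case, and for this direction of the classification no such test is needed, because $(G,S)$ is assumed to be a string C-group and the proposition only asserts that $\mathcal{G}$ lies in the list. As written, your transitive case silently discards the two configurations giving graphs (15) and (16) and misattributes graph (1); the union of your two lists happens to coincide with the statement, but the intermediate claims are false and the case-by-case identification needs to be redone correctly (intransitive $\mapsto$ (13), (14), (1); transitive $\mapsto$ (15), (16), (2)).
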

\begin{proof}
  First, consider the case where $\langle L\rangle$ is intransitive.
  Then $\langle L\rangle$ is represented by two copies of the permutation graph of the symplex. 
  Suppose first that $|C|=1$. In this case $G\cong \Cyc_2\times \Sym_{n/2}$ and $G$ admits another block system with exactly two blocks. The permutation representation graph of $\Gamma$ is the graph (1) appearing on Table~\ref{m=2table}.
  If $|C|=0$ then we get, up to duality, the permutation representation graphs (13) and (14) of Table~\ref{L=S}.

  Consider now that $\langle L\rangle$ is transitive. In this case the permutation representation of $\langle L\rangle$ is given by Proposition~\ref{2npointsrep}.
  If $|C|=1$ then $G\cong \Cyc_2\times \Sym_{n/2}$ and $G$ admits another block system with exactly two blocks. The permutation representation graph of $\Gamma$ is the graph (2) of Table~\ref{m=2table}. 
  If $|C|=0$ then, we get the graphs (15) and (16) of Table~\ref{L=S}.
 \end{proof}

In what remains of this section assume the following: 
\begin{itemize}
\item $r\geq n/2\geq 7$;
\item  $R\cup C=\{\rho_0\}$;
\item $\langle L \rangle \not\cong \Sym_{n/2}$; and
\item $f : \langle L\rangle \rightarrow \Sym_{n/2}$, the homomorphism induced by the action of $\langle L\rangle$ on the $n/2$ blocks.
\end{itemize}
 By Proposition~\ref{P} the permutation representation graph of  $\Psi$ determines a natural ordering on the blocks: let $B_1$ be the first block (on the left) and $B_{n/2}$ be the last block (on the right).
 Considering $f: \langle L\rangle \rightarrow \Sym_{n/2}$ as above, by Theorem~\ref{C2Sn/2}, either ${\rm Ker}(f)\cong \Cyc_2 $, ${\rm Ker}(f)\cong (\Cyc_2)^{n/2-1}$ or ${\rm Ker}(f)\cong (\Cyc_2)^{n/2}$.
 However, it cannot be ${\rm Ker}(f) \cong (\Cyc_2)^{n/2}$, since this would imply that $\langle L \rangle$ is the full wreath product $\Cyc_2 \wr \Sym_{n/2}$, which is not possible because $R \cup C$ is nonempty.
 We will represent each element $\alpha\in {\rm Ker}(f)$ as a vector $\tilde{\alpha}\in \{0,1\}^{n/2}$. 
 Particularly, the central involution permuting all pairs of points within a block corresponds to the all 1's vector.
 In what follows  consider the following notation where $x^i$ represents a sequence of length $i$ of $x$'s, say  ($x,\,x,\ldots,\, x$), $x\in\{0,1\}$.

 \[
 \begin{array}{ll}
\mathcal{O}:=(0^r)&\mbox{all 0's vector}\\
\mathcal{U}:=(1^r)&\mbox{all 1's vector}\\
\mathcal{L}_i:=(1^i, 0^{r-i})&\mbox{a left-side 1's vector when }i\in\{1,\ldots,r-1\}; \, \mathcal{L}_0:= \mathcal{O}\mbox{ and }\mathcal{L}_r:= \mathcal{U}\\
\mathcal{R}_i:=(0^i, 1^{r-i})&\mbox{a right-side 1's vector when }i\in\{1,\ldots, r-1\}; \,  \mathcal{R}_{r}:= \mathcal{O}\mbox{ and }\mathcal{R}_0:= \mathcal{U}\\
\mathcal{V}_i:=(1^i,0,0,1^{r-(i+2)})&\mbox{a 2-central 0's vector when }i\in\{1, \ldots, r-3\};\,  \mathcal{V}_0:= \mathcal{R}_{2}; \, \mathcal{V}_{r-2}:= \mathcal{L}_{r-2}\\
\mathcal{T}_i:=(1^i,0,0,0,1^{r-(i+3)})&\mbox{a 3-central 0's vector when } i\in\{1,\ldots,r-4\}\}; \,  \mathcal{T}_0:= \mathcal{R}_{3}; \,\mathcal{T}_{r-3}:= \mathcal{L}_{r-3};\\
\end{array}
 \]
 
 For $i\in\{1,\ldots,r-1\}$  let  $\rho_i=\alpha_i\beta_i$ with $\alpha_i$ being a permutation fixing the blocks and $\beta_i$ being the permutation swapping $B_i$ and $B_{i+1}$. 
Then thanks  to the commuting property, $\tilde{\alpha}_i$ is  either  $\mathcal{O}$, $\mathcal{L}_{i-1}$, $\mathcal{R}_{i+1}$ or $\mathcal{V}_{i-1}$.

 Let $\delta_i:=(\rho_i\rho_{i+1})^3$ ($i>0$). In the following table we determine all the possibilities for $\tilde{\delta}_i$. As  $\delta_i\in {\rm Ker}(f)$ and  ${\rm Ker}(f)\cong \Cyc_2 $ or ${\rm Ker}(f)\cong (\Cyc_2)^{n/2-1}$,  $\delta_i=(\rho_i\rho_{i+1})^3$  is either the permutation $(1,2)\ldots (n-1,n)$ or an even permutation.
 In the following table we determine all the possibilities for  $\tilde{\delta}_i$ for all the possibilities for the pair $(\tilde{\alpha}_i,\tilde{\alpha}_{i+1})$. 
 In some cases the result is an odd permutation, thus these cases cannot happen (otherwise $\langle L\rangle$ is the full wreath product $\Cyc_2\wr \Sym_{n/2}$,  implying that $\rho_0\in \langle L\rangle$, a contradiction).   
  \begin{table}[htbp]
 \begin{tabular}{|c|cc|cc|}
 \hline
   $i\in\{2,\ldots, r-3\}$ &&  $i=1$ && $i=r-2$\\
   \hline
   &&&&\\
 \begin{tabular}{c|cccc}
$ \tilde{\alpha_i}	\backslash \tilde{\alpha}_{i+1}$ &$\mathcal{O}$&$\mathcal{L}_i$& $\mathcal{R}_{i+2}$&$\mathcal{V}_i$\\
 \hline
 $\mathcal{O}$&$\mathcal{O}$&$\mathcal{L}_{i+2}$& $\mathcal{R}_{i+2}$&$\mathcal{U}$\\
 $\mathcal{L}_{i-1}$&$\mathcal{L}_{i-1}$&odd&$\mathcal{T}_{i-1}$&$\mathcal{R}_{i-1}$\\
 $\mathcal{R}_{i+1}$ & $\mathcal{R}_{i-1}$ & $\mathcal{T}_{i-1}$ & odd & $\mathcal{L}_{i-1}$\\
 $\mathcal{V}_{i-1}$&$\mathcal{U}$&$\mathcal{R}_{i+2}$&$\mathcal{L}_{i+2}$&$\mathcal{O}$\\
 \end{tabular}
 & \qquad&
  \begin{tabular}{c|cccc}
 $ \tilde{\alpha_1}	\backslash \tilde{\alpha}_2$&$\mathcal{O}$&$\mathcal{L}_1$& $\mathcal{R}_3$&$\mathcal{V}_1$\\
  \hline 
  $\mathcal{O}$& $\mathcal{O}$&odd&$\mathcal{R}_3$&$\mathcal{U}$\\
  $\mathcal{R}_2$& $\mathcal{U}$&$\mathcal{R}_3$&odd&$\mathcal{O}$\\
  \end{tabular}
 &\qquad &
  \begin{tabular}{c|cc}
  $ \tilde{\alpha}_{r-2}	\backslash \tilde{\alpha}_{r-1}$ &$\mathcal{O}$& $\mathcal{L}_{r-2}$\\
  \hline 
  $\mathcal{O}$& $\mathcal{O}$&$\mathcal{U}$\\
  $\mathcal{L}_{r-3}$&$\mathcal{L}_{r-3}$&odd\\
 $\mathcal{R}_{r-1}$& odd & $\mathcal{L}_{r-3}$\\
 $\mathcal{V}_{r-3}$&$\mathcal{U}$&$\mathcal{O}$\\
  \end{tabular}\\
  &&&&\\
  \hline
 \end{tabular}
   \caption{ Possibilities for $\tilde{\delta}_i$.}
      \label{LRBO}
    \end{table}


\begin{prop}\label{K=C2} 
  Let $r\geq n/2\geq 7$, $|R\cup C|=1$ and $\langle L \rangle \ncong \Sym_{n/2}$.
  Let $f : \langle L\rangle \rightarrow \Sym_{n/2}$  be the homomorphism induced by the action of $\langle L\rangle$ on the $n/2$ blocks.
  If ${\rm Ker}(f) \cong \Cyc_2$ then $\mathcal{G}$ is, up to duality,  one of the graphs of Table~\ref{kc2}.
\end{prop}
\begin{proof}
 In this case  $\delta_i=(1,2)(3,4)\ldots(n-1,n)$ for some $i\in\{1,\ldots,r-2\}$ and, for $j\in\{1,\ldots, r-2\}\setminus \{i\}$, $\delta_j$ must be trivial.
In addition, up to duality, either  $\rho_0=(1,2)$ or  $\rho_0=(3,4)\ldots (n-1,n).$ 
Since $\tilde{\delta}_i=\mathcal{U}$, the possibilities for  $(\tilde{\alpha_i}, \tilde{\alpha}_{i+1})$ are determined in Table~\ref{LRBO}. 

Suppose that $(\tilde{\alpha_i}, \tilde{\alpha}_{i+1})=(\mathcal{V}_{i-1},\mathcal{O})$ and $i\neq 1$. 
Then, as $\tilde{\delta}_j=\mathcal{O}$  for $j\neq i$,  $\tilde{\alpha_j}=\mathcal{V}_{j-1}$ for each $j\in\{2,\ldots,i-1\}$ and $\tilde{\alpha_1}=\mathcal{R}_2$.
This gives the graphs (17) and (18) of Table~\ref{kc2}.
When $i=1$, $(\tilde{\alpha_i}, \tilde{\alpha}_{i+1})=(\mathcal{R}_{2},\mathcal{O})$, giving the graphs (21) and (22) of Table~\ref{kc2}.

Suppose now that $(\tilde{\alpha_i}, \tilde{\alpha}_{i+1})=(\mathcal{O}, \mathcal{V}_i)$ and $i\neq r-2$.
Using similar arguments as above, we get the graphs (19) and (20) of Table~\ref{kc2}.
When $i=r-2$,  $(\tilde{\alpha_i}, \tilde{\alpha}_{i+1})=(\mathcal{O}, \mathcal{L}_{r-2})=(\mathcal{O}, \mathcal{V}_{r-2})$, giving the graphs (23) and (24) of Table~\ref{kc2}.
\end{proof}

\begin{lemma}\label{r0}
 Let $r\geq n/2\geq 7$, $|R\cup C|=1$ and $\langle L \rangle \ncong \Sym_{n/2}$.  
  Let $i\in\{1,\ldots,r-2\}$. If $\delta_i$ is a non-trivial even permutation and $\delta_i\neq (1,2)(3,4)\ldots(n-1,n)$ then
$n/2$ is odd and $\rho_0=(1,2)(3,4)\ldots(n-1,n)$.
\end{lemma}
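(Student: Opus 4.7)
The plan: first identify ${\rm Ker}(f)\cong(\Cyc_2)^{n/2-1}$ (so $\langle L\rangle$ has index $2$ in $\Cyc_2\wr\Sym_{n/2}$), and then use the string C-group intersection property together with a parity argument to force $\rho_0$ to centralise $L$, whence $\rho_0=U$.

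By Proposition~\ref{P} the block actions $f(\rho_i),f(\rho_{i+1})$ are two adjacent transpositions in $\Sym_{n/2}$, so $(f(\rho_i)f(\rho_{i+1}))^3=1$, giving $\delta_i\in{\rm Ker}(f)$. By Corollary~\ref{C2Sn/2}, ${\rm Ker}(f)$ is trivial or isomorphic to one of $\Cyc_2,\,(\Cyc_2)^{n/2-1},\,(\Cyc_2)^{n/2}$. The first two are excluded by the hypothesis that $\delta_i$ is a non-trivial element of ${\rm Ker}(f)$ distinct from $U$, while ${\rm Ker}(f)\cong(\Cyc_2)^{n/2}$ would force $\langle L\rangle=\Cyc_2\wr\Sym_{n/2}$, contradicting the independence condition $\rho_0\notin\langle L\rangle$. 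Hence ${\rm Ker}(f)\cong(\Cyc_2)^{n/2-1}$ and $\langle L\rangle$ has index $2$ in $\Cyc_2\wr\Sym_{n/2}$.

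Let $\eta\colon\Cyc_2\wr\Sym_{n/2}\to\Cyc_2$ denote the surjection with ${\rm Ker}(\eta)=\langle L\rangle$. By independence $\eta(\rho_0)=1$, while $\rho_1\in L$ gives $\eta(\rho_1)=0$; hence $\eta((\rho_0\rho_1)^2)=0$, i.e.\ $(\rho_0\rho_1)^2\in\langle L\rangle$. The string C-group intersection property yields
$$\langle\rho_0,\rho_1\rangle\cap\langle L\rangle=\langle\rho_0,\rho_1\rangle\cap\langle\rho_1,\ldots,\rho_{r-1}\rangle=\langle\rho_1\rangle,$$
so $(\rho_0\rho_1)^2\in\{1,\rho_1\}$. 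The case $(\rho_0\rho_1)^2=\rho_1$ reduces to $\rho_0\rho_1\rho_0=1$ and hence $\rho_1=1$, which is absurd; so $(\rho_0\rho_1)^2=1$ and $\rho_0\rho_1=\rho_1\rho_0$.

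Combined with the commuting relations $(\rho_0\rho_j)^2=1$ for $j\geq 2$, $\rho_0$ centralises all of $L$, so $\rho_0\in Z(G)$ with $G=\langle L,\rho_0\rangle=\Cyc_2\wr\Sym_{n/2}$ (the full group, since $\langle L\rangle$ is index-two and $\rho_0\notin\langle L\rangle$). A standard computation gives $Z(\Cyc_2\wr\Sym_{n/2})=\{1,U\}$ for $n/2\geq 2$, and since $\rho_0$ is a non-trivial involution this forces $\rho_0=U$. Independence then requires $U\notin\langle L\rangle$; but $\langle L\rangle\cap\Cyc_2^{n/2}$ is the even-weight subgroup and $U$ has weight $n/2$, so this happens precisely when $n/2$ is odd. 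The delicate step is the first—carefully running through the four possibilities for ${\rm Ker}(f)$, and in particular ruling out $(\Cyc_2)^{n/2}$ via independence—after which the intersection property and the wreath-product centre computation deliver the conclusion immediately.
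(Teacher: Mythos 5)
Your proof is correct, but it follows a genuinely different route from the paper's. The paper argues by exclusion on the possible shapes of $\rho_0$: the commuting property with $\rho_2,\ldots,\rho_{r-1}$ narrows $\rho_0$ down to $(1,2)$, $(3,4)\cdots(n-1,n)$ or the all-pairs swap, and the first two are then killed by a case analysis ($i\neq 1$ versus $i=1$, using Table~\ref{LRBO}) that manufactures explicit kernel elements such as $(1,2)(3,4)$ and $(3,4)(5,6)$ inside parabolic subgroups like $G_{0,1}$ and contradicts the intersection property; the parity of $n/2$ is settled at the end exactly as you do, since $U\in G_0$ would violate independence. You instead exploit the structural consequence of the hypothesis: $\delta_i\in\mathrm{Ker}(f)\setminus\{1,U\}$ forces $\mathrm{Ker}(f)$ to be the even-weight module (the $(\Cyc_2)^{n/2}$ option being excluded by independence, consistently with the paper's standing reduction via Corollary~\ref{C2Sn/2}), so $\langle L\rangle$ is a normal subgroup of index $2$ in $\Cyc_2\wr\Sym_{n/2}$; then $(\rho_0\rho_1)^2$, being a square, lies in $\langle L\rangle$, and the intersection property $\langle\rho_0,\rho_1\rangle\cap\langle L\rangle=\langle\rho_1\rangle$ forces $(\rho_0\rho_1)^2=1$, making $\rho_0$ central in $G=\Cyc_2\wr\Sym_{n/2}$, hence $\rho_0=U$, with $n/2$ odd by independence. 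All the individual steps check out (the case $(\rho_0\rho_1)^2=\rho_1$ indeed collapses to $\rho_1=1$, and $Z(\Cyc_2\wr\Sym_{n/2})=\{1,U\}$ is correct in the range needed). What your approach buys is brevity and uniformity: it avoids Table~\ref{LRBO}, the $i=1$ special case and the explicit permutation computations, and it yields as a by-product that $\rho_0$ commutes with $\rho_1$ (so $p_1=2$), which the paper only obtains implicitly later; what the paper's approach buys is that it stays entirely within the permutation-representation-graph machinery used throughout the section, never invoking normality of $\langle L\rangle$ or the centre of the wreath product.
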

\begin{proof}
Suppose that neither $\delta_i$ nor $\rho_0$ is equal to  the permutation $(1,2)(3,4)\ldots(n-1,n)$.
Then, by the commuting property, $\rho_0$ is, up to duality, one of the permutations:
$(1,2)$ or  $(3,4)\ldots(n-1,n)$.

Let us first consider that $i\neq 1$. 
Note that  $(\rho_0\rho_1)^2=(1,2)(3,4)$ and  $[(\rho_0\rho_1)^2]^{\rho_1\rho_2\rho_1}=(3,4)(5,6)$.
If $\delta_i$ fixes $B_1$ pointwise then $G_{0,1}$ contains all even permutation fixing the blocks $B_2$, ..., $B_{n/2}$, particularly $(3,4)(5,6)\in G_{0,1}$. 
Notice that, as $(3,4)(5,6)\in G_{0,1}\cap\langle \rho_0,\rho_1,\rho_2\rangle=\langle \rho_2\rangle$, this leads to a contradiction.
If $\delta_i$  swaps the points of the block $B_1$, then $(1,2)(3,4)\in G_{0,1}$. Hence $(1,2)(3,4)\in G_{0,1}\cap\langle \rho_0,\rho_1\rangle$, a contradiction.
Thus if $\delta_i$ is not $(1,2)(3,4)\ldots(n-1,n)$ then  $\rho_0=(1,2)(3,4)\ldots(n-1,n)$.

Now suppose that $i=1$. Then $G_0$ contains all even permutations fixing the blocks $B_1,\ldots, B_{n/2}$. Particularly $(1,2)(3,4)\in G_0$.
If $\rho_0=(1,2)$ then $G_{<4}$ contains all permutations fixing the blocks $B_1$, $B_2$ and $B_3$ particularly $(1,2)(3,4)\in G_{<4}$. But $(1,2)(3,4)\notin \langle \rho_1,\rho_2\rangle$, contradicting the intersection property.
Now consider that  $\rho_0=(3,4)\ldots (n-1,n)$.
According to Table~\ref{LRBO}, $(\tilde{\alpha}_1, \tilde{\alpha}_2)\in \{(\mathcal{O},\mathcal{R}_3), (\mathcal{R}_2,\mathcal{L}_1)\}$, which gives $\tilde{\delta}_1=(0^3,1^{n/2-3})$. 
Thus  $n/2$ must be odd and therefore $\rho_0$ is even. As $G_0$ contains all even permutation fixing the blocks,  $\rho_0\in G_0$, a contradiction.
This shows that $\rho_0=(1,2)(3,4)\ldots(n-1,n)$.

Finally, if  $n/2$ is even,  as in both cases $G_0$ contains all even permutations fixing the blocks, $(1,2)(3,4)\ldots(n-1,n) \in G_0$. Hence $\rho_0\in G_0$, a contradiction. Hence $n/2$ is odd.
\end{proof}

 \begin{lemma}\label{Cons}
   Let $r\geq n/2\geq 7$, $|R\cup C|=1$ and $\langle L \rangle \ncong \Sym_{n/2}$.
  
 Let $i,j\in\{1,\ldots,r-2\}$ and $i<j$. If $\delta_i$ and $\delta_j$ are nontrivial even permutations different from $(1,2)(3,4)\ldots(n-1,n)$  then  either  $j = i+1$ or there exists  $k\in\{i+1,\ldots ,j-1\}$ such that  $\delta_k$ is nontrivial.
\end{lemma}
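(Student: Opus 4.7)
The plan is to argue by contradiction: assume $j \geq i + 2$ and $\delta_k = O$ for every $k \in \{i+1, \ldots, j-1\}$. Applying Lemma~\ref{r0} to $\delta_i$ yields $\rho_0 = U$ and $n/2$ odd, so $\mathrm{Ker}(f) \cong (\Cyc_2)^{n/2-1}$ is the even-weight subspace of $\{0,1\}^{n/2}$. Inspecting Table~\ref{LRBO}, the equation $\tilde{\delta}_k = O$ forces $(\tilde{\alpha}_k, \tilde{\alpha}_{k+1}) \in \{(O, O),\,(V_{k-1}, V_k)\}$, so a routine induction on $k$ running from $i+1$ to $j-1$ splits the analysis into \textbf{Case A}, where $\tilde{\alpha}_{i+1} = \cdots = \tilde{\alpha}_j = O$, and \textbf{Case B}, where $\tilde{\alpha}_{i+\ell} = V_{i+\ell-1}$ for $\ell = 1, \ldots, j-i$. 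In both cases, combining this with $\delta_i, \delta_j \neq O, U$ and reading off Table~\ref{LRBO} at indices $i$ and $j$ forces $\tilde{\alpha}_i \in \{L_{i-1}, R_{i+1}\}$ and $\tilde{\alpha}_{j+1} \in \{L_j, R_{j+2}\}$, and a parity check (using $n/2$ odd) rules out the boundary $j = r-2$, so $j \leq r-3$ and $\delta_{j+1}$ itself is nontrivial.

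The contradiction then comes from the intersection property applied to the disjoint index sets $\{i-1, i\}$ and $\{i+1, \ldots, r-1\}$, which demands $\langle \rho_{i-1}, \rho_i\rangle \cap \langle \rho_{i+1}, \ldots, \rho_{r-1}\rangle = \{1\}$. The candidate witness is $\delta_{i-1}$, which is nontrivial by Table~\ref{LRBO} (the constraint $\tilde{\alpha}_i \in \{L_{i-1}, R_{i+1}\}$ rules out the only entries giving $\tilde{\delta}_{i-1} = O$) and lies in $\langle \rho_{i-1}, \rho_i\rangle$ by definition. To place $\delta_{i-1}$ in $\langle \rho_{i+1}, \ldots, \rho_{r-1}\rangle$, I would study the block-fixing kernel of this latter subgroup: it contains $\delta_j$ and $\delta_{j+1}$ together with all their conjugates under the block action $\Sym(\{B_{i+1}, \ldots, B_{n/2}\})$. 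An explicit orbit-and-XOR computation, using the $L$-, $R$-, or $V$-shape of $\tilde{\delta}_j$ and $\tilde{\delta}_{j+1}$, shows that these conjugates span a subspace of the even-weight module which in each admissible sub-case contains $\tilde{\delta}_{i-1}$, placing a nontrivial element in the forbidden intersection.

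The main obstacle is that a small number of choices of $\tilde{\alpha}_{i-1}$ (not constrained a priori by the hypotheses) produce $\tilde{\delta}_{i-1} = T_{i-2}$, a vector that falls outside the simple span because it fails the $v_1 = \cdots = v_i$ pattern generated by the naive conjugates. In those sub-cases the argument is iterated one step further back: that $\tilde{\alpha}_{i-1}$ is of $R$-type forces $\delta_{i-2}$ to be nontrivial with $\tilde{\delta}_{i-2}$ either falling inside the enlarged span built from $\langle \rho_{i-2}, \rho_{i-1}\rangle \cap \langle \rho_i, \ldots, \rho_{r-1}\rangle$, or producing a fresh parity obstruction on the even-weight condition. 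The boundary case $i = 1$ is disposed of separately via Table~\ref{LRBO} at $k = 1$: the requirement $\delta_1 \neq O, U$ forces $\tilde{\alpha}_2 \in \{L_1, R_3\}$, and any such $\tilde{\alpha}_2$ in turn forces $\delta_2 \neq O$, contradicting the hypothesis $\delta_2 = O$. Case B is treated by the analogous computation, in which the $V$-pattern of the intermediate $\tilde{\alpha}$'s contributes extra conjugates that enlarge the span appropriately.
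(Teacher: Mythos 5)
Your overall strategy differs from the paper's, and it contains a genuine gap at its core. The contradiction you aim for requires showing that $\delta_{i-1}$ lies in $H:=\langle \rho_{i+1},\ldots,\rho_{r-1}\rangle$, and this membership is exactly what is never established. The only elements of $H\cap{\rm Ker}(f)$ you can certify are $\delta_j$, $\delta_{j+1}$ (and possibly later $\delta_l$'s, which may well be trivial), their $H$-conjugates and products; since the block action of $H$ fixes $B_1,\ldots,B_i$, conjugation never alters the coordinates of a kernel vector on those blocks, so every vector in your span has restriction to $B_1,\ldots,B_i$ equal to all-$0$ or all-$1$, and even then only with tail parities dictated by $\tilde{\delta}_j,\tilde{\delta}_{j+1}$. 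But $\tilde{\delta}_{i-1}$ need not be of this shape: besides the $T_{i-2}$ sub-case you acknowledge (whose ``iterate one step back'' fallback is a hope, not an argument), there are admissible table configurations where the span simply misses the target. For instance, with all intermediate $\tilde{\alpha}$'s equal to $O$, $\tilde{\alpha}_i=L_{i-1}$, $\tilde{\alpha}_{j+1}=R_{j+2}$ and $\tilde{\alpha}_l=O$ for $l\geq j+2$, one gets $\tilde{\delta}_j=R_{j+2}$, $\tilde{\delta}_{j+1}=R_j$ and all later $\delta$'s trivial, so every certified element of $H\cap{\rm Ker}(f)$ vanishes on $B_1,\ldots,B_i$, while $\tilde{\delta}_{i-1}=L_{i+1}$ is all-$1$ there. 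So the ``orbit-and-XOR computation'' you invoke cannot go through uniformly, and the argument does not close.

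The paper avoids this trap by not targeting a prescribed vector at all: it conjugates $\delta_i$ and $\delta_j$ (each nontrivial, even, and different from $(1,2)\cdots(n-1,n)$) by a neighbouring generator to manufacture $2$-transpositions, and then uses the block actions of $G_{0,j+1}$ and $G_{0,i}$ to show that \emph{both} of these groups contain all $2$-transpositions supported inside the blocks $B_{i+1},\ldots,B_{j+1}$. Since $\delta_k$ is trivial for $i<k<j$, the middle part $\langle\rho_{i+1},\ldots,\rho_j\rangle\cong\Sym_{j-i+1}$, and any element of $G_{0,i,j+1}$ swapping a pair inside one of these $j+1-i\geq 3$ blocks must swap all of them; hence such a $2$-transposition lies in $G_{0,i}\cap G_{0,j+1}$ but not in $G_{0,i,j+1}$, violating the intersection property. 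Note that this is where the hypothesis $j\neq i+1$ enters, whereas your route uses it only indirectly through the table bookkeeping. If you want to salvage your approach you would need either to certify substantially more of $H\cap{\rm Ker}(f)$ than conjugates of $\delta_j,\delta_{j+1}$ allow, or to switch the witness from the rigid vector $\delta_{i-1}$ to a conjugation-flexible one, which is essentially what the published proof does.
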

\begin{proof}
 Suppose for a contradiction that  $j\neq i+1$ and that $\delta_k$ is trivial for $k\in\{i+1,\ldots ,j-1\}$.
 Then $\langle\rho_{i+1},\ldots, \rho_j\rangle\cong \Sym_{j-i+1}$, particularly  $\langle\rho_{i+1},\ldots, \rho_j\rangle$ does not contain a nontrivial permutation fixing  $\{B_{i+1},\,\ldots, B_{j+1}\}$.
Moreover any permutation in $G_{0,i,j+1}$ that swaps a pair of points within one of the blocks $\{B_{i+1},\,\ldots, B_{j+1}\}$, must swap all pairs of points of these blocks.

As $\delta_j\neq (1,2)(3,4)\ldots(n-1,n)$, either $\delta_j\delta_j^{\rho_{j-1}}$   or 
$\delta_j\delta_j^{\rho_{j+2}}$ is a $2$-transposition fixing $\{B_{j+1},\,\ldots, B_{n/2}\}$.
Particularly, $G_{0,i}$ contains all the $2$-transpositions fixing $\{B_{i+1},\,\ldots, B_{j+1}\}$.
Using similar argument we also conclude that $G_{0,j+1}$ contains all the $2$-transpositions fixing $\{B_{i+1},\,\ldots, B_{j+1}\}$.
 But then $G_{0,j+1}\cap G_{0,i}> G_{0,i,j+1}$, a contradiction.
\end{proof}


\begin{prop}\label{More}
  Let $r\geq n/2\geq 7$, $|R\cup C|=1$ and $\langle L \rangle \ncong \Sym_{n/2}$.
  Let $f : \langle L\rangle \rightarrow \Sym_{n/2}$ be the homomorphism induced by the action of $\langle L\rangle$ on the $n/2$ blocks.
  If ${\rm Ker}(f) = \Cyc_2^{n/2-1}$ then the following holds:
\begin{enumerate}
\item $|\{j\in\{1,\ldots r-2\}\,|\, \delta_j\neq id\}|>1$ and $\{j\in\{1,\ldots r-2\}\,|\, \delta_j\neq id\}$ is an interval;  and
\item $n/2$ is odd and $\rho_0=(1,2)(3,4)\ldots(n-1,n)$.
\end{enumerate}
\end{prop}
\begin{proof}
As ${\rm Ker}(f) = \Cyc_2^{n/2-1}$ it follows that for some $i\geq1$,  $\delta_i$ is an even permutation different from $(1,2)\ldots (n-1,n)$. 
Suppose that $\delta_j$ is trivial for $j\in\{1,\ldots,  r-2\}\setminus \{i\}$. 

Consider first $i\neq 1,\,r-2$. 
According to Table~\ref{LRBO}
for $\tilde{\delta}_{i-1}=\mathcal{O}$, we have that $\tilde{\alpha}_{i}\in\{\mathcal{O},\mathcal{V}_{i-1}\}$.
Now given that \( \tilde{\alpha}_i \in \{ \mathcal{O}, \mathcal{V}_{i-1} \} \) and, in addition, \( \tilde{\delta}_i \notin \{ \mathcal{O}, \mathcal{U} \} \), it follows that \( \tilde{\alpha}_{i+1} \in \{ \mathcal{L}_i, \mathcal{R}_{i+2} \} \). However, to guarantee that $\tilde{\delta}_{i+1}=\mathcal{O}$, we must have $\tilde{\alpha}_{i+1}\in\{\mathcal{O},\mathcal{V}_i\}$, a contradiction.
For $i=1$ (and similarly when $i=r-2$), notice that for $\tilde{\delta}_{1}\notin\{\mathcal{O},\mathcal{U}\}$, we get that
$\tilde{\alpha}_{2} \in \{\mathcal{R}_3,\mathcal{L}_1\}$. However, \( \tilde{\delta}_{2} = \mathcal{O} \) implies that \( \tilde{\alpha}_{2} \in \{ \mathcal{O}, \mathcal{V}_1 \} \), which leads to a contradiction.

This proves that $|\{j\in\{1,\ldots r-2\}\,|\, \delta_j\neq id\}|>1$ and also shows that $\{j\,|\, \delta_j\neq id\}$ must be an interval.  
By Lemma~\ref{r0},  $n/2$ is odd and $\rho_0=(1,2)(3,4)\ldots(n-1,n)$.
\end{proof}

\begin{lemma}\label{Lnot3}
 Let $r\geq n/2\geq 7$, $|R\cup C|=1$ and $\langle L \rangle \ncong \Sym_{n/2}$.
  If $x=\mbox{min}\{j\in\{1,\ldots r-2\}\,|\, \delta_j\neq id\}$ and $h\geq x+3$ then $G_{<h}$ contains all even permutations fixing $B_1,\ldots, B_h$. 
The dual of this lemma also holds.
\end{lemma}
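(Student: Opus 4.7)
Since the block action of $\langle\rho_1,\ldots,\rho_{h-1}\rangle$ is $\Sym_h$ on $\{B_1,\ldots,B_h\}$ and trivial on the remaining blocks, the elements of $G_{<h}$ that fix each of $B_1,\ldots,B_h$ setwise necessarily lie in $G_{<h}\cap {\rm Ker}(f)$. Identifying ${\rm Ker}(f)\leq (\Cyc_2)^{n/2}$ with the even-weight submodule of $\mathbb{F}_2^{n/2}$ and invoking Proposition~\ref{More} (so $n/2$ is odd, $\rho_0=U$, and $A:=\{j\in\{1,\ldots,r-2\}:\delta_j\neq id\}$ is an interval of size at least two starting at $x$, hence both $\delta_x$ and $\delta_{x+1}$ are nontrivial), my goal reduces to showing that $G_{<h}\cap {\rm Ker}(f)$ contains every even-weight vector supported on the first $h$ coordinates.

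The key algebraic reduction is that $(\Cyc_2)^{n/2}$ is abelian, so for every $\gamma\in {\rm Ker}(f)$ and $i\geq 1$ one has $\gamma^{\rho_i}=\gamma^{\alpha_i\beta_i}=\gamma^{\beta_i}$; consequently, conjugation of kernel elements by $G_{<h}$ factors through the block action and permutes the first $h$ coordinates of $\tilde\gamma$ by $\Sym_h$ while leaving the last $n/2-h$ coordinates fixed. I next produce a seed $\gamma\in G_{<h}\cap {\rm Ker}(f)$ whose first-$h$ restriction is neither $0^h$ nor $1^h$. Because $h\geq x+3$, both $\delta_x\in\langle\rho_x,\rho_{x+1}\rangle$ and $\delta_{x+1}\in\langle\rho_{x+1},\rho_{x+2}\rangle$ lie in $G_{<h}$. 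If $\tilde\delta_x\neq U$, Table~\ref{LRBO} gives $\tilde\delta_x\in\{L_{x+2},L_{x-1},R_{x+2},R_{x-1},T_{x-1}\}$, and a direct inspection using $h\geq x+3$ shows that in each case the first-$h$ restriction contains both a $0$ and a $1$; one then sets $\gamma:=\delta_x$. If $\tilde\delta_x=U$ (that is, $\delta_x=\rho_0$), the same inspection applied to $\delta_{x+1}$ provides the seed. In the residual subcase $\tilde\delta_x=\tilde\delta_{x+1}=U$, Table~\ref{LRBO} forces $(\tilde\alpha_x,\tilde\alpha_{x+1},\tilde\alpha_{x+2})$ into one of two alternating $\{O,V_*\}$-patterns; one then either takes a product such as $\delta_x\delta_{x+1}$ (whose vector is the complement of $\tilde\delta_{x+1}$), or descends to the first index $x+k$ inside the interval $A$ with $\tilde\delta_{x+k}\neq U$, checking that it remains accessible in $G_{<h}$. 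Such an index exists, since otherwise the only values taken by the $\delta_j$ would be $\{id,U\}$, contradicting ${\rm Ker}(f)\cong (\Cyc_2)^{n/2-1}$.

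With a seed $\gamma$ in hand, any two elements $v,v'$ of its $G_{<h}$-orbit in ${\rm Ker}(f)$ agree on the last $n/2-h$ coordinates, so the product $vv'\in G_{<h}\cap {\rm Ker}(f)$ is a nontrivial even-weight vector supported on $\{1,\ldots,h\}$. A routine argument on $\Sym_h$-orbits in $\mathbb{F}_2^h$ then shows that the differences of orbit members of a non-constant first-$h$ restriction generate every weight-$2$ vector supported on $\{1,\ldots,h\}$, and hence the full even-weight subspace of $\mathbb{F}_2^h$, which is exactly the claim. The dual statement follows by reversing the order of the generators. The main obstacle is the degenerate subcase with long consecutive runs of $\tilde\delta_j=U$: there one must chase Table~\ref{LRBO} carefully, exploiting the interval structure of $A$ from Proposition~\ref{More} and the strict inclusion $\langle U\rangle\subsetneq {\rm Ker}(f)$, to guarantee a seed with non-constant first-$h$ restriction inside $G_{<h}$.
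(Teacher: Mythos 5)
Your proof is correct and takes essentially the same route as the paper: pull a nontrivial even kernel element out of $\delta_x$ via Table~\ref{LRBO}, multiply it by a conjugate (the paper uses $\delta_x\delta_x^{\rho_{x+2}}$, after pinning $\tilde\delta_x\in\{L_{x+2},R_{x+2}\}$ by minimality of $x$) to get a weight-two block-fixing element, and then let the $\Sym_h$ block action of $G_{<h}$ generate the full even-weight subspace supported on $B_1,\ldots,B_h$. One remark: your residual subcase $\tilde\delta_x=\tilde\delta_{x+1}=U$ is vacuous, because by Proposition~\ref{More} $n/2$ is odd and ${\rm Ker}(f)\cong(\Cyc_2)^{n/2-1}$ is the even-weight module, so $U\notin{\rm Ker}(f)$; this one-line parity observation replaces the loose chase you sketch there (whose final ``contradiction'' is not actually justified as stated), and with it your argument is complete.
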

\begin{proof}
Suppose first that $x>1$. As in the previous proof we have,
$$\tilde{\delta}_{x-1}=\mathcal{O}\wedge \tilde{\delta}_x\neq \mathcal{O}  \Rightarrow  \tilde{\alpha}_x\in\{\mathcal{O},\mathcal{V}_{x-1}\} \wedge \tilde{\alpha}_{x+1}\notin\{\mathcal{O},\mathcal{V}_x\} \Rightarrow  \tilde{\delta}_x\in\{\mathcal{L}_{x+2}, \mathcal{R}_{x+2}\}$$
In any case $\delta_x\delta_x^{\rho_{x+2}}$ is the $2$-transposition swapping the points inside the blocks $B_{x+2}$ and $B_{x+3}$.
Hence  $G_{<x+3}$ contains all even permutations fixing $B_1,\ldots, B_{x+3}$. 

Consequently for  $h\geq x+3$,  $G_{<h}$ contains all even permutations fixing $B_1,\ldots, B_h$.  

Suppose that $x=1$. As, by Proposition~\ref{More} $\tilde{\rho}_0=\mathcal{U}$, we must have $\tilde{\delta}_1=\mathcal{R}_3$, hence $\delta_1\delta_1^{\rho_2}=(3,4)(5,6)$ hence we get the same result as for $x>1$.
\end{proof}

\begin{prop}\label{not3}
   Let $r\geq n/2\geq 7$, $|R\cup C|=1$ and $\langle L \rangle \ncong \Sym_{n/2}$.
  Then $\{j\in\{1,\ldots r-2\}\,|\, \delta_j\neq id\}=\{x,x+1\}$ for some $i\in\{1,\ldots,r-2\}$.
\end{prop}
\begin{proof}
Suppose that $x=\mbox{min}\{j\in\{1,\ldots r-2\}\,|\, \delta_j\neq id\}$ and $y=\mbox{max}\{j\in\{1,\ldots r-2\}\,|\, \delta_j\neq id\}$ and that $y>x+1$. Then, by Lemma~\ref{Lnot3} $G_{<x+3}$ contains all even permutations fixing $B_1,\ldots, B_{x+3}$. 
But also, as $x<y-1$, $G_{>x}$ contains all even permutations fixing $B_{x+1},\ldots, B_r$.  But then $G_{>x}\cap G_{<x+3}$ is not a dihedral group, contradicting the intersection property.
Hence $y=x+1$, as wanted.
\end{proof}

In what follows let $x$ be the index determined in the previous proposition, meaning that, $\delta_x$ and $\delta_{x+1}$ are the unique nontrivial $\delta$'s.


\begin{prop}\label{k=2notK=2}
 Let $r\geq n/2\geq 7$, $|R\cup C|=1$ and $\langle L \rangle \ncong \Sym_{n/2}$.
 Let $f : \langle L\rangle \rightarrow \Sym_{n/2}$ be the homomorphism induced by the action of $\langle L\rangle$ on the $n/2$ blocks.
  If ${\rm Ker}(f) = \Cyc_2^{n/2-1}$ then $\mathcal{G}$ is, up to duality,  one of the graphs of Table~\ref{KnotC2}.
\end{prop}
\begin{proof}
The following arguments make systematic use of Table~\ref{LRBO}.
Suppose first that $x\notin\{1,r-3\}$.

As $\tilde{\delta}_{x-1}=\tilde{\delta}_{x+2}=\mathcal{O}$, we must  have $\tilde{\alpha}_x\in \{\mathcal{O}, \mathcal{V}_{x-1}\}$ and $\tilde{\alpha}_{x+2}\in\{ \mathcal{O}, \mathcal{V}_{x+1}\}$. Moreover,
$$\tilde{\delta}_x\in\{ \mathcal{L}_{x+2}, \mathcal{R}_{x+2}\}\mbox{
and }
\tilde{\delta}_{x+1}\in\{\mathcal{L}_x, \mathcal{R}_x\}.$$

Let us consider the following cases separately: (A) \( x \) is even; and (B) \( x \) is odd.

Notice that $\mathcal{L}_i$ is even weight vector if and only if $i$ is even, while $\mathcal{R}_i$ is an even weight vector if and only if $i$ is odd.

(A) In this case  $\tilde{\delta}_x=\mathcal{L}_{x+2}$ and $\tilde{\delta}_{x+1}=\mathcal{L}_x$.

\[\begin{array}{cl}
\tilde{\delta}_x=\mathcal{L}_{x+2}&\Rightarrow (\tilde{\alpha}_x,\tilde{\alpha}_{x+1})\in \{( \mathcal{O},\mathcal{L}_x), (\mathcal{V}_{x-1}, \mathcal{R}_{x+2})\}\\
\end{array}\]

If $(\tilde{\alpha}_x,\tilde{\alpha}_{x+1})= (\mathcal{O},\mathcal{L}_x)$ we get the following.
\[\left\{\begin{array}{l}
\tilde{\alpha}_i=\mathcal{O}, \;i\neq x+1\\
\tilde{\alpha}_{x+1}=\mathcal{L}_x\\
\end{array}\right.\]

If $(\tilde{\alpha}_x,\tilde{\alpha}_{x+1})=(\mathcal{V}_{x-1}, \mathcal{R}_{x+2})$ we get the following.
\[\left\{\begin{array}{l}
\tilde{\alpha}_i=\mathcal{V}_{i-1}, \;i\neq x+1\\
\tilde{\alpha}_{x+1}=\mathcal{R}_{x+2}\\
\end{array}\right.\]

Then, when $x$ is even, we get two possibilities for  $\mathcal{G}$ corresponding to graphs (25) and (26) of  Table~\ref{KnotC2}.\\

(B) In this case  $\tilde{\delta}_x=\mathcal{R}_{x+2}$ and $\tilde{\delta}_{x+1}=\mathcal{R}_x$.
\[\begin{array}{cl}
\tilde{\delta}_x=\mathcal{R}_{x+2}&\Rightarrow (\tilde{\alpha}_x,\tilde{\alpha}_{x+1})\in \{( \mathcal{O},\mathcal{R}_{x+2}), (\mathcal{V}_{x-1}, \mathcal{L}_x)\}\\
\end{array}\]

If $(\tilde{\alpha}_x,\tilde{\alpha}_{x+1})= (\mathcal{O},\mathcal{R}_{x+2})$ we get the following.
\[\left\{\begin{array}{l}
\tilde{\alpha}_i=\mathcal{O}, \;i\neq x+1\\
\tilde{\alpha}_{x+1}=\mathcal{R}_{x+2}\\
\end{array}\right.\]

If $(\tilde{\alpha}_x,\tilde{\alpha}_{x+1})=(\mathcal{V}_{x-1}, \mathcal{L}_x)$ we get the following.
\[\left\{\begin{array}{l}
\tilde{\alpha}_i=\mathcal{V}_{i-1}, \;i\neq x+1\\
\tilde{\alpha}_{x+1}=\mathcal{L}_x\\
\end{array}\right.\]

Then, when $x$ is odd, we get two possibilities for  $\mathcal{G}$ corresponding to graphs (27) and (28) of  Table~\ref{KnotC2}.

Suppose that $x=1$. By Table~\ref{LRBO}, either  $\tilde{\delta}_1=\mathcal{R}_3$ and $\tilde{\delta}_2\in\{\mathcal{L}_1,\mathcal{R}_1,\mathcal{L}_4,\mathcal{T}_1, \mathcal{R}_4\}$. As $\mathcal{L}_1$ and $\mathcal{R}_4$ are odd permutations these can be excluded from the set of possibilities for $\tilde{\delta}_2$.
If $\tilde{\delta}_2\in\{\mathcal{T}_1,\mathcal{L}_4\}$ then $\tilde{\alpha}_3\notin\{\mathcal{O}, \mathcal{V}_2\}$, hence $\tilde{\delta}_3\neq \mathcal{O}$, a contradiction. This gives only one possibility  which is, $\tilde{\delta}_2=\mathcal{R}_1$.
Consequently $(\tilde{\alpha}_2,\tilde{\alpha}_3)\in \{(\mathcal{R}_3,\mathcal{O}),(\mathcal{L}_1,\mathcal{V}_2)\}$. If $(\tilde{\alpha}_2,\tilde{\alpha}_3)= (\mathcal{R}_3,\mathcal{O})$ then

\[\left\{\begin{array}{l}
\tilde{\alpha}_i=\mathcal{O}, \;i\neq 2\\
\tilde{\alpha}_2=\mathcal{R}_3\\
\end{array}\right.\]

 If $(\tilde{\alpha}_2,\tilde{\alpha}_3)= (\mathcal{L}_1,\mathcal{V}_2)$ then

\[\left\{\begin{array}{l}
\tilde{\alpha}_1=\mathcal{R}_2\\
\tilde{\alpha}_2=\mathcal{L}_1,\\
\tilde{\alpha}_i=\mathcal{V}_{i-1},\; i\geq 3\\
\end{array}\right.\]

Then, when $x=1$ (which is odd), we get two possibilities for  $\mathcal{G}$ corresponding to graphs (27) and (28) of  Table~\ref{KnotC2}.
For $x=r-3=n/2-3$ similar arguments give the possibilities (25) and (26) of  Table~\ref{KnotC2}.

\end{proof}


\section{Case: Imprimitive groups with two blocks.}\label{m=2}

In what follows let  $R$, $L$, $C$, $k$ and $m$ be as defined before and let $\{p_1,\dots, p_{r-1}\}$ be the Schl\"{a}fli symbol of $\Gamma$.
Now we deal with the case $m=2$  where $L$ is a singleton.
Let $\mathcal{B}=\{B_1,B_2\}$ denote the block system.
By Proposition~\ref{maximprim} we may assume that $r=n/2$.

\begin{prop}\label{R=0}
Let $r=n/2$. If  $|R|=0$, then $G\cong \Cyc_2\times \Sym_{n/2}$ and, up to duality, $p_1=2$ and $\Gamma_0$ is the automorphism group of a polytope of rank $(n/2-1)$ for $\Sym_{n/2}$.  If $n/2\geq 7$ then the  Schl\"{a}fli symbol of $\Gamma$ is $\{2,3,\ldots,3\}$ and $G$ admits one of the following two permutation representations:
$$\xymatrix@-.8pc{
*+[o][F]{}\ar@{-}[dd]_0\ar@{-}[rr]^1&&*+[o][F]{}\ar@{-}[dd]_0\ar@{-}[rr]^2&&*+[o][F]{}\ar@{-}[dd]_0\ar@{.}[rr] &&*+[o][F]{} \ar@{-}[dd]^0\ar@{-}[rr]^{r-1}&& *+[o][F]{}   \ar@{-}[dd]^0 \\
&&&&&&&&\\
*+[o][F]{}\ar@{-}[rr]_1&&*+[o][F]{}\ar@{-}[rr]_2&&*+[o][F]{}\ar@{.}[rr]&&*+[o][F]{} \ar@{-}[rr]_{r-1}&& *+[o][F]{} }\;\mbox{ or }\;
\xymatrix@-.8pc{
*+[o][F]{}\ar@{-}[dd]_{I_1}\ar@{-}[rr]^1&&*+[o][F]{}\ar@{-}[dd]_{I_{1,2}}\ar@{-}[rr]^2&&*+[o][F]{}\ar@{-}[dd]_{I_{2,3}}\ar@{.}[rr]  &&*+[o][F]{} \ar@{-}[dd]^{I_{r-3,r-2}}\ar@{-}[rr]^{r-2}&&*+[o][F]{} \ar@{-}[dd]^{I_{r-2,r-1}}\ar@{-}[rr]^{r-1}&& *+[o][F]{}   \ar@{-}[dd]^{I_{r-1}} \\
&&&&&&&&\\
*+[o][F]{}\ar@{-}[rr]_1&&*+[o][F]{}\ar@{-}[rr]_2&&*+[o][F]{}\ar@{.}[rr]&&*+[o][F]{} \ar@{-}[rr]_{r-2}&&*+[o][F]{} \ar@{-}[rr]_{r-1}&& *+[o][F]{} }$$

 In the  first graph the blocks are the $G_0$-orbits while in the second graph each edge connects vertices in different blocks.
\end{prop}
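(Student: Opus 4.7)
The plan is to proceed in four stages, exploiting the fact that the hypothesis $|R|=0$ makes the distinguished generator of $L$ commute with every other generator.

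\emph{Stage 1 (centrality of the $L$-generator).} Since $m=2$ we have $|L|=1$, say $L=\{\rho_i\}$. The hypothesis $|R|=0$ forces every $\rho_j$ ($j\neq i$) to commute with $\rho_i$, and combined with the commuting property of the sggi this yields $\rho_i\in Z(G)$. The intersection property applied to $\{i\}$ and $\{0,\ldots,r-1\}\setminus\{i\}$ gives $\langle\rho_i\rangle\cap G_i=\{1\}$, so $G=\langle\rho_i\rangle\times G_i$.

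\emph{Stage 2 (locating $L$ at an endpoint).} I claim that up to duality $L=\{\rho_0\}$. For each $j\neq i$ set $\bar\rho_j:=\rho_j$ if $\rho_j$ fixes both blocks and $\bar\rho_j:=\rho_i\rho_j$ otherwise; centrality of $\rho_i$ makes each $\bar\rho_j$ a non-identity involution lying in the block-stabilizer $G^+$. Together with $\rho_i$ the $\bar\rho_j$'s generate $G$, so $\langle\bar\rho_j:j\neq i\rangle=G^+$, and pulling the central factor $\rho_i$ out of any putative relation shows that $\{\bar\rho_j:j\neq i\}$ is independent in $G^+$. Any element of $G^+$ trivial on $B_1$ is, via $\rho_i$-conjugation, also trivial on $B_2$, so $G^+$ embeds in $\Sym_{n/2}$; with $n/2-1$ independent involutive generators in degree $n/2$, Theorem~\ref{w1} yields $G^+\cong\Sym_{n/2}$. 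Now assume for contradiction that $0<i<r-1$. Since $|j-k|>1$ whenever $j<i<k$, the commuting property together with the intersection property gives $G_i=G_{<i}\times G_{>i}$ with both factors nontrivial. Tracking the block-action homomorphism $\phi$ through this decomposition and using that $\Sym_{n/2}$ has trivial centre for $n/2\geq 3$ (so admits no nontrivial direct-product decomposition) while $A_{n/2}$ is simple for $n/2\geq 5$ (and has no $\Cyc_2$-quotient for $n/2\in\{3,4\}$), a short case analysis on $\phi(G_{<i})$ and $\phi(G_{>i})$ forces a contradiction, unless an endpoint generator is itself central, in which case $L$ can be relabelled to $\{\rho_0\}$ (or dually $\{\rho_{r-1}\}$). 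In all cases we obtain, up to duality, $L=\{\rho_0\}$, and hence $p_1=|\rho_0\rho_1|=2$.

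\emph{Stage 3 (identifying $\Gamma_0$).} With $L=\{\rho_0\}$, $G_0=\langle\rho_1,\ldots,\rho_{r-1}\rangle$ is a complement of $\langle\rho_0\rangle$ in the direct product $G$, so $G_0\cong G^+\cong\Sym_{n/2}$. Thus $\Gamma_0$ is a string C-group of rank $n/2-1$ with group $\Sym_{n/2}$, and for $n/2\geq 7$ Theorem~\ref{1} identifies it as the $(n/2-1)$-simplex with Schl\"afli symbol $\{3,\ldots,3\}$. Combined with $p_1=2$, this gives the Schl\"afli symbol $\{2,3,\ldots,3\}$ for $\Gamma$ and $G\cong\Cyc_2\times\Sym_{n/2}$.

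\emph{Stage 4 (permutation representation graphs).} The group $G_0\cong\Sym_{n/2}$ acts on the $n$ points of $G$. Its action is either intransitive (with the two blocks as $G_0$-orbits, each carrying the standard simplex representation given by Proposition~\ref{highC}) or transitive (the unique faithful transitive degree-$n$ representation of $\Sym_{n/2}$, guaranteed for $n/2\geq 7$ by Proposition~\ref{2npointsub}, with graph given by Proposition~\ref{2npointsrep}). In each case $\rho_0$ is uniquely determined as the central involution centralizing $G_0$ and swapping the two blocks, producing exactly the two permutation representation graphs of the statement.

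The hard part will be Stage~2: the case analysis showing that the central $L$-generator must actually sit at an endpoint of the string is delicate, since one must keep track of which other $\rho_j$'s swap the blocks and exploit both the non-decomposability of $\Sym_{n/2}$ and the simplicity of $A_{n/2}$.
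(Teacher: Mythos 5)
Your overall architecture is the paper's: your $\bar\rho_j$ are exactly the paper's $\alpha_j=\rho_j\rho_i^{\tau}$, the identification of a $\Sym_{n/2}$ acting on the block (equivalently on the $\langle\rho_i\rangle$-pairs) is the same step, and your Stage 4 split according to whether $G_0$ is intransitive or transitive, using Propositions~\ref{highC}, \ref{2npointsub} and \ref{2npointsrep}, is how the paper reads off the two graphs. The genuine gap is Stage 2, which is the crux of the statement (it is what yields $p_1=2$ and that $\Gamma_0$ has rank $n/2-1$): you do not actually carry out the endpoint localization. The plan you describe is misdirected -- once you know the image is $\Sym_{n/2}$, simplicity of $\Alt_{n/2}$ and a case analysis on $\phi(G_{<i})$, $\phi(G_{>i})$ have no role -- and the escape clause ``unless an endpoint generator is itself central, in which case $L$ can be relabelled'' is both vacuous and not a legitimate move: the element of $L$ must swap $B_1$ and $B_2$, so a central block-fixing endpoint generator could never be chosen as $L$, and in any case no such surviving subcase exists.

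The repair is short and you already have all the ingredients: from $G=\langle\rho_i\rangle\times G_i$ and $G=\langle\rho_i\rangle\times G^{+}$ you get $G_i\cong G^{+}\cong\Sym_{n/2}$; if $0<i<r-1$, the commuting property plus the intersection property give $G_i=G_{<i}\times G_{>i}$ with both factors nontrivial, contradicting the direct indecomposability of $\Sym_{n/2}$ (trivial centre, no complementary pair of nontrivial normal subgroups). Hence $i\in\{0,r-1\}$. The paper avoids the issue differently: since $\rho_i$ is central, $(\bar\rho_j\bar\rho_k)^2=(\rho_j\rho_k)^2$, so the set $\{\bar\rho_j\}$ is itself a string sggi of degree $n/2$ with $n/2-1$ independent involutions, and Corollary~\ref{CCcoro} identifies it with the $(n/2-1)$-simplex; connectedness of the simplex diagram forces $i\in\{0,r-1\}$, and the resulting order-$3$ products of consecutive $\bar\rho_j$'s are then used in the transitive case to show that every generator of $C$ swaps the blocks, which is where the remark that in the second graph every edge joins the two blocks comes from. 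Your uniqueness-of-the-centralizing-involution argument in Stage 4 delivers the two graphs just as well, so with Stage 2 fixed as above the proof goes through.
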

\begin{proof}
In this case $\langle C\rangle$ acts faithfully on the pairs of points swapped by the element of $L$.
 Hence $|C|\leq n/2-1$. As $r=n/2$, $|C|=n/2-1$. 
 
Suppose first that $\langle C\rangle$ is intransitive. Then the permutation representation graph of $\langle C\rangle$ is given by two copies of one of the two first graphs given in Proposition~\ref{highC}. Hence, for $n/2\geq 7$, $\Gamma$  has the first permutation representation graph given in this proposition.

Let us now assume that $\langle C\rangle$ is transitive. Let $L=\{\rho_i\}$.
For $j\neq i$, let $\alpha_j=\rho_j\rho_i^{\tau}$ where $\tau=1$ if $B_1=B_2\rho_j$ and $\tau=0$ if $\rho_j$ fixes the blocks.
The set $\Lambda:=\{\alpha_j\,|\,i\in \{0,\ldots,r-1\}\}$ is independent, indeed if $\alpha_k\in\langle \alpha_j\,|\, j\neq k\rangle$, then $\rho_k\in G_k$, a contradiction.
Moreover $\langle \Lambda \rangle$ acts faithfully on  the $n/2$ pairs of points that are swapped by $\rho_i$, and $|\Lambda|=n/2-1$.
Hence by Corollary~\ref{CCcoro}, as $n/2\geq 7$, $\langle \Lambda\rangle$ is a string C-group having the first  permutation representation graph given in Proposition~\ref{highC}.
Particularly $\langle \Lambda\rangle\cong \Sym_{n/2}$, which implies that $i\in\{0,r-1\}$
and the order of the product of consecutive $\alpha_j$'s is $3$. 
Consider $i=0$ and let $j\in\{1,\ldots,r-1\}$ such that $\rho_j\in C$ swaps the blocks. 
Suppose that the consecutive generator $\rho_k\in C$, with $k\in\{j-1,j+1\}$ does not swap the blocks. As $(\alpha_j\alpha_k)^3 = id$, by the definition of $\alpha_j$,  $(\rho_j\rho_0\rho_k)^3 = id$. If $(\rho_j\rho_k)^3=id$, then we have $\rho_0 = id$, a contradiction. 
If $(\rho_j\rho_k)^3 \neq id$, then we have $\rho_0 = (\rho_j\rho_k)^3$, i.e. $\rho_0\in \langle C\rangle$, a contradiction. 
As $\langle C\rangle$ is transitive, hence the consecutive generator must also swap the blocks, implying that all generators of $C$ swap the blocks and the product of consecutive generators must also be $3$. 
The case when $i=r-1$ is equivalent. This gives, up to duality, the second possibility given in the statement of this proposition.
\end{proof}

\begin{prop}\label{m=2(1)}
  Let  $r=n/2\geq 7$. If  $\langle C\cup L\rangle$ is transitive  and $|R|\neq 0$ then $\langle C\rangle$ is transitive.
  \end{prop}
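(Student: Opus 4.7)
The plan is to argue by contradiction: suppose $\langle C\rangle$ is intransitive. The first observation is that $\alpha\in L$ centralises every generator of $C$, so it permutes the $\langle C\rangle$-orbits as an involution; together with the transitivity of $\langle C\cup L\rangle$ and $\alpha^{2}=1$ this forces $\langle C\rangle$ to have exactly two orbits $O_1,O_2$ of size $n/2$, interchanged by $\alpha$. Every element of $\langle C\rangle$ commutes with $\alpha$, so an element acting trivially on $O_1$ also acts trivially on $O_2=O_1\alpha$, and hence the restriction map $\pi_1:\langle C\rangle\to\Sym(O_1)$ is injective.

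Set $H:=\pi_1(\langle C\rangle)$ and $\tilde C:=\pi_1(C)$. The independence of $C\subseteq S$ and the intersection property of $(G,S)$ transport across the injective $\pi_1$, giving $(H,\tilde C)$ the structure of a transitive string C-group of degree $n/2$ and rank $|C|\ge n/2-3$. For $n/2\ge 7$, Propositions~\ref{prim} and~\ref{maximprim} together with Theorem~\ref{maxan} rule out every possibility for $H$ other than $\Sym_{n/2}$.

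If the labels of $L\cup R$ form a block strictly interior to $\{0,\ldots,r-1\}$, then $\tilde C$ splits into two subsets $\tilde C_L$ and $\tilde C_R$ that commute element-wise (by the commuting property) and have trivial intersection of their generated subgroups (by the intersection property), so $H=\langle\tilde C_L\rangle\times\langle\tilde C_R\rangle$. Since $\Sym_{n/2}$ has no nontrivial direct product decomposition and $\pi_1$ is injective, one of $C_L$, $C_R$ would have to be empty, contradicting the assumed interior position of $L\cup R$. Hence $L\cup R$ lies at a boundary, and up to duality $L\cup R\subseteq\{\rho_0,\rho_1\}$.

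When $L=\{\rho_1\}$ and $\rho_0\in R$, the commuting property gives $[\rho_0,\rho_j]=1$ for all $j\ge 2$, so $\rho_0$ centralises $\langle C\rangle\cong\Sym_{n/2}$. If $\rho_0$ fixes each orbit, then $\rho_0|_{O_i}$ centralises $\Sym(O_i)$, which is trivial for $|O_i|\ge 3$, forcing $\rho_0=1$; if $\rho_0$ swaps the two orbits, then the same centraliser calculation applied to $\rho_0\alpha$ gives $\rho_0=\alpha$. Both conclusions contradict $\rho_0\in R\subseteq S\setminus(L\cup C)$. The residual case $L=\{\rho_0\}$, $\rho_1\in R$ is the main obstacle: here $\rho_1$ only centralises the tail $\langle\rho_3,\ldots,\rho_{r-1}\rangle$, whose image under $\pi_1$ is a copy of $\Sym_{n/2-4}$ fixing a distinguished four-element subset (as can be read off the perm rep graph of Proposition~\ref{highC}), and one must use the explicit structure of this centraliser together with the orders of $\rho_0\rho_1$ and $\rho_1\rho_2$ coming from the Schl\"{a}fli symbol to rule out every admissible $\rho_1$.
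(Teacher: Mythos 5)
Your opening steps are correct and follow essentially the paper's route: the element of $L$ swaps the two $\langle C\rangle$-orbits, $\langle C\rangle$ acts faithfully and transitively on a set of $n/2$ objects (you restrict to one orbit, the paper uses the $n/2$ pairs swapped by the element of $L$ -- the same thing), the rank bounds from Theorem~\ref{maxan} and Propositions~\ref{maxprim} and~\ref{maximprim} leave only $\Sym_{n/2}$, direct indecomposability of $\Sym_{n/2}$ forces the labels of $C$ to be consecutive, and the centraliser-of-a-transitive-$\Sym_{n/2}$ argument kills every configuration in which an element of $R$ commutes with all of $C$. Two small remarks: your claim that, up to duality, $L\cup R\subseteq\{\rho_0,\rho_1\}$ omits the $|R|=2$ configuration $\{\rho_0,\rho_1,\rho_2\}$ with $L=\{\rho_1\}$, although your case ``$L=\{\rho_1\}$, $\rho_0\in R$'' happens to cover it; and at $n/2=8$ Proposition~\ref{maximprim} still admits the rank-$5$ imprimitive group $2^4{:}\Sym_3{:}\Sym_3$, so ``rule out every possibility other than $\Sym_{n/2}$'' needs a word there rather than being automatic.

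The genuine gap is the last case, $L=\{\rho_0\}$, $R=\{\rho_1\}$, $C=\{\rho_2,\ldots,\rho_{r-1}\}$, which you explicitly leave open (``one must use the explicit structure of this centraliser \ldots to rule out every admissible $\rho_1$''). This is precisely where the paper has to do its real work, and the sketch you give would not go through as stated: deleting $\rho_2$ from $C$ does not leave a copy of $\Sym_{n/2-4}$ fixing four points; in the degree-$n/2$ action the graph of $C$ is the rank-$(n/2-2)$ graph of Proposition~\ref{highC} (it has a repeated label), and removing the relabelled ``$0$''-edge leaves $\Sym_2\times\Sym_{n/2-2}$, whose centraliser in $\Sym_n$ is far too large for a purely abstract centraliser computation, together with the two Schl\"afli entries, to pin down $\rho_1$. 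What the paper actually does is draw the permutation representation graph of $\langle C\cup L\rangle$ on all $n$ points (two copies of the rank-$(n/2-2)$ graph matched vertex-by-vertex by $0$-edges), single out the four points $a,b,c,d$ carried by the doubled label, show that if $\rho_1$ moved any point outside $\{a,b,c,d\}$ then the commuting relations force $\rho_1$ to be fixed-point-free and to commute with $\rho_0$ (contradicting $\rho_1\in R$), and then check that the only actions of $\rho_1$ supported on $\{a,b,c,d\}$ compatible with commuting with $\rho_3$, with the absence of a double $\{0,1\}$-edge, and with the earlier fact that no element of $R$ can preserve the two $\langle C\rangle$-orbits, again make $\rho_1$ commute with $\rho_0$, a contradiction. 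Some argument of this kind is indispensable; without it the proposition is not proved.
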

  \begin{proof}
  Suppose that $\langle C\cup L\rangle$ is transitive but $\langle C\rangle$ is intransitive.
  As $\langle C\rangle$ is a normal subgroup of $\langle C\cup L\rangle$, the $\langle C\rangle$-orbits are swapped by the element of $L$. 
  Particularly $\langle C\rangle$ must have exactly two orbits. 
  
  Notice that the $\langle C\rangle$-orbits do not need to be $B_1$ and $B_2$. Indeed the elements of $C$ do not need to fix the blocks of $\mathcal{B}$ and 
   the elements of $R$ do not need to preserve the $\langle C\rangle$-orbits.  
  Nevertheless if $L=\{\rho_i\}$ and $\delta\rho_i$ has a fixed point, then $B_1\delta=B_2$, which implies that  $\delta$ is fixed-point-free.
  
  The group generated by $C$ acts faithfully on the pairs of points swapped by the element of $L$.
  Thus $\langle C\rangle$ is embedded into $\Sym_{n/2}$. Let us deal separately with the cases $\langle C\rangle\cong \Sym_{n/2}$ and $\langle C\rangle\not\cong \Sym_{n/2}$  .

  Case 1:  $\langle C\rangle\cong \Sym_{n/2}$. 
  
  In this case an element of $R$ cannot fix the $\langle C\rangle$-orbits, if  $\delta\in R$ does fix the $\langle C\rangle$-orbits, then $\delta\delta^{\rho_i}\in \langle C\rangle$, a contradiction.

  Let us prove that in this case $|R|\neq 2$. Suppose the contrary, that $|R|=2$. 
  As the elements of  $C$ must be consecutive (because $\langle C\rangle\cong \Sym_{n/2}$),  up to duality, we may assume that $L=\{\rho_1\}$, $R=\{\rho_0,\rho_2\}$.
  In this case $\rho_0$ centralizes $\langle C\rangle$. 
  As $\rho_0\in R$,  as observed previously, $\rho_0$ cannot fix the $\langle C\rangle$-orbits. 
  Since $\rho_1$ also commutes with $\langle C\rangle$ and swaps the $\langle C\rangle$-orbits, there is only one possibility for $\rho_0$, which is $\rho_0=\rho_1$, a contradiction.
  Thus $|R|= 1$ and the element of $R$ cannot commute with all the elements of $C$. As $r= n/2$, we must have $|C|= n/2-2$.

  Up to duality, we may consider $L=\{\rho_0\}$, $R=\{\rho_1\}$ and $C=\{\rho_2,\ldots, \rho_{r-1}\}$.
  
  As $n/2\geq 7$, the permutation representation graph of $\langle C\cup L\rangle$ is, by Proposition~\ref{highC}, as follows.
  $$ \xymatrix@-1.5pc{
  *+[o][F]{a}\ar@{-}[dd]_0\ar@{-}[rr]^3&&*+[o][F]{b}\ar@{-}[dd]_0\ar@{-}[rr]^2&&*+[o][F]{}\ar@{-}[dd]_0\ar@{-}[rr]^3&&*+[o][F]{}\ar@{-}[dd]_0\ar@{-}[rr]^4&&*+[o][F]{}\ar@{-}[dd]_0\ar@{.}[rr] &&*+[o][F]{} \ar@{-}[dd]^0\ar@{-}[rr]^{r-1}&& *+[o][F]{}   \ar@{-}[dd]^0 \\
  &&&&&&&&&&&&\\
  *+[o][F]{c}\ar@{-}[rr]_3&&*+[o][F]{d}\ar@{-}[rr]_2&&*+[o][F]{}\ar@{-}[rr]_3&&*+[o][F]{}\ar@{-}[rr]_4&&*+[o][F]{}\ar@{.}[rr]&&*+[o][F]{} \ar@{-}[rr]_{r-1}&& *+[o][F]{} }$$
  
  Let $\{a,b,c,d\}$ be as above.
  If $\rho_1$ acts nontrivially on $\{1,\ldots,n\}\setminus\{a,b,c,d\}$ then, $\mathcal{G}$ has $n/2-2$  $\{0,1\}$-edges, which implies that  $B_1\rho_1=B_2$.  Particularly $\rho_1$ is fixed-point-free. Then we get that $\rho_1$ commutes with $\rho_0$,  contradicting the definition of $C$. Thus  $\rho_1$ fixes $\{1,\ldots,n\}\setminus\{a,b,c,d\}$.
  Now to avoid a double $\{0,1\}$-edge and $\rho_1$ fixing the $\langle C\rangle$-orbits, let  $a\rho_1=d$, then, as $\rho_1$ and $\rho_3$ commute,  $b\rho_1=c$. But then $\rho_1$ and $\rho_0$ also commute, a contradiction.

  Case 2: $\langle C\rangle\not\cong\Sym_{n/2}$. 
  
  In this case, by Theorem~\ref{maxan} and Propositions~\ref{prim} and \ref{maximprim}, $|C|\leq (n/2)/2+1$.
  But as $r\leq |C|+3$ and $r\geq n/2$, we get that $n/2\leq 8$ (and $n/2\geq 7$). Hence $n\in\{7,8\}$ which implies that  $|C|\leq (n/2)/2$. Consequently, $n/2\leq  |C|+3\leq n/4+3$ gives $n\leq 12$, a contradiction.

  \end{proof}
  
  \begin{lemma}\label{CLprim}
  Let $r\geq n/2-2$, $n/2\geq 7$ and $n\neq 16$. Suppose that $\Phi=(H,\{\alpha_0, \ldots, \alpha_{r-1}\})$ is a string C-group satisfying the following:
  \begin{itemize}
  \item $H_0$ is transitive; 
  \item $(\alpha_0\alpha_1)^2=id$; and 
  \item  $H$ has a block system $\mathcal{B}=\{B_1,B_2\}$ with $B_1=B_2\alpha_0$.
  \end{itemize}
  Then $H$ has a primitive action on the $\langle \alpha_0\rangle$-orbits.   
  \end{lemma}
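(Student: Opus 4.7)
The plan is to turn the $\langle\alpha_0\rangle$-orbits into a genuine block system of $H$, and then argue that an imprimitive action on them would force an additional nontrivial block system of $H$, contradicting the rank inequality $r\geq n/2-2$.

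First I would verify that $\alpha_0$ is central in $H$. The commuting property of the sggi already gives $(\alpha_0\alpha_j)^2=\mathrm{id}$ for every $j\geq 2$, and the standing hypothesis $(\alpha_0\alpha_1)^2=\mathrm{id}$ delivers the remaining commutation. Hence $\langle\alpha_0\rangle\triangleleft H$ and its orbits form a block system for $H$; since $\alpha_0$ interchanges $B_1$ and $B_2$, every $\langle\alpha_0\rangle$-orbit has size two and contains one point of $B_1$ together with one point of $B_2$, giving exactly $n/2$ orbits on which $H$ genuinely acts.

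Assuming for contradiction that this induced action is imprimitive, I would choose a nontrivial block system $\mathcal{C}$ on the $n/2$ orbits with $m''k''=n/2$ and $m'',k''\geq 2$. Pulling $\mathcal{C}$ back to $\{1,\ldots,n\}$ yields a block system $\tilde{\mathcal{C}}$ of $H$ with $m''$ blocks of size $2k''$; because every block of $\tilde{\mathcal{C}}$ contains $k''$ points of each $B_i$, the meet $\mathcal{B}\wedge\tilde{\mathcal{C}}$ is also a bona fide block system of $H$, this time with $2m''$ blocks of common size $k''$.

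I would then apply the rank bound of Proposition~\ref{LCR}(h) to an appropriate block system. When $m''\geq 3$, that bound applied to $\tilde{\mathcal{C}}$ gives $r\leq m''+2k''-1$; coupled with $2m''k''=n$ and $r\geq n/2-2$, this forces $n$ into a small range. When $m''=2$, the refinement $\mathcal{B}\wedge\tilde{\mathcal{C}}$ has four blocks of size $n/4$, and the same proposition yields $r\leq n/4+3$, so $n\leq 20$. The hypothesis of~(h) that the labels of $L$ form an interval is either satisfied directly or, if it fails, part~(e) of the same proposition gives $r<n/2$, pinning $r\in\{n/2-2,n/2-1\}$, which can then be excluded by a direct inspection using the intersection property of $\Phi$ together with the central involution $\alpha_0$.

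The main difficulty, and the reason for the exclusion of $n=16$, lies in the narrow residual band of small degrees $n\in\{14,18,20\}$ (together with the genuinely excluded $n=16$). For each such $n$ I would analyse the permutation representation graph of $\Phi$ directly: the centrality of $\alpha_0$, combined with the refined block system $\mathcal{B}\wedge\tilde{\mathcal{C}}$, rigidly constrains the placement of the $\alpha_i$-edges, and an explicit case analysis parallel in spirit to the one carried out in Section~\ref{k=2} should eliminate all remaining configurations, with $n=16$ precisely the configuration where this program fails.
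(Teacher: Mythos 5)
Your opening is the same as the paper's (the commuting property plus $(\alpha_0\alpha_1)^2=\mathrm{id}$ make $\alpha_0$ central, so its orbits, each meeting $B_1$ and $B_2$ in one point, form a block system of size-$2$ blocks), and your overall strategy (a rank inequality forcing a short list of small parameters, then case elimination) is also the paper's. But the execution has two genuine gaps. First, Proposition~\ref{LCR} is stated for $L$, $C$, $R$ taken with respect to a \emph{maximal} block system, i.e.\ one on which the block action is primitive; this is what underlies the bounds $|C|\leq k-1$ and $|R|\leq 2$ in its proof. You apply part~(h) to the pulled-back system $\tilde{\mathcal{C}}$, which need not be maximal, and to the meet $\mathcal{B}\wedge\tilde{\mathcal{C}}$, whose block action is \emph{never} primitive (it preserves the two coarser systems; for $m''=2$ the action on the four blocks lies in the Klein four-group, exactly the situation of Lemma~\ref{C4}, not of Proposition~\ref{LCR}(h)). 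Hence the inequalities $r\leq m''+2k''-1$ and $r\leq n/4+3$ are unjustified. The paper avoids this by choosing the block system $\mathcal{V}$ with $k$ maximal (so the block action is primitive) and, crucially, by sharpening $|C|\leq k-1$ to $|C|\leq k/2$ using $\alpha_0\in C$; that sharper bound is what yields $(k-2)(m-1)\leq 8$ and a genuinely short list, namely $(k,m)\in\{(4,5),(6,3),(10,2)\}$ once $n\neq 16$ is imposed. Your weaker bound, even if it were licensed, leaves a larger residue than you state: your own arithmetic gives $n\in\{16,18,20,24\}$ for $m''\geq 3$ and $n\leq 20$ for $m''=2$, not the band $\{14,18,20\}$ you quote.

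Second, and more seriously, the elimination of the residual parameters is the bulk of the paper's proof and you leave it entirely open: ``an explicit case analysis \dots should eliminate all remaining configurations'' is a promissory note, whereas the paper eliminates $(4,5)$, $(6,3)$ and $(10,2)$ by specific structural arguments (redefining $L$ to shrink $|R|$, forcing $\alpha_6=\alpha_0$ or $\alpha_7$ to centralize $\langle C\rangle$, and the fact that $\Cyc_2^2\times\Sym_5$ has minimal faithful degree larger than $10$). The same issue affects your fallback when the labels of $L$ are not an interval: Proposition~\ref{LCR}(e) only gives $r<n/2$, which is perfectly compatible with the hypothesis $r\geq n/2-2$ of this lemma, so ``excluded by a direct inspection'' is again an unproven step; the paper instead invokes the quantitative bound of part~(d), which is strong enough to contradict $r\geq n/2-2$. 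As it stands, your argument establishes the easy reduction but not the lemma.
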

  \begin{proof}
  As  $(\alpha_0\alpha_1)^2=id$,  $\alpha_0$ is a central involution. 
  As in addition $H_0$ is transitive, the $\langle \alpha_0\rangle$-orbits  form a block system for $H$.
  Suppose that the action of $H$ on the $\langle \alpha_0\rangle$-orbits is imprimitive.
  Then  there exist a block system $\mathcal{V}$ with $m$ blocks of size $k$  such that $H\leq \Sym_k\wr \Sym_m$ with $n=km$ and such that $\alpha_0$ fixes the blocks. 
 It follows that $k$ is even and $k\geq 4$. 
  Let us also consider $\mathcal{V}$, with $k$ being maximal, that is, such that the action of $H$ on  $\mathcal{V}$ is primitive.
  
  Now let $L$ be a subset of $\{\alpha_0, \ldots, \alpha_{r-1}\}$ generating independently the action on the $m$ blocks. 
    As  $\langle L\rangle$ has a primitive action on the $m$ blocks, hence by Proposition~\ref{LCR}(d), as $r\geq n/2-2$, the elements of $L$ are consecutive.
  Let $C$ be the subset of $\{\alpha_0, \ldots, \alpha_{r-1}\}$ that commute with all the elements of $L$ and $R$ be the remaining generators of $H$. Notice that  $\alpha_0\in C$ and $|R|\leq 2$.
  
   Let $\bar{\alpha}_i$, for $i\neq 0$,  be the action of $\alpha_i$ on the $\langle \alpha_0\rangle$-orbits. 
   The set $\{\bar{\alpha}_i, i=1,\ldots,r-1\}$ is independent (similarly to the set $\Lambda$ that was considered in the proof of Proposition~\ref{R=0}).
   Let $\bar{L}=\{\bar{\alpha_i}\,|\, \alpha_i\in L\}$, $\bar{R}=\{\bar{\alpha_i}\,|\, \alpha_i\in R\}$ and $\bar{C}=\{\bar{\alpha_i}\,|\, \alpha_i\in C\setminus \{\alpha_0\}\}$.
   
   We claim that $|C|\leq k/2$. Indeed if $\langle C\rangle$ fixes the blocks  $|\bar{C}|\leq k/2-1$, which implies that  $|C|\leq k/2$.
   If an element of $C$ swaps the blocks, then, as  $\langle L\rangle$ is primitive,  $m=2$. In this case the elements of $C\setminus \{\alpha_0\}$ act independently on the $\langle L\cup\{\rho_0\}\rangle$-orbits, which have exactly four points.
   Hence $|C\setminus \{\alpha_0\}|\leq n/4-1=k/2-1$. Thus we also get what we want, that is $|C|\leq k/2$.
  
  Hence we have the following bound for $r$.
  $$r=|C\cup L\cup R|\leq k/2+m-1+2.$$
  Consequently, $ k/2+m+1\geq km/2-2$, which gives $(k-2)(m-1)\leq 8$.

  As $n/2\geq 7$, $n\neq 16$, $k$ is even and $k\geq 4$ we need only to consider the following possibilities:
 $(k,m)=(4,5)$;  $(k,m)=(6,3)$ or $(k,m)=(10,2)$. Let us analyse each of them separately.
  
   $\bullet\,(k,m)=(4,5)$: In this case $|C|=2$ and $\langle C\rangle$ fixes the blocks. Let $C=\{\alpha_0,\alpha_1\}$.  If $|L|=n/4-1=4$ then $\langle \bar{L}\cup\{\bar{\alpha}_1\}\rangle\cong \Cyc_2\times \Sym_5$ and $\langle \bar{L}\cup\{\bar{\alpha}_1\}\cup \bar{R}\rangle\leq \Cyc_2\wr \Sym_5$. Thus $|R|=|\bar{R}|\leq 1$ which gives $r=|L|+|C|+|R|\leq 4+2+1=7<n/2-2$, a contradiciton. 
  
  $\bullet\,(k,m)=(6,3)$ : In this case, as $r\geq 7$, we must have $|L|=2$, $|C|=3$ and $|R|=2$. Moreover we may assume that $C=\{\alpha_0,\alpha_1,\alpha_2\}$, $R=\{\alpha_3,\alpha_6\}$ and $L=\{\alpha_4,\alpha_5\}$. Let $\mathcal{V}=\{\mathcal{V}_1,\mathcal{V}_2,\mathcal{V}_3\}$ be the block system where $\mathcal{V}_1\alpha_4 = \mathcal{V}_2$ and $\mathcal{V}_2\alpha_5 = \mathcal{V}_3$. Then $\langle C\rangle$ must  fix the blocks of $\mathcal{V}$, moreover the $\langle C\rangle$-orbits are precisely the  blocks of $\mathcal{V}$.
  Then $\langle C\rangle \cong \Cyc_2\times \Sym_3$, and it has two possible permutation representations determined by the transitive and the intransitive representation of  $\Sym_3$ on $6$ points.   
  Now $G_{3,5,6}$ has exactly two orbits, $\mathcal{V}_1\cup \mathcal{V}_2$ and $\mathcal{V}_3$,  one of size $12$ and the other one of size $6$. 
  Thus $\alpha_6$ must fix these two sets. Moreover, if $\mathcal{V}_1\alpha_6 = \mathcal{V}_2$, then we can redefine $L$, say  $L = \{\alpha_5,\alpha_6\}$, and then $|R|<2$, giving $r<n/2-2$, a contradiction.
Hence, $\alpha_6$ must fix each block of $\mathcal{V}$.  Then $\alpha_6$ must swap the blocks of $\mathcal{B}$ which forces the equality $\alpha_6=\alpha_0$, a contradiction.
  
  $\bullet\,(k,m)=(10,2)$: In this case we have $|L|=1$ and, as $r\geq n/2-2=8$, we must  have $|C|=n/4=5$  and $|R|=2$. 
Let $\mathcal{V}=\{\mathcal{V}_1,\mathcal{V}_2\}$ be the block system where the blocks are swapped by the element in $L$. 
 In this case, $\langle C\rangle\cong \Cyc_2\times \Sym_5$ and  $\langle C \cup L\rangle \cong \Cyc_2^2\times \Sym_5$. 
  Then we may assume that all elements of $C$ are consecutive and that  the last generator of $G$ belongs to $R$, that is $\alpha_7\in R$. Furthermore $\alpha_7$ commutes with all the elements of $C$. Particularly $\langle C \cup \{\alpha_{r-1}\}\rangle \cong C_2^2\times \Sym_5$.
 If it swaps the blocks of $\mathcal{V}$ then we can make a different choice for the element of $L$ giving  $|R|<2$ and $r<8$, a contradiction. Thus  $\alpha_7$ fixes the blocks.
 
  First note that  $\langle C \rangle$ and $\rho_7$ cannot  both fix the blocks of $\mathcal{V}$, as in that case we would get an intransitive permutation representation of  $\Cyc_2^2\times \Sym_5$ with two orbits of size $n/2=10$, which is impossible. Indeed it can be checked computationally that the minimal transitive degree of   $\Cyc_2^2\times \Sym_5$  is greater than $10$. Thus $\langle C \rangle$ must be transitive. 
  Hence, one generator of $C\backslash\{\alpha_0\}$ swaps the blocks of $\mathcal{V}$. Then we can consider other elements for $L$ giving  $|C|<5$, and therefore $r<8$, a contradiction.
    
  \end{proof}
  
  \begin{lemma}\label{coroprimitive}
   Let $r\geq n/2\geq 7$.  If  $\langle L \cup C\rangle$ is transitive and $|R|\neq 0$, then $\langle L \cup C\rangle$ has a primitive action on the $\langle \rho_i\rangle$-orbits, where $L=\{\rho_i\}$.
  \end{lemma}
  \begin{proof}
   As $\langle L \cup C\rangle$ is transitive and $|R|\neq 0$, by Proposition~\ref{m=2(1)},  $\langle C\rangle$ is transitive. 
   Suppose first that  $n\neq 16$. Then  by Lemma~\ref{CLprim}, $\langle L \cup C\rangle$ has a primitive action on the $\langle \rho_i\rangle$-orbits, where $L=\{\rho_i\}$.
   Now let $n=16$ and suppose that $\langle L \cup C\rangle$ acts imprimitively on the $\langle \rho_i\rangle$-orbits. The action of  $\langle C\rangle$ on the $\langle \rho_i\rangle$-orbits is faithfull, hence $\langle C\rangle$ is a string C-group representation of a transitive group of degree $8$. Thus by Proposition~\ref{maximprim} $|C|\leq 8/2+1=5$, moreover as $r\geq 16/2=8$ we must have $|C|=5$ and $|R|=2$. Hence  $\langle C\rangle$ is the automorphism group of a polytope with Schl\"{a}fli  symbol $\{3,4,4,3\}$ given in Table~\ref{imprimPolys}. Let $C=\{\rho_0,\ldots,\rho_4\}$, $R=\{\rho_5,\rho_7\}$ and $L=\{\rho_6\}$.  The permutation representation of $G_{5,7}$, can be determined computationally and is as follows:
    \begin{small}
 \[ \begin{array}{ll}
       \rho_0 = ( 1,10)( 2, 9)( 3,12)( 4,11)( 5,16)( 6,15)( 7,14)( 8,13) & \rho_3= ( 1, 9)( 2,10)( 3,13)( 4,14)( 5,15)( 6,16)( 7,11)( 8,12)\\
       \rho_1 = ( 1,10)( 2, 9)( 3,14)( 4,13)( 5,12)( 6,11)( 7,16)( 8,15) & \rho_4 = ( 1, 9)( 2,10)( 3,11)( 4,12)( 5,13)( 6,14)( 7,15)( 8,16)\\
      \rho_2 = ( 1, 3)( 2, 4)( 5, 7)( 6, 8)( 9,11)(10,12)(13,15)(14,16) &  \rho_6= ( 1, 2)( 3, 4)( 5, 6)( 7, 8)( 9,10)(11,12)(13,14)(15,16)
    \end{array}\]
\end{small}
Now $\rho_7$ is an involution commuting with all the elements of $C$.
This gives only one possibility for $\rho_7$ which is $\rho_7=\rho_6$, a contradiction.
   Therefore, $\langle L \cup C\rangle$ has a primitive action on the $\langle \rho_i\rangle$-orbits.
  \end{proof}
  

  \begin{prop}
   Let  $r=n/2\geq 7$.  Then $\langle C\cup L\rangle$ is transitive if and only if $|R|=0$.
   \end{prop}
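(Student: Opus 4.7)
The forward direction is immediate: if $|R|=0$ then $S=C\cup L$, so $\langle C\cup L\rangle=G$ is transitive. For the converse I argue by contradiction, assuming $\langle C\cup L\rangle$ is transitive and $|R|\neq 0$. By Proposition~\ref{m=2(1)}, $\langle C\rangle$ is also transitive, and by Corollary~\ref{coroprimitive}, $\langle L\cup C\rangle$ acts primitively on the $\langle\rho_i\rangle$-orbits. Since $\rho_i$ acts trivially on each such orbit, $\langle C\rangle$ itself is a faithful primitive permutation group of degree $n/2$, and it inherits from $\Gamma$ the structure of a string C-group of rank $|C|=r-1-|R|\geq n/2-3$, using $|R|\leq 2$ from Proposition~\ref{LCR}(g).

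Combining Proposition~\ref{maxprim}, Theorem~\ref{maxan}, and the small-degree entries of Table~\ref{primPolys}, any primitive subgroup of $\Sym_{n/2}$ other than $\Sym_{n/2}$ or $\Alt_{n/2}$ admits string C-group rank strictly less than $n/4$, and $\Alt_{n/2}$ admits rank at most $\lfloor(n/2-1)/2\rfloor$. For $n/2\geq 7$ both bounds contradict $|C|\geq n/2-3$, so $\langle C\rangle\cong\Sym_{n/2}$ in its natural primitive action. Since $\Sym_{n/2}$ is almost simple for $n/2\geq 5$ and hence does not split as a non-trivial direct product, every Schl\"afli entry of the string C-group $\langle C\rangle$ is at least $3$.

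Whenever deleting the labels of $L$ and $R$ from $\{0,\ldots,r-1\}$ leaves an internal gap, i.e.\ two surviving labels $a,b\in C$ with $|a-b|\geq 3$ in $S$ that become consecutive in the re-indexing of $C$, the string property of $S$ forces $(\rho_a\rho_b)^2=1$, producing a Schl\"afli entry of $2$ in $\langle C\rangle$ and contradicting the preceding paragraph. This rules out every configuration except the ``boundary'' ones, in which the labels of $C$ form a contiguous interval; these are precisely the cases where $L$ and $R$ together occupy the first one or two positions of $\{0,\ldots,r-1\}$, or the last one or two.

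Up to duality it suffices to analyse one representative, say $L=\{\rho_0\}$ and $R=\{\rho_1\}$. Since $\langle C\rangle$ is transitive on $n$ points and centralizes $\rho_0$, it must embed as the twisted-diagonal copy of $\Sym_{n/2}$ in $C_G(\rho_0)\cong \Sym_{n/2}\times\Cyc_2\leq \Sym_{n/2}\wr \Sym_2$; the pure-diagonal copy is intransitive, with orbits $B_1,B_2$, and is excluded. By Theorem~\ref{2} and Proposition~\ref{highC}, the projections to $\Sym_{n/2}$ of the generators of $C$ are, up to duality of the rank $n/2-2$ polytope $\{4,6,3,\ldots,3\}$ for $\Sym_{n/2}$, given by $\alpha_0=(2,3)$, $\alpha_1=(1,2)(3,4)$ and $\alpha_k=(k+2,k+3)$ for $k\geq 2$. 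Writing $\rho_1=(\gamma,\delta)\tau^t$ with $\tau$ the swap, non-commutation of $\rho_1$ with $\rho_0$ forces $\gamma\neq\delta$ (when $t=0$) or $\delta=\gamma^{-1}$ with $\gamma^2\neq 1$ (when $t=1$). Imposing the relations $\rho_1\rho_j=\rho_j\rho_1$ for every $\rho_j\in C$ with $j\geq 3$ then forces $\gamma,\delta$ to centralize or be inverted by $\langle(4,5),\ldots,(n/2-1,n/2)\rangle$, a full symmetric group on $\{4,\ldots,n/2\}$. For $n/2\geq 7$ the centralizer in $\Sym_{\{4,\ldots,n/2\}}$ of this group, and also of its alternating subgroup, is trivial, so $\gamma,\delta\in\Sym_{\{1,2,3\}}$; commutation with $\alpha_1=(1,2)(3,4)$ then restricts $\gamma,\delta$ to $\{1,(1,2)\}$, contradicting either $\gamma\neq\delta$ or $\gamma^2\neq 1$. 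The remaining boundary configurations are handled by the same centralizer computation applied to the other element(s) of $R$; this case analysis is the main obstacle of the proof.
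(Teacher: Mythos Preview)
Your overall strategy parallels the paper's: reduce to $\langle C\rangle\cong\Sym_{n/2}$, force the labels of $C$ to form an interval, and then eliminate the remaining boundary configurations. However, two steps are not adequately justified.

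First, the claim that $\langle C\rangle$ acts \emph{faithfully} on the $\langle\rho_i\rangle$-orbits does not follow from ``$\rho_i$ acts trivially on each such orbit''. It is true, but the reason is that $G\leq\Sym_{n/2}\wr\Cyc_2$ (from the block system $\{B_1,B_2\}$), so $\langle C\rangle\leq C_{\Sym_{n/2}\wr\Cyc_2}(\rho_i)\cong\Sym_{n/2}\times\langle\rho_i\rangle$; the kernel of the orbit action is then $\langle C\rangle\cap\langle\rho_i\rangle=1$ by independence of $S$. Without this, the rank bounds for primitive groups cannot be invoked.

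Second, ``up to duality one representative'' is incorrect. Up to duality of $\Gamma$ there are three distinct boundary configurations: $L=\{\rho_0\},R=\{\rho_1\}$; $L=\{\rho_1\},R=\{\rho_0\}$; and $L=\{\rho_1\},R=\{\rho_0,\rho_2\}$. The latter two are actually easy --- there $\rho_0$ centralises all of $\langle C\rangle$, and since the centraliser in $\Sym_n$ of the transitive $\Sym_{n/2}$ has order $2$ generated by $\rho_i$, one gets $\rho_0=\rho_i$ directly (this is how the paper disposes of $|R|=2$). In the case you do analyse, there is a further omission: the rank $n/2-2$ polytope for $\langle C\rangle$ can be either $\{4,6,3,\ldots,3\}$ or its dual $\{3,\ldots,3,6,4\}$ \emph{relative to the fixed ordering} $c_0=\rho_2,\ldots,c_{r-3}=\rho_{r-1}$, and these are not interchanged by dualising $\Gamma$. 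In the dual orientation the parity pattern of the $\alpha_k$ is reversed, and your centraliser computation for $\rho_1$ must be redone.

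The paper avoids your explicit wreath-product calculation in the hard case $L=\{\rho_0\},R=\{\rho_1\}$: it shows each $G_{1,j}$ (for $j\in C$) is intransitive, invokes Proposition~\ref{sggiint} to pin down the block action of $\langle C\rangle$, and then derives a contradiction directly from how $\rho_1$ acts on the blocks. Your route via the twisted-diagonal embedding is viable but needs the missing sub-cases written out.
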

  
  \begin{proof} 
  Suppose that $\langle C\cup L\rangle$ is transitive and $|R|\neq 0$.
  By Proposition~\ref{m=2(1)} $\langle C\rangle$ is transitive.
  Let $C=\{\rho_j\,|\, j\in I\}$ and $L=\{\rho_i\}$.
  By Lemma~\ref{coroprimitive}, $\langle L\cup C\rangle$  has a primitive action on the $\langle\rho_i\rangle$-orbits. 
  In addition, notice that the elements of $C$ generate independently the action on the  $\rho_i$-orbits. 
  Hence if two consecutive elements of $C$ commute then $\langle C\rangle$ can be factorized has a direct product of two groups that act regularly on the $n/2$ blocks. 
  Hence $|\langle C\rangle|=(n/2)^2$ and, as  $\langle C\rangle$ is primitive, it follows that $n/2\geq 60$. 
  Consequently $|C|\leq 2\log_2(n/2)\leq n/2-3$, giving a contradiction.
  Thus $I$ is an interval.
  Now let us deal separately with the cases $|R|=2$ and $|R|=1$. 
  
  $\bullet\,|R|=2$: We may assume that $R=\{\rho_0,\rho_2\}$, $L=\{\rho_1\}$ and $C=\{\rho_3,\ldots,\rho_{r-1}\}$.
  Then both $\rho_0$  and $\rho_1$ centralize $\langle C\rangle$. 
  But  $\langle C\rangle$ has a primitive action on the $\langle \rho_1\rangle$-orbits, hence $\rho_0=\rho_1$, a contradiction.
  
  $\bullet\,|R|=1$: To avoid the previous contradiction the element of $R$ cannot centralize $\langle C\rangle$. Thus let  $L=\{\rho_0\}$, $R=\{\rho_1\}$ and $C=\{\rho_2,\ldots,\rho_{r-1}\}$. 
  Let us suppose that $G_{1,i}$ is transitive for $i\in\{2,\ldots,r-1\}$. If $n\neq 16$, then  $G_{1,i}$ satisfies the conditions of Lemma~\ref{CLprim}, hence $G_{1,i}$ has a primitive action on the $\langle\rho_0\rangle$-orbits.
  Moreover $C\setminus \{\rho_i\}$ generate independently the action of $G_{1,i}$  on the $\langle\rho_0\rangle$-orbits.
  As $|C\setminus \{\rho_i\}|=n/2-3$ then, by Proposition~\ref{maroti2}, the action on the blocks is either $\Sym_{n/2}$ or $\Alt_{n/2}$.
  Also the action of $\langle C\rangle$ on the $\langle\rho_0\rangle$-orbits must be one of these groups, giving a contradiction.
   Hence $n=16$. As $G_1 =\langle L\cup C\rangle$ is transitive, by Lemma~\ref{coroprimitive},  $G_1$ has a primitive action on the $\langle\rho_0\rangle$-orbits.
  In particular, the group action of $G_1$ on the $\langle\rho_0\rangle$-orbits is a transitive group of degree $8$ of rank $|C|=16/2 - 2 = 6$.
  As the action of $\langle C\rangle$ on the $\langle \rho_0\rangle$-orbits is faithful (the identity is the unique element of  $\langle C\rangle$ fixing the $\langle \rho_0\rangle$-orbits), it follows that $\langle C\rangle$ is a rank $6$ string C-group for a primitive group of degree $8$.
  By Proposition~\ref{prim} and Theorem~\ref{maxan}, we conclude that the group action of $\langle C\rangle$ on the $\langle \rho_0\rangle$-orbits must be $\Sym_{8}$.
 Moreover, its permutation representation graph is one of those given in Proposition~\ref{highC}.
 This implies also that $\langle C\rangle \cong \Sym_{8}$ (as the action of $\langle C\rangle$ on the $\langle \rho_0\rangle$-orbits is faithful). Hence, since $\langle C\rangle$ is transitive on $16$ points, $\langle C\rangle$ has the permutation representation graph given in Proposition~\ref{2npointsrep}, which is as follows:
  $$\xymatrix@-1pc{
          *+[o][F]{}\ar@{-}[rr]^3 \ar@{-}[dd]^{I_{1,3}} && *+[o][F]{}\ar@{-}[rr]^2 \ar@{-}[dd]^{I_{1,2,3}}&&*+[o][F]{}\ar@{-}[dd]^{I_{1,2,3}}\ar@{-}[rr]^3 &&*+[o][F]{}\ar@{-}[dd]^{I_{1,3,4}}\ar@{-}[rr]^4 &&*+[o][F]{}\ar@{-}[dd]^{I_{1,3,4,5}}\ar@{.}[rr] &&*+[o][F]{} \ar@{-}[dd]^{I_{1,3,r-2,r-1}}\ar@{-}[rrr]^{r-1}&&& *+[o][F]{}  \ar@{-}[dd]^{I_{1,3,r-1}} \\
          &&&&&&&&\\
          *+[o][F]{}\ar@{-}[rr]_3&&*+[o][F]{}\ar@{-}[rr]_2&& *+[o][F]{}\ar@{-}[rr]_3 && *+[o][F]{}\ar@{-}[rr]_4 &&*+[o][F]{}\ar@{.}[rr]&&*+[o][F]{} \ar@{-}[rrr]_{r-1}&&& *+[o][F]{} }
   $$      
  Computationally one can check that  if there is an involution $\rho_1$ in $\Sym_8\wr \Cyc_2$ commuting with all the elements of  $C\setminus\{\rho_2\}$, then it must belong to $\langle L \cup C\rangle$, a contradiction.
  Therefore, the case $n=16$ cannot happen.
  
  Hence $G_{1,i}$ is intransitive for every $i\in\{2,\ldots,r-1\}$.
  Let $\Phi=\{\delta_2,\ldots,\delta_{r-1}\}$ where $\delta_i$ is the action of $\rho_i$ on the $\rho_0$-orbits.
  As $\Phi$ is a sggi of rank $n/2-2$ and $\Phi_i$ is intransitive for every $i\in\{2,\ldots,r-1\}$, by Proposition~\ref{sggiint}, up to duality, $\Phi$ is a string C-group having the permutation representation given at the end of Proposition~\ref{highC}.
  Now if $\rho_1$ fixes the blocks $B_1$ and $B_2$ then $\rho_1\rho_1^{\rho_0}\in \langle C\cup L\rangle$, a contradiction.
  If $B_1=B_2\rho_1$ then as $\rho_1$ commutes with $\rho_i$ $i>2$, but, as in Proposition~\ref{m=2(1)}, this forces $\rho_1$ to commute also with $\rho_0$, giving a contradiction with the definition of $C$.
  
  Now let $|R|=0$. As $G$ is a transitive group and $G = \langle C\cup L\rangle$, then $\langle C\cup L\rangle$ is transitive.
  \end{proof}
  
  \begin{prop}\label{aallint}
    Let $r\geq n/2\geq 7$.
    If $\langle C\cup L\rangle$ is intransitive  and $L=\{\rho_i\}$ then $G_j$ is intransitive for $j\notin\{0,i,r-1\}$.
    \end{prop}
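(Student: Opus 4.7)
The plan is to argue by contradiction: suppose that $G_j$ is transitive for some $j \notin \{0, i, r-1\}$. First I observe that by the sggi commuting property any $\rho_k$ with $|k - i| > 1$ commutes with $\rho_i$ and hence lies in $C$, so $R \subseteq \{\rho_{i-1}, \rho_{i+1}\}$; moreover the hypothesis that $\langle C \cup L\rangle$ is intransitive while $G$ is transitive forces $|R| \geq 1$.

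The structural backbone is the direct product decomposition $G_j = G_{<j} \times G_{>j}$, which follows from the intersection property together with the observation that every generator of $G_{<j}$ and every generator of $G_{>j}$ have indices differing by at least $2$ and therefore commute. Transitivity of $G_j$ then forces the $G_{<j}$-orbits to be a system of imprimitivity permuted transitively by $G_{>j}$. Up to duality I may assume $j < i$; then all generators of $G_{<j}$ have index $\leq j - 1 \leq i - 2$, commute with $\rho_i$, and belong to $C$, so $G_{<j} \subseteq \langle C\rangle \subseteq \langle C \cup L\rangle$.

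The main quantitative step is to invoke Lemma~\ref{X}: choosing $j$ maximal with $G_j$ transitive in $\{1,\ldots, r-2\}$ and verifying that $G_{r-1}$ is intransitive (which I would ensure by first using duality to place $\rho_i$ in the first half of the string, so that $\rho_{r-1} \in C$ and the maximality of $j$ then forces $G_{r-1}$ intransitive), Lemma~\ref{X} yields $r \leq k + n/k - 1$, where $k$ is the size of a $G_{<j}$-orbit. Combined with $r \geq n/2$ and $n \geq 14$, this leaves only $k \in \{2, n/2\}$. The case $k = n/2$ is immediate: the $G_{<j}$-orbits are then exactly the two blocks $B_1$ and $B_2$, so $\langle G_{<j}, \rho_i\rangle \subseteq \langle C \cup L\rangle$ is already transitive on $\{1,\ldots,n\}$, contradicting the hypothesis.

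The remaining case $k = 2$, where the $G_{<j}$-orbits are pairs refining the two-block system, is where I expect the main work to lie. I would split according to whether these pairs sit inside individual blocks or cross between them, exploit the transitive action of $G_{>j}$ on the $n/2$ pairs, and use that every generator of $G_{>j}$ outside $R$ belongs to $C$; the aim is to exhibit a transitive subgroup of $\langle C \cup L\rangle$ generated by $G_{<j}$, by $C \cap G_{>j}$, and by $\rho_i$, producing the same contradiction with the intransitivity hypothesis on $\langle C \cup L\rangle$.
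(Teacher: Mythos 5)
Your reduction to $G_j=G_{<j}\times G_{>j}$ and to the case $G_{<j}\subseteq\langle C\rangle$ matches the paper, but the quantitative core of your argument does not go through. Lemma~\ref{X} requires $G_{r-1}$ to be intransitive, and your justification of this hypothesis is a non sequitur: choosing $j$ maximal in $\{1,\dots,r-2\}$ with $G_j$ transitive says nothing about $G_{r-1}$, and $\rho_{r-1}\in C$ does not make $G_{r-1}$ intransitive either. In the present setting an extreme section can genuinely be transitive (the later propositions and graphs (5)--(7) of Table~\ref{m=2table} exhibit exactly this), so if both $G_0$ and $G_{r-1}$ happened to be transitive neither Lemma~\ref{X} nor its dual is available, and ruling that situation out at this stage would be circular, since the subsequent analysis of transitive $G_0$, $G_{r-1}$ relies on the present proposition. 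There is also a clash between your two duality normalizations: you first dualize to put the transitive witness at $j<i$, then again to put $\rho_i$ in the first half, and moreover the \emph{maximal} transitive label need not lie on the same side of $i$ as your witness, which undermines the containment $G_{<j}\subseteq\langle C\rangle$ that your contradiction needs.

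Even granting the bound, the endgame is incomplete. In the case $k=n/2$ the two $G_{<j}$-orbits need not be $B_1$ and $B_2$ (elements of $C$ may interchange the blocks), and $\rho_i$, which merely permutes these two orbits, may fix both setwise; so transitivity of $\langle G_{<j},\rho_i\rangle$ is not automatic and the contradiction is not ``immediate''. The case $k=2$, which you defer, is precisely where the paper's proof spends almost all of its effort: it passes to a maximal block system fixed by $G_{<j}$ and $\rho_i$, uses primitivity of the block action together with the bounds $|L'|\leq m'-1$, $|C'|\leq k'/2$, $|R'|\leq 2$ to force $(m',k')=(4,4)$ or $m'=2$, and then eliminates these by delicate sub-cases (using Lemma~\ref{C4} and arguments producing $r<n/2$, central involutions, or equalities like $\rho_0\in G_0$), rather than by exhibiting a transitive subgroup of $\langle C\cup L\rangle$ as you hope to do. As it stands, your proposal replaces the paper's self-contained counting for $G_j$ with an inapplicable lemma and leaves the hardest case as a plan, so the proof has a genuine gap.
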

    \begin{proof}
    Suppose that $G_j$ is transitive.
    In this case $G_j=G_{<j}\times G_{>j}$.
    Suppose that $\{ \rho_0,\ldots,\rho_{j-1}\}\not\subseteq C$ and $\{ \rho_{j+1},\ldots,\rho_{r-1}\}\not\subseteq C$. 
    Then $i-1\leq j-1$ and $i+1\geq j+1$, which give $j=i$, a contradiction.
    Hence, either $\{ \rho_0,\ldots,\rho_{j-1}\}\subseteq C$ or  $\{ \rho_{j+1},\ldots,\rho_{r-1}\}\subseteq C$.
    Suppose, without loss of generality  that $\{ \rho_0,\ldots,\rho_{j-1}\}\subseteq C$.
    As $\langle C\cup L\rangle$ is intransitive,  $G_{<j}$ is also intransitive. Therefore the $G_{<j}$-orbits determine a block system for $G_j$.
    Now either $\rho_i$ fix all $G_{<j}$-orbits or swaps all of them pair wise. 
    When $\rho_i$ swaps two $G_{<j}$-orbits, say $O_1$ and $O_2$, then $O_1\cup O_2$ is a block of another block system whose blocks are twice bigger.
    Notice that, as $\{ \rho_0,\ldots, \rho_{j-1}, \rho_i\}\subseteq L\cup C$, $O_1\cup O_2 \neq \{1,\ldots,n\}$. 
    Consider  a maximal block system such that $G_{<j}$ fixes the blocks. Then $\rho_i$ fixes the blocks.
    Particularly the maximality of the blocks implies the action on the blocks is primitive.
    Let $k'$ and $m'$ be the size of a block and the number of blocks, respectively.
    
    Consider first the case $m'>2$.
      As the permutations $\rho_0,\ldots, \rho_{j-1}$ and $\rho_i$ fix the blocks, $k'$ is even and $k'\geq 4$.
    Consider the sets $L'$,  generating independently block action; $C'$, the set of generators of $G_j$ that commute with all the elements of $L'$; and $R'$, the set of the remaining generators of $G_j$.
      It follows that $|L'|\leq m'-1$.  As $\langle L'\rangle$ is primitive we may assume that the elements of  $L'$ are consecutive, hence $|R'|\leq 2$. 
    
    In this case $\langle C'\rangle$ is  an imprimitive group, with two blocks, embedded into $\Sym_{k'}$.  Hence, by Proposition~\ref{maximprim},  $|C'|\leq k'/2$. 
    Consequently, when $m'\neq 2$, $m'k'/2-1=r-1\leq (m'-1)+k'/2+2$, or equivalently, $(m'-1)(k'-2)\leq 6$, as $n>12$, this is only possible if $(m',k')=(4,4)$.
    Now it remains to consider the cases $(m',k')=(4,4)$ and $m'=2$. Let us deal with them separately.
    
    $(m',k')=(4,4)$: In this case $|L'|= 3$, $|C'|=2$ and $|R|=2$.
    Suppose first that $\rho_i\in C'$. In this case $j=1$ and $C'=\{\rho_0,\rho_i\}$. Thus the elements of $L'$ commute with $\rho_i$, which mean that $L'\subseteq C$.
    As $\langle L'\cup C'\rangle$ is transitive, we get that $\langle L\cup C\rangle$ is transitive, a contradiction.
    Thus $\rho_i\notin C'$. Hence $\rho_i\in R'$.  Let $C'=\{\rho_0,\rho_x\}$. As $\rho_i$ fixes the blocks (of size $4$) 
    and $B_1\rho_i=B_2$,  $\langle \rho_0,\rho_x,\rho_i\rangle\cong (\Cyc_2)^3$ (and $\rho_0$ also commutes with the elements of $L'$).
    Thus $\rho_0$ is a central involution in $G_j$.
    For each $l\neq j$, let $\bar{\rho_l}$ denote the action of $\rho_l$ on the $\langle \rho_0\rangle$-orbits.
    The set $\{\bar{\rho_x},\, \bar{\rho_i}\}\cup \{\bar{\rho}_y\,|\, \rho_y\in L'\}$ is independent and has size $5$. 
    Moreover $H:=\langle\bar{\rho_l}\,|\, l\neq 0,\,j\rangle$ is a transitive subgroup of $\Sym_2\wr \Sym_4$ whose block action is $\Sym_4$, while $\bar{\rho_x}$ and  $\bar{\rho_i}$ fix the blocks ($\bar{\rho_x}$ is a central involution).
    Thus $H_x$ and $H_i$ are transitive subgroups of $\Sym_2\wr \Sym_4$ whose block action is $\Sym_4$.
    As the number of blocks is even ($4$ blocks), we necessarily have $\bar{\rho_x}\in H_x$, a contradiction.
    
    $m'=2$: 
    Let $L'=\{\rho_x\}$ and $B'_1$ and $B'_2$ be the blocks swapped by $\rho_x$.
    As $\rho_i$ fixes $B'_1$ and $B'_2$, $G_j\leq \Sym_{n/4}\wr \Sym_4$, to be precise the block action is $\Cyc_2\times \Cyc_2$. The set  $\{\rho_i,\rho_x\}$ generate the block action, which can be described by an alternating $\{i,x\}$-square, meaning that $(\rho_i\rho_x)^2$ acts as the identity on the blocks.
     For this embedding the block action is generated by $\rho_i$ and $\rho_x$. Let $\mathcal{V}=\{\mathcal{V}_1,\mathcal{V}_2,\mathcal{V}_3,\mathcal{V}_4\}$ denote the  block system mentioned above with  $\mathcal{V}_1\rho_i=\mathcal{V}_2$, $\mathcal{V}_3\rho_i=\mathcal{V}_4$, $\mathcal{V}_1\rho_x=\mathcal{V}_3$ and $\mathcal{V}_2\rho_x=\mathcal{V}_4$. 
     Let $M=\{\rho_i, \rho_x\}$, $D$ be the set of generators of $G_j$ that commute with $\rho_i$ and $\rho_x$ and $N$ be the generators of $G_j$ that are neither in $D$ nor in $M$.   
     It follows that $|D|\leq n/4-1$ by Lemma~\ref{C4}.  In this case there are at most four generators that do not commute with both $\rho_i$ and $\rho_x$, hence $|N|\leq 4$.
    Thus $r-1=|D|+|M|+|N|\leq (n/4-1)+2+4$, giving a contradiction for $n> 24$. 
    We need to consider $n\in\{16,20,24\}$.
    
    Let $p$ be order of $\rho_i\rho_{i+1}$, which is even.
    In this case  $(\rho_i\rho_{i+1})^{p/2}$ is a nontrivial permutation fixing the four blocks.
    Moreover this permutation commutes with both $\rho_{i+1}$ and $\rho_i$. As  $(\rho_i\rho_{i+1})^{p/2}\notin \langle D\rangle$,
    we  conclude that $|D|\leq n/4-2$.  Consequently  $r-1=|D|+|M|+|N|\leq (n/4-2)+2+2$, which gives $n\leq 12$, a contradiction.
    We get the same conclusion if  $\{\rho_i,\rho_{i-1}\}$ generate the action on the blocks $\{\mathcal{V}_1,\mathcal{V}_2,\mathcal{V}_3,\mathcal{V}_4\}$ and $(\rho_i\rho_{i-1})^2$ is nontrivial.
    
    Now consider that $M=\{\rho_i,\rho_x\}$ and $\rho_x\in C$. 
    Let $q$ be the size of a $G_{<j}$-orbit.
    Having in mind  that $q$ divides $n/2$,  let us consider all the possibilities for $q$.
    
    $\bullet\,q=n/2$:
    If $q=n/2$ then $O_1=\mathcal{V}_1\cup \mathcal{V}_2$ and $O_2=\mathcal{V}_3\cup \mathcal{V}_4$ would be the $G_{<j}$-orbits, but then $\langle \rho_0,\ldots,\rho_{j-1},\rho_x\rangle$ would be transitive, a contradiction.
    
    $\bullet\,q=n/4$: 
    In this case the $G_{<j}$-orbits cannot be $\mathcal{V}_1,\, \mathcal{V}_2,\,\mathcal{V}_3$ and $\mathcal{V}_4$, otherwise $\langle\rho_0,\ldots, \rho_{j-1},\rho_x,\rho_i\rangle$ would be transitive, a contradiction.
    Thus, in this case,  $\rho_i$ must fix the $G_{<j}$-orbits. Particularly $n/4$ is even (either $n=16$ or $n=24$). 
    Suppose that $q=6$. In this case either $G_{<j}=\langle D\rangle$  or $G_{<j}$ is  in the center $\langle D\rangle$. In the first case $|D|\leq 3$ and in the second case, by Lemma~\ref{C4}, $|D|\leq n/4-2=4$, hence $r-1\leq  4+2+4$, giving $r<n/2$, a contradiction.
    Thus $q=4$.  As  $\rho_i$ fix the $G_{<j}$-orbits, $j=2$. Moreover $\langle \rho_0,\rho_1\rangle=\Cyc_2\times\Cyc_2$, thus by Lemma~\ref{C4},  $r-3\leq 16/4-1$, which gives a bound below $n/2$.
    
    $\bullet\,q=n/6$ or $q=3$: In this case $n=24$. Consider first the case $G_{<j}$ fixing $\mathcal{V}_1$, $\mathcal{V}_2$, $\mathcal{V}_3$ and $\mathcal{V}_4$.  Recall that  $G_{<j}=\langle D\rangle$  or $G_{<j}$ is  in the center $\langle D\rangle$.  
    In the first $\langle D\rangle$ is intransitive.  In the second case  $\langle D\rangle$ is not isomorphic to $\Sym_{n/4}$.  In both cases we get that $|D|\leq n/4-2$, and as $n=24$, $r<n/2=12$. 
    Thus, $\rho_i$ fixes the $G_{<j}$-orbits, which implies that $q=4$. Then, as before $j=2$ and, $G_{<2}=\Cyc_2\times\Cyc_2$. Thus by Lemma~\ref{C4},  $r-3\leq 24/4-1=7<24/2$.
    
     $\bullet\, q\in \{n/8, n/10,n/12\}$ and $q\neq 3$:  In this case $q=2$ and $j=1$. Thus $\rho_0$ is a central involution of $G_1$. If $\mathcal{V}_1\rho_0=\mathcal{V}_2$ (hence  $\mathcal{V}_3\rho_0=\mathcal{V}_4$) then if we consider  $M'=\{\rho_0,\rho_x\}$ for the generators on $\mathcal{V}$, $D'$ the set of generators commuting with $M'$ and $N'$ the generators of $G_j$ not in $M'\cup D'$,  we get $|N'|\leq 2$. In addition $\rho_0\rho_i$ fixes the blocks and commutes with all the elements of $M'$. Thus $|D'|<n/4-1$, and $r-1=|D'|+|M'|+|N'|\leq (n/4-2)+2+2$, which gives $n\leq 12$, a contradiction.
    
    Thus $\rho_0$ is a central involution fixing the blocks of $\mathcal{V}$. Hence $G_j\leq \Sym_{n/8}\wr (\Cyc_2)^3$ and Lemma~\ref{C4} implies that  $r-4\leq n/8-1$, giving $r<n/2$.
    \end{proof}
  
 
For the remaining of this section we assume the following:
\begin{itemize}
\item $n/2\geq 7$;
\item  $\langle L \cup C\rangle$ is intransitive;
\item $L=\{\rho_i\}$;
\item $|R|>0$; and 
\item $G_j$ is intransitive for $j\notin\{0,i,r-1\}$.
\end{itemize}
To determine  the remaining possibilities  for $\mathcal{G}$, consider the graphs $\mathcal{I}$ and   $\mathcal{F}$ given by the following construction.
  \begin{construction}~\label{bob}
  Consider a graph $\mathcal{I}$ whose vertices are the $\langle \rho_i\rangle$-orbits denoted by $O_1,\,\ldots O_{n/2}$, and with a $j$-edge ($j\neq i$) when $a\rho_j=b$ for $a$ and $b$ in different $\langle \rho_i\rangle$-orbits. 
  The graph $\mathcal{I}$ has the following properties: 
  \begin{itemize}
  \item[P1:] Adjacent edges, or parallel edges, of $\mathcal{I}$ either correspond to adjacent edges, or parallel edges, of $\mathcal{G}$ or to a pair of edges that are adjacent to a common $i$-edge;
  \item[P2:] If a $j$-edge and a $l$-edge are adjacent in $\mathcal{I}$ but not in $\mathcal{G}$ then $\{j,l\}\subseteq \{i-1,i+1\}$; and
  \item[P3:] If two $j$-edges of  $\mathcal{I}$ are adjacent  or parallel, then $j=i\pm1$. Particularly, $\mathcal{I}$ has a cycle with edge having the same label, if and only if,  $\mathcal{G}$ has a $\{i,j+1\}$-cycle.
  \end{itemize}
  Now consider a  generalization of the concept of a fracture graph\footnote{This is defined as a spanning subgraph of the permutation graph that contains exactly one \( i \)-edge \( \{a, b\} \) for each \( i \in \{0, \ldots, r-1\} \), with the property that \( a \) and \( b \) lie in different \( G_i \)-orbits.}
 introduced in \cite{extension}. 
  Let $\mathcal{F}$ be a spanning subgraph of $\mathcal{I}$ with exactly one $j$-edge $\{O_s,O_t\}$ for each label $j$ chosen among the $j$-edges connecting $\langle \rho_i\rangle$-orbits that belong to different $G_j$-orbits.
  The number of edges of  $\mathcal{F}$ is the cardinality of the set $\{j\,|\, G_j \mbox{ is intransitive}\ \wedge\ j\neq i\}$. 
  \end{construction} 
 
 Some of the key properties of fracture graphs, namely Lemmas 3.2, 3.3, 3.5 and 3.6 of \cite{extension}, also hold for a graph  $\mathcal{F}$ obtained by the construction above. 
  
  \begin{lemma}\label{FG}
  \begin{enumerate}
  \item  Let $f=|\{j\,|\, G_j \mbox{ is intransitive} \ \wedge\ j\neq i\}|$. The graph $\mathcal{F}$ is a forest with $n/2-f$ connected components.
  \item If there exist two edges $\{ O_s, O_t \} $ with distinct labels $j$ and $l$ in $\mathcal{I}$, then $O_s$ and $O_t$ are in distinct connected components of $\mathcal{F}$.
  \item If there exist two $j$-edges $\{ O_s, O_t \} $ and $\{ O_u, O_v \} $  in $\mathcal{I}$, then not all vertices $\{ O_s, O_t, O_u, O_v \} $ are in a same connected component of $\mathcal{F}$.
  \item If a cycle $\mathcal{C}$ of $\mathcal{I}$ contains the $j$-edge of  $\mathcal{F}$, then $\mathcal{C}$ contains another $j$-edge.
  \item If $O_s$ and $O_t$ are vertices in the same connected component of $\mathcal{F}$, and  $e=\{ O_s, O_t \} $ is an $j$-edge in $\mathcal{I}$, then $e$ is in $\mathcal{F}$.
  \item Let $O_v,\, O_w,\,O_s,\,O_t$ be vertices of an alternating square of $\mathcal{I}$ as in the following figure. 
  $$ \xymatrix@-1pc{   *+[o][F]{v} \ar@{-}[d]_{l} \ar@{-}[r]^j  & *+[o][F]{w}\ar@{-}[d]^{l} \\
  *+[o][F]{s} \ar@{-}[r]_j& *+[o][F]{t}}$$
  If  $\{O_v,\, O_w\}$ and $\{O_v,\,O_s\}$ are edges of $\mathcal{F}$ then $O_t$ is in different connected components of $\mathcal{F}$.
  
  \end{enumerate}
  \end{lemma}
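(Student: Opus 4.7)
The plan is to prove Lemma~\ref{FG} by adapting the corresponding arguments in Lemmas 3.2, 3.3, 3.5 and 3.6 of \cite{extension} to the present setting, where the vertices of $\mathcal{F}$ are the $\langle\rho_i\rangle$-orbits rather than blocks of imprimitivity. The only real technical change is that, by property P1 of Construction~\ref{bob}, two edges adjacent in $\mathcal{I}$ may correspond in $\mathcal{G}$ to edges sharing a common $i$-edge rather than a common vertex; this is handled by inserting factors $\rho_i$ between consecutive generators whenever one translates a walk in $\mathcal{I}$ into an element of $G$.

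First I would prove (1). Suppose for contradiction that $\mathcal{F}$ contains a cycle and let $e$ be an edge of that cycle with label $j$. Traversing the remaining edges of the cycle and using P1 to insert factors $\rho_i$ as needed yields a product $\pi$ of generators $\rho_k$ with $k\neq j$ that sends a point in one endpoint of $e$ to a point in the other endpoint. Then $\pi\in G_j$, while $e$ was chosen among $j$-edges whose endpoints lie in different $G_j$-orbits, a contradiction. Hence $\mathcal{F}$ is a forest; since it has $n/2$ vertices and $f$ edges, it has exactly $n/2-f$ connected components.

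Parts (2), (3), (5) and (6) all reduce to (1) by the standard cycle-closing principle: in each case the hypothetical configuration would permit one to attach an extra edge to $\mathcal{F}$ (or to identify two chosen edges) so as to create a cycle in $\mathcal{F}$, contradicting (1). For instance, in (5), if $e=\{O_s,O_t\}$ is a $j$-edge of $\mathcal{I}$ with $O_s,O_t$ in the same $\mathcal{F}$-component but $e\notin\mathcal{F}$, then together with the $\mathcal{F}$-path joining $O_s$ to $O_t$ one obtains a cycle in $\mathcal{I}$ from which, by the traversal argument above, one derives an element of $G_j$ identifying the orbits of the distinguished $j$-edge of $\mathcal{F}$, again a contradiction. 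For (4), a cycle of $\mathcal{I}$ containing exactly one $j$-edge (the distinguished one) would translate, via the same traversal together with P1, into an element of $G_j$ connecting the two endpoints of that $j$-edge, which is impossible. Part (6) then follows from (1) and (5): otherwise the four vertices of the alternating square together with the $\mathcal{F}$-edges $\{O_v,O_w\}$ and $\{O_v,O_s\}$ would place $O_s,O_t$ in the same component and an application of (5) to the $j$-edge $\{O_s,O_t\}$ would force it into $\mathcal{F}$, producing a cycle.

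The main obstacle is the bookkeeping imposed by property P1: since adjacency in $\mathcal{I}$ may be mediated by an $i$-edge that is itself absent from $\mathcal{I}$, one must consistently insert $\rho_i$ between consecutive generators when reading off a walk in $\mathcal{I}$ as an element of $G$, and then check that the resulting element still fixes the set of $G_j$-orbits one needs it to fix. Once this is handled carefully, the remaining arguments follow exactly the pattern of \cite{extension}.
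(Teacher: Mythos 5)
Your overall strategy is the same as the paper's: adapt the fracture-graph lemmas of \cite{extension}, with the extra bookkeeping from P1 of Construction~\ref{bob} (inserting $\rho_i$'s when a walk in $\mathcal{I}$ is read as an element of $G$, which is harmless because $i$ never equals the label $j$ under consideration). Your arguments for parts (a) and (d) (your (1) and (4)) are correct. The gap is in the middle: parts (b), (c) and (e) (your (2), (3), (5)) do \emph{not} ``reduce to (1) by the standard cycle-closing principle''. Adjoining an edge $e$ of $\mathcal{I}\setminus\mathcal{F}$ to $\mathcal{F}$ and observing that a cycle appears contradicts nothing, because part (a) only says $\mathcal{F}$ itself is acyclic, and its proof depends on the deleted edge being a \emph{distinguished} edge of $\mathcal{F}$, i.e.\ one whose endpoints lie in different $G_k$-orbits for its label $k$; an arbitrary $j$-edge of $\mathcal{I}$ has no such separation property. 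Your sketch of (e) shows the problem concretely: the cycle you close is the $\mathcal{F}$-path $P$ together with the $j$-edge $e$, so any traversal that omits the distinguished $j$-edge still passes through $e$ and hence involves $\rho_j$ --- you can never extract from this cycle an element of $G_j$ joining the endpoints of the distinguished $j$-edge. The correct route is through (d) applied to the \emph{other} labels: every edge of $P$ is the distinguished edge for its label $k$, and if some $k\neq j$ occurs in $P$, the cycle contains exactly one $k$-edge, contradicting (d); hence all edges of $P$ have label $j$, so $P$ is the single distinguished $j$-edge joining $O_s$ and $O_t$, which is the (non-contradictory) conclusion of (e). This case, which is the whole content of (e), is missing from your sketch. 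Parts (b) and (c) then follow from (e) together with the fact that $\mathcal{F}$ contains exactly one $j$-edge (for (b) note also that the $l$-edge shows $O_s,O_t$ lie in the same $G_j$-orbit, so no $j$-edge between them can be the distinguished one).

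Your reduction of (f) to (e) is essentially right, but the contradiction you name is not: forcing the $j$-edge $\{O_s,O_t\}$ into $\mathcal{F}$ does not create a cycle (the edges $\{O_w,O_v\}$, $\{O_v,O_s\}$, $\{O_s,O_t\}$ only form a path); it creates a second $j$-edge in $\mathcal{F}$, contradicting the construction of $\mathcal{F}$ (equivalently, part (c)). All of these repairs are routine and keep you within the paper's own method, but as written the proposal derives (b), (c) and (e) from the wrong principle.
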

  \begin{proof}
 Let us use the same arguments used in Lemma 3.2 (1)  of \cite{extension} to prove (a).  If $j$ is the label of an edge $\{O_s,O_t\}$ of $\mathcal{F}$ belonging to a cycle then $j$ is also an edge of $\mathcal{G}$ that belongs to a cycle  (either with the same number of edges or with some extra $i$-edges) that does not contain other $j$-edges.
  Therefore $O_s$ and $O_t$ are in the same $G_j$-orbit, a contradiction. Thus $\mathcal{F}$ is a forest. As $\mathcal{F}$ has $n/2$ vertices and $\epsilon$ edges, the number of connected components is given by $n/2-\epsilon$.
  
  To prove (b), (c), (d), (e) and (f) we just need to adapt the proofs of Lemmas 3.2(3), 3.2(4), 3.3, 3.5 and 3.6 of \cite{extension}, respectively.   
  \end{proof}

  \begin{prop}\label{ai}
 If $G_0$ and $G_{r-1}$ are intransitive, then  $\mathcal{G}$ is, up to duality,  the graph (3) or  (4) of Table~\ref{m=2table}.
  \end{prop}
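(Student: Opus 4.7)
The plan is as follows. First, I would observe that all stabilizers $G_j$ are intransitive: for $j \notin \{0, i, r-1\}$ this is Proposition~\ref{aallint}, for $j \in \{0, r-1\}$ it is the hypothesis, and for $j = i$ it holds because $L = \{\rho_i\}$ is the only generator swapping the two blocks, so $G_i$ preserves $\{B_1, B_2\}$. Construction~\ref{bob} then produces a graph $\mathcal{F}$ with exactly $r-1 = n/2 - 1$ edges, one for each label in $\{0,\ldots,r-1\}\setminus\{i\}$, and Lemma~\ref{FG}(a) gives that $\mathcal{F}$ is a forest with $n/2 - (r-1) = 1$ connected component. Thus $\mathcal{F}$ is a spanning tree on the $n/2$ vertices of $\mathcal{I}$.

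Next I would determine the shape of this tree. Since $\mathcal{F}$ is connected, Lemma~\ref{FG}(c) forbids any two $j$-edges from coexisting in $\mathcal{I}$, so for each $j\neq i$ there is a unique $j$-edge of $\mathcal{I}$. For two labels $j,l$ with $|j-l|>1$ and $\{j,l\}\not\subseteq\{i-1,i+1\}$, property P2 of Construction~\ref{bob} together with the commuting property forces their edges in $\mathcal{F}$ to be vertex-disjoint; otherwise they would extend to a $\{j,l\}$-square in $\mathcal{G}$, providing a second $j$-edge and a second $l$-edge in $\mathcal{I}$, contradicting the previous sentence. This compels the labels $\{0,\ldots,i-1\}$ to appear in order along a subpath and similarly for $\{i+1,\ldots,r-1\}$; the only way to merge these two subpaths into a single tree is to have the $(i-1)$-edge and the $(i+1)$-edge share a vertex, which is permitted by P2 precisely because $\{i-1,i+1\}\subseteq\{i-1,i+1\}$. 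Hence $\mathcal{F}$ is a single labelled path of length $r-1$.

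In the final step I would lift the path structure of $\mathcal{F}$ back to $\mathcal{G}$. The generator $\rho_i$ swaps the two points inside each $\langle\rho_i\rangle$-orbit, producing a perfect matching between $B_1$ and $B_2$. For $j\in C$ the commutation with $\rho_i$ forces $\rho_j$ to act as a pair of aligned transpositions across the two blocks (or possibly, at the two extremal labels, as a single transposition within one block; by Propositions~\ref{highC} and~\ref{2npointsrep} only the two patterns appearing in the statement survive). The generator(s) in $R$, which necessarily lie in $\{\rho_{i-1},\rho_{i+1}\}$, fail to commute with $\rho_i$, and this non-commutation introduces a ``twist'' at the junction between the two halves of the path. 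The intersection property together with the sggi commuting relations leaves exactly two inequivalent ways of realising this twist (up to the duality reversing the generator sequence), yielding precisely graphs (3) and (4) of Table~\ref{m=2table}.

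The main obstacle I anticipate is this last reconstruction step: enumerating the lifts of the path-shaped $\mathcal{F}$ to $\mathcal{G}$ while simultaneously respecting independence of $S$, the commuting relations, and the intersection property, and then verifying that all seemingly distinct configurations either coincide under the duality or collapse to one of the two graphs (3) and (4) — with no further twisted variants surviving. The tree-to-path reduction in the middle paragraph is conceptually the sharpest point of the argument, but the combinatorial bookkeeping at the junction in the final paragraph is where the case analysis is most delicate.
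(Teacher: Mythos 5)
Your first two steps match the paper's own argument: all $G_j$ with $j\neq i$ are intransitive, so $\mathcal{F}$ has $n/2-1$ edges, is a spanning tree, coincides with $\mathcal{I}$, and the adjacency constraints (consecutive labels, or the exceptional pair $\{i-1,i+1\}$ allowed by P2) force $\mathcal{F}$ to be a labelled path --- either $1,\ldots,r-1$ with $i\in\{0,r-1\}$, or $0,\ldots,i-1,i+1,\ldots,r-1$ with a junction at the $(i-1)$- and $(i+1)$-edges. (Your side remark that $G_i$ is intransitive ``because $\rho_i$ is the only generator swapping the two blocks'' is unjustified at that stage --- that uniqueness is exactly what still has to be proved --- but it is also unnecessary, since only labels $j\neq i$ enter the edge count of $\mathcal{F}$.)

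The genuine gap is your third step, the lift from $\mathcal{F}$ back to $\mathcal{G}$. The claim that commutation with $\rho_i$ forces each $\rho_j\in C$ to act as an aligned pair of transpositions is false as stated: a generator commuting with $\rho_i$ may instead swap $B_1$ and $B_2$, in which case it is fixed-point-free, acts like $\rho_i$ on every $\langle\rho_i\rangle$-orbit it preserves and crosses diagonally on the pair it merges; nothing in your sketch excludes such generators. Proving that $\rho_i$ is the \emph{unique} block-swapping generator is precisely the content of the second half of the paper's proof: in the case $i=0$ one first gets $R=\{\rho_1\}$ from the string condition, then shows a block-swapping $\rho_x$ with $x\neq 0$ could only be $\rho_2$, and that this configuration forces $(\rho_2\rho_3)^3=\rho_0$, a contradiction; in the junction case a block-swapping $\rho_x$ with $x\neq i$ produces adjacent $(i-1)$- and $(i+1)$-edges, hence an alternating $\{i-1,i+1\}$-square in $\mathcal{I}$, and Lemma~\ref{FG}(f) then contradicts the connectedness of $\mathcal{F}$. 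Your appeal to ``the intersection property together with the sggi commuting relations'' merely asserts this outcome, and Propositions~\ref{highC} and~\ref{2npointsrep} (which concern rank $n-1$ and $n-2$ representations of $\Sym_n$) play no role in this proposition. Until these exclusions are carried out, configurations with crossed or extra block-swapping generators have not been ruled out, so the reduction to graphs (3) and (4) of Table~\ref{m=2table} remains unproved.
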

  \begin{proof}
  In this case the number of edges of   $\mathcal{F}$ is precisely $n/2-1$.
  Hence by Lemma~\ref{FG} (a) $\mathcal{F}$ is a tree. Moreover, by  Lemma~\ref{FG} (b) and (c)  $\mathcal{I}=\mathcal{F}$.  
  
  Suppose first that any pair of adjacent edges of $\mathcal{F}$ have consecutive labels.  
  Then, $i\in\{0,r-1\}$. Up to duality we may assume that $i=0$ and therefore $\mathcal{F}$ is as follows.
  $$ \xymatrix@-1pc{*+[o][F]{}\ar@{-}[rr]^1&&*+[o][F]{}\ar@{-}[rr]^2&&*+[o][F]{}\ar@{.}[rr] &&*+[o][F]{} \ar@{-}[rr]^{r-1}&& *+[o][F]{}  }$$
  As $|R|\geq 1$, we must have $R=\{\rho_1\}$. 
  Let $O_1$ and $O_2$ be the  $\langle \rho_0\rangle$-orbits swapped by $\rho_1$.
  Suppose there is a $x\neq 0$ such that $B_1\rho_x=B_2$. First $x\neq 1$, otherwise $\rho_1$ and $\rho_0$ commute, a contradiction.
  In order to avoid the same contradiction, the unique permutation $\rho_j$ (with $j\notin\{ 0,1\}$) that may act nontrivially  on $O_1$ and $O_2$, is $\rho_2$.
  This gives a unique possibility for $x$, which is $x=2$.
   Let us then assume that $B_1\rho_2=B_2$, Then $\mathcal{G}$ is as follows.
   \begin{small}
$$ \xymatrix@-1pc{
  *+[o][F]{}\ar@{-}[rr]^1\ar@{=}[dd]^{\{0,2\}}&&*+[o][F]{}\ar@/_.5pc/@{-}[rrdd]_2\ar@{-}[dd]^0&&*+[o][F]{}\ar@{-}[dd]^0\ar@{-}[rr]^3&&*+[o][F]{}\ar@{=}[dd]_{\{0,2\}} \ar@{-}[rr]^4&&*+[o][F]{}\ar@{=}[dd]_{\{0,2\}}  \ar@{.}[rr]&&\\
  && && && &&&&\\
  *+[o][F]{}&&*+[o][F]{}\ar@/_.5pc/@{-}[rruu]_2&&*+[o][F]{}\ar@{-}[rr]_3&&*+[o][F]{} \ar@{-}[rr]_4&& *+[o][F]{}\ar@{.}[rr] &&}$$
\end{small} 
  Then $(\rho_2\rho_3)^3=\rho_0$, a contradiction.
  Then $\rho_0$ is the only generator swapping the blocks.
  Hence $\mathcal{G}$ is the graph (3) of Table~\ref{m=2table}.
  
  Now  suppose that  $l$ and $j$ are nonconsecutive  labels of adjacent edges of $\mathcal{F}$. 
  If $\{l,j\}\neq \{i-1,i+1\}$ then
  $\mathcal{G}$ has an alternating $\{l,j\}$-square, and consequently $\mathcal{I}$ has an alternating $\{l,j\}$-square. 
  Then, by Lemma~\ref{FG} (f), $\mathcal{F}$ cannot be a tree, a contradiction.
   Thus $\{l,j\}= \{i-1,i+1\}$.
   This gives following possibility for $\mathcal{F}$.
    $$ \xymatrix@-1pc{*+[o][F]{}\ar@{-}[rr]^1&&*+[o][F]{}\ar@{.}[rr] &&*+[o][F]{} \ar@{-}[rr]^{i-1}&& *+[o][F]{}  \ar@{-}[rr]^{i+1}&&*+[o][F]{}\ar@{.}[rr] &&*+[o][F]{} \ar@{-}[rr]^{r-1}&& *+[o][F]{}  }$$
    If $\rho_i$ is the unique permutation permuting $B_1$ and $B_2$ then $\mathcal{G}$ is the graph (4) of Table~\ref{m=2table}.
  Let us prove this is the only possibility.
  Suppose on the contrary that there exits $x\neq i$ such that $B_1\rho_x=B_2$. Let $O_s$ and $O_t$ be the  $\langle \rho_i\rangle$-obits that are merged by $\rho_x$. 
  Then $\rho_x$ is a fixed point free permutation fixing all $\langle \rho_i\rangle$-orbits except  $O_s$ and $O_t$. If $x\neq i\pm 1$, as $\rho_x$ commutes either with $\rho_{i-1}$ or with $\rho_{i+1}$, then $\mathcal{G}$ has an alternating $\{i-1,i+1\}$-square.
  Consequently $\mathcal{I}$ has an alternating $\{i-1,i+1\}$-square. Hence, by Lemma~\ref{FG}(f) $\mathcal{F}$ has two connected components, a contradiction. If $x=i\pm 1$ then  $\mathcal{G}$ also has a pair of adjacent edges with labels $i-1$ and $i+1$, giving rise to the same contradiction as before.
 
  \end{proof}

  \begin{prop}
   $G_0$ is transitive if and only if $G_{r-1}$ is transitive.
  \end{prop}
  \begin{proof}
 Suppose that $G_0$ is transitive and that $G_{r-1}$ is intransitive.
  If $i=0$, by Proposition~\ref{aallint}, $G_j$ is intransitive for every $j\neq 0$. Then by Proposition~\ref{ai}, $\mathcal{G}$ is the graph (3) or (4) of Table~\ref{m=2table}, but in both cases $G_0$ is intransitive, a contradiction.
  Thus $i\neq 0$.
  In this case $\mathcal{F}$  is a forest with exactly two connected components.
  Moreover $\mathcal{I}$ has a $0$-edge $\{O_s,O_t\}$ connecting the two connected components of $\mathcal{F}$. 
  Additionally, by Lemma~\ref{FG} (c), all the edges of $\mathcal{I}$ that do not belong to $\mathcal{F}$ must be edges incident to $O_s$ or to $O_t$.
  
  Suppose first that any pair of adjacent edges of $\mathcal{F}$ are consecutive.  
   Then, up to duality,  $\mathcal{F}$ is as one of the following graphs.
   \begin{small}
  $$ \xymatrix@-1.5pc{*+[o][F]{}&&*+[o][F]{}\ar@{-}[rr]^1&&*+[o][F]{}\ar@{-}[rr]^2&&*+[o][F]{}\ar@{.}[rr] &&*+[o][F]{} \ar@{-}[rr]^{r-2}&& *+[o][F]{}  }\qquad\qquad
  \xymatrix@-1.5pc{*+[o][F]{}&&*+[o][F]{}\ar@{-}[rr]^2&&*+[o][F]{}\ar@{-}[rr]^3&&*+[o][F]{}\ar@{.}[rr] &&*+[o][F]{} \ar@{-}[rr]^{r-1}&& *+[o][F]{}  }$$
  \end{small}
  In the case on the left $i=r-1$ and, as $|R|>1$, $\rho_{r-2}$ must be a transposition. But then $G_0\cong \Sym_{n/2}\wr \Cyc_2$, hence $\rho_0\in G_0$, a contradiction.
  Thus $\mathcal{F}$ must be the graph on the right, particularly $i=1$.
   Now  $\mathcal{I}$ is one of the following graphs where $l\in\{2,3\}$.
  \begin{small}
    \[\begin{array}{cc}
 
   \mbox{(a)}\;  \xymatrix@-1.5pc{&*+[o][F]{}\ar@{=}[rr]_0^l&&*+[o][F]{}\ar@{-}[rr]^2&&*+[o][F]{}\ar@{-}[rr]^3&&*+[o][F]{}\ar@{.}[rr] &&*+[o][F]{} \ar@{-}[rr]^{r-1}&& *+[o][F]{}  }&\qquad  \mbox{(c)}\; \xymatrix@-1.5pc{&*+[o][F]{}\ar@{-}[rrd]^2\ar@{=}[dd]_0^l\\
  &&& *+[o][F]{}\ar@{-}[rr]^3&&*+[o][F]{}\ar@{-}[rr]^4&&*+[o][F]{}\ar@{.}[rr] &&*+[o][F]{} \ar@{-}[rr]^{r-1}&& *+[o][F]{}\\
  &*+[o][F]{}\ar@{-}[rru]_2}\\
   \mbox{(b)}\; \xymatrix@-1.5pc{&*+[o][F]{}\ar@{-}[rrd]^2\ar@{-}[dd]_0\\
  &&& *+[o][F]{}\ar@{-}[rr]^3&&*+[o][F]{}\ar@{-}[rr]^4&&*+[o][F]{}\ar@{.}[rr] &&*+[o][F]{} \ar@{-}[rr]^{r-1}&& *+[o][F]{}\\
  &*+[o][F]{}\ar@{-}[rru]_2}&\\
  \end{array}\]
\end{small}
  Consider first the graph (a).
  Suppose that, for $x\neq 1$, $B_1\rho_x=B_2$. In any case this forces the existence of a $\{0,2\}$-square, giving a contradiction.
  Hence $\rho_1$ is the unique permutation swapping the blocks $B_1$ and $B_2$.
  This gives the following three possible graphs.
 \[\begin{array}{cc}
 (X) \; \xymatrix@-1.5pc{
  *+[o][F]{}\ar@{-}[dd]_1\ar@{=}[rr]^0_2&&*+[o][F]{}\ar@{-}[dd]_1&&*+[o][F]{}\ar@{-}[dd]_1\ar@{-}[rr]^3&&*+[o][F]{}\ar@{-}[dd]_1\ar@{.}[rr] &&*+[o][F]{} \ar@{-}[dd]^1\ar@{-}[rr]^{r-1}&& *+[o][F]{}   \ar@{-}[dd]^1 \\
  &&&&&&&&&&\\
  *+[o][F]{}&&*+[o][F]{}\ar@{-}[rr]_2&&*+[o][F]{}\ar@{-}[rr]_3&&*+[o][F]{}\ar@{.}[rr]&&*+[o][F]{} \ar@{-}[rr]_{r-1}&& *+[o][F]{} }
 &(Y)\;  \xymatrix@-1.5pc{
  *+[o][F]{}\ar@{-}[dd]_1\ar@{=}[rr]^0_3&&*+[o][F]{}\ar@{-}[dd]_1&&*+[o][F]{}\ar@{-}[dd]_1\ar@{-}[rr]^3&&*+[o][F]{}\ar@{-}[dd]_1\ar@{.}[rr] &&*+[o][F]{} \ar@{-}[dd]^1\ar@{-}[rr]^{r-1}&& *+[o][F]{}   \ar@{-}[dd]^1 \\
  &&&&&&&&&&\\
  *+[o][F]{}\ar@{-}[rr]_3&&*+[o][F]{}\ar@{-}[rr]_2&&*+[o][F]{}\ar@{-}[rr]_3&&*+[o][F]{}\ar@{.}[rr]&&*+[o][F]{} \ar@{-}[rr]_{r-1}&& *+[o][F]{} }
  \\
 \; (Z)  \xymatrix@-1.5pc{
  *+[o][F]{}\ar@{-}[dd]_1\ar@{-}[rr]\ar@{=}[rr]^{\{0,2,3\}}&&*+[o][F]{}\ar@{-}[dd]_1&&*+[o][F]{}\ar@{-}[dd]_1\ar@{-}[rr]^3&&*+[o][F]{}\ar@{-}[dd]_1\ar@{.}[rr] &&*+[o][F]{} \ar@{-}[dd]^1\ar@{-}[rr]^{r-1}&& *+[o][F]{}   \ar@{-}[dd]^1 \\
  &&&&&&&&&&\\
  *+[o][F]{}\ar@{-}[rr]_3&&*+[o][F]{}\ar@{-}[rr]_2&&*+[o][F]{}\ar@{-}[rr]_3&&*+[o][F]{}\ar@{.}[rr]&&*+[o][F]{} \ar@{-}[rr]_{r-1}&& *+[o][F]{} }
  &
 \end{array}\]
Consider first graph $(X)$. We can easily check computationally that $G_{0,4,\ldots,r-1} \cap G_{3,4,\ldots,r-1} \cong \Sym_3\times \Sym_3$ while $G_{0,3,4,\ldots,r-1}\cong D_{12}$, contradicting the intersection property. Thus $(X)$ is not a permutation representation graph of  a string C-group.
The graph $(Y)$ gives the relation $\rho_2^{\rho_3\rho_4\rho_3\rho_4\rho_3\rho_2\rho_1} = \rho_0$, thus the corresponding set of generators is not independent.
Lastly, in graph $(Z)$, let $\rho_1=(1,2)(3,4)\ldots(n-1,n)$, $\rho_0=(1,3)$ and $\rho_2=(1,3)(4,6)$. Both $(\rho_0\rho_1)^2 \in G_{3,4,\ldots,r-1}$ and $\rho_2^{\rho_3\rho_4\rho_3\rho_4\rho_3\rho_2}\in G_{0,5,\ldots,r-1}$ are equal to $(1,3)(2,4)$, however $(1,3)(2,4)\notin G_{0,3,4,\ldots,r-1}$, thus the intersection property fails.

 Now consider the graphs (b) and (c). In these cases $\mathcal{G}$ contains the following graph.
\begin{small}
$$ \xymatrix@-1pc{
  *+[o][F]{}\ar@{-}[dd]_1\ar@{-}[rrdd]^0&&*+[o][F]{}\ar@{-}[rr]^2\ar@{-}[dd]_1&&*+[o][F]{}\ar@{-}[dd]_1\ar@{-}[rr]^3&&*+[o][F]{}\ar@{.}[rr]&& \\
  && && && &&\\
  *+[o][F]{}\ar@/_1pc/@{-}[rrrr]_2&&*+[o][F]{}&&*+[o][F]{} \ar@{-}[rr]_3&& *+[o][F]{} *+[o][F]{}\ar@{.}[rr]&&}$$
\end{small} 
 But then $B_1\rho_0=B_2$, and consequently $\rho_0\in C$ and $\mathcal{G}$ contains the following graph.
 \begin{small}
$$ \xymatrix@-1pc{
  *+[o][F]{}\ar@{-}[dd]_1\ar@/_.5pc/@{-}[rrdd]^0&&*+[o][F]{}\ar@/_.5pc/@{-}[lldd]_0\ar@{-}[rr]^2\ar@{-}[dd]_1&&*+[o][F]{}\ar@{=}[dd]_{\{0,1\}}\ar@{-}[rr]^3&&*+[o][F]{}\ar@{=}[dd]_{\{0,1\}}\ar@{.}[rr]&& \\
  && && && &&\\
  *+[o][F]{}\ar@/_1pc/@{-}[rrrr]_2&&*+[o][F]{}&&*+[o][F]{} \ar@{-}[rr]_3&& *+[o][F]{} *+[o][F]{}\ar@{.}[rr]&&}$$
\end{small} 

 This implies that $(\rho_2\rho_3)^3=\rho_0$, a contradiction.
   
  Now suppose that there is a pair of incident edges of  $\mathcal{F}$ which have nonconsecutive labels.
  Then their labels must be $i-1$ and $i+1$. Moreover $R=\{\rho_{i-1},\rho_{i+1}\}$. In this case the elements of $R$ are transpositions, hence $G_0\cong \Sym_{n/2}\wr \Cyc_2$, a contradiction.
  
 We have shown that if \( G_0 \) is transitive, then \( G_{r-1} \) must be intransitive. By duality, the converse implication also holds.
  \end{proof}

  \begin{prop}
   $G_0$ and $G_{r-1}$ cannot be both transitive.
  \end{prop}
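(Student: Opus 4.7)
The plan is to argue by contradiction: suppose that $G_0$ and $G_{r-1}$ are both transitive and derive a contradiction through the fracture-graph machinery developed in the section. Under the standing hypotheses, $G_j$ is intransitive for every $j\in\{1,\dots,r-2\}\setminus\{i\}$, so $\mathcal F$ carries exactly $r-3=n/2-3$ edges; by Lemma~\ref{FG}(a) it is a forest on the $n/2$ orbits with precisely three connected components $\mathcal K_1,\mathcal K_2,\mathcal K_3$. The transitivity of $G_0$ is equivalent to the graph obtained from $\mathcal I$ by removing its $0$-edges being connected, and similarly for $G_{r-1}$ and the $(r-1)$-edges. Since by Lemma~\ref{FG}(e) every non-$\mathcal F$ edge of $\mathcal I$ labelled in $\{1,\dots,r-2\}\setminus\{i\}$ already joins two distinct components, this translates into the requirement that the $(r-1)$-edges (respectively the $0$-edges), possibly augmented by such extra edges, bridge $\mathcal K_1,\mathcal K_2,\mathcal K_3$.

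I would then split into the interior case $2\leq i\leq r-3$ and the boundary case $i\in\{1,r-2\}$. In the interior case, both $\rho_0$ and $\rho_{r-1}$ commute with $\rho_i$ and with each other, so $\rho_0,\rho_{r-1}\in C$ and $R\subseteq\{\rho_{i-1},\rho_{i+1}\}$; commutation of $\rho_0$ or $\rho_{r-1}$ with $\rho_i$ forces every one of their $\mathcal I$-edges between two distinct orbits to come as a matching pair of parallel edges, and commutation of $\rho_0$ with $\rho_{r-1}$ makes adjacent $0$- and $(r-1)$-edges extend to alternating $\{0,r-1\}$-squares in $\mathcal I$. Combining the count of bridges each generator must supply with the parallel-pair structure, Lemma~\ref{FG}(b), (c) and (f) will rule out each resulting configuration: either one obtains two differently-labelled edges between the same pair of orbits lying in a single $\mathcal F$-component (violating~(b)), or an alternating $\{0,r-1\}$-square having two parallel sides inside a single component (violating~(f)), or two parallel $0$- or $(r-1)$-edges whose four endpoints land in the same component (violating~(c)).

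In the boundary case $i\in\{1,r-2\}$, say $i=1$, the generator $\rho_0$ need not commute with $\rho_1$ and may lie in $R$. I would then restrict attention to the sggi $\Gamma_{r-1}=(G_{r-1},\{\rho_0,\dots,\rho_{r-2}\})$; it still has block system $\{B_1,B_2\}$ and $L=\{\rho_1\}$, and its stabilisers $(G_{r-1})_j = G_{j,r-1}$ are intransitive for $j\in\{1,\dots,r-2\}\setminus\{i\}$, while $(G_{r-1})_0 = G_{0,r-1}$ ends up (using transitivity of $G_{r-1}$ and the intersection property) in the pattern required by the preceding proposition. Applying that proposition (the case \emph{$G_0$ transitive, $G_{r-1}$ intransitive}) to $\Gamma_{r-1}$ identifies its permutation representation graph, up to duality, as one of the graphs $(5),(6),(7)$ of Table~\ref{m=2table}; reintroducing $\rho_{r-1}$ subject to the commuting property, the independence of $S$ and the additional requirement that $G_{r-1}$ itself be transitive then forces $\rho_{r-1}$ either to coincide with an existing generator or to create a forbidden $\{j,r-1\}$-square in $\mathcal G$, a contradiction in each of the three graphs.

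The principal obstacle I anticipate is the interior case. Coordinating the bridging contributions of two commuting generators, each producing matching-pair edges between orbits and interacting through induced alternating squares, across three components of $\mathcal F$ requires a careful enumeration of how many bridges each generator contributes between each pair of components; producing the final Lemma~\ref{FG} contradiction cleanly rather than through a sprawling case split is the main technical challenge.
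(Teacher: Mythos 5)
Your proposal has genuine gaps in each of its parts. First, by assuming at the outset that $\mathcal F$ has $r-3=n/2-3$ edges and exactly three components, you implicitly assume that the element of $L$ can be chosen with $i\notin\{0,r-1\}$. The case in which $\rho_0$ and $\rho_{r-1}$ are the \emph{only} generators swapping $B_1$ and $B_2$ (so that $L=\{\rho_0\}$ or $\{\rho_{r-1}\}$ is forced) is not vacuous and is not covered by your split into $2\leq i\leq r-3$ and $i\in\{1,r-2\}$; the paper opens its proof precisely with this situation, where $\mathcal F$ has only two components and a separate argument (a double $\{r-1,l\}$-edge forced between the two components, then a square whose vertices would need at least three components) is required. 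Second, your interior case is a plan rather than a proof, and the announced mechanism is insufficient: once Lemma~\ref{FG} rules out a double $\{0,r-1\}$-edge and adjacent $0$- and $(r-1)$-edges, the surviving configuration — disjoint $0$- and $(r-1)$-edges, two isolated vertices of $\mathcal F$, and the only nonconsecutive adjacency in $\mathcal F$ being an $(i-1)$-edge next to an $(i+1)$-edge, so $R=\{\rho_{i-1},\rho_{i+1}\}$ — is perfectly compatible with Lemma~\ref{FG}(b),(c),(f). The paper eliminates it by a group-theoretic step absent from your outline: the elements of $R$ (respectively $\rho_0$) are then transpositions, forcing $G_0\cong\Sym_{n/2}\wr\Cyc_2$ (respectively $G_{r-1}\cong\Sym_{n/2}\wr\Cyc_2$), which contradicts independence. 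You yourself flag the interior case as unresolved, and no amount of bookkeeping with the three fracture-graph lemmas you cite will close it without such an argument.

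Third, the boundary case as you set it up does not go through: the preceding proposition ($G_0$ transitive, $G_{r-1}$ intransitive, graphs (5)--(7) of Table~\ref{m=2table}) is proved under the standing reduction $r=n/2$ coming from Proposition~\ref{maximprim}, together with the section's hypotheses on $L$, $C$, $R$ for a rank-$n/2$ generating set; the sggi $\Gamma_{r-1}$ has rank $n/2-1$ on the same $n$ points, so that proposition simply does not apply to it (its conclusion describes rank-$r$ permutation representation graphs and does not even make sense for $\Gamma_{r-1}$), and the claim that its vertex stabilisers fall into the required pattern is unargued, as is the final ``reintroducing $\rho_{r-1}$ forces a coincidence or a forbidden square'' step. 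The paper instead treats $i\in\{1,r-2\}$ inside the same fracture-graph analysis (the consecutive-label subcase of the disjoint-edge configuration forces $i\in\{1,r-2\}$, two isolated vertices, $\rho_0$ a transposition) and again concludes with the wreath-product contradiction, not with a rank-reduction argument.
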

  \begin{proof}
  Suppose first that $\rho_0$ and $\rho_{r-1}$ are the only permutations swapping $B_1$ and $B_2$ and let $L=\{\rho_0\}$ (meaning that $i=0$). Then $\mathcal{F}$ has $n/2-2$ edges and two connected components, which are joined in $\mathcal{I}$ by a double $\{r-1,l\}$-edge for some $l\neq r-1$. But then, this double edge must belong to a square whose vertices must belong to at least three different connected components of $\mathcal{F}$, a contradiction. 
   Hence, there exists $j\notin\{0,r-1\}$ such that $B_1\rho_j=B_2$ thus we may assume that the element of $L$ is neither $\rho_0$ nor $\rho_{r-1}$.
   In this case $\mathcal{F}$ has exactly three components.
  
  Suppose that $\mathcal{I}$ has a double  $\{0,r-1\}$-edge. Then this double edge must belong to a square having two vertices in the same connected component of $\mathcal{F}$.
  Hence one edge of this square belongs to  $\mathcal{F}$, by Lemma~\ref{FG} (e). 
  As $n>8$ and $r>6$, there is another square that is adjacent to the previous one and these two adjacent squares form a graph with $6$ vertices and at most two edges in $\mathcal{F}$.
  Thus  $\mathcal{F}$ has at most four components, a contradiction.

 Now suppose that $\mathcal{I}$ has a $0$-edge that is adjacent to a $(r-1)$-edge. Then  $\mathcal{I}$ has an alternating square whose vertices belong to different  components of  $\mathcal{F}$, a contradiction.
  Thus $\mathcal{I}$ has a $0$-edge and a  $(r-1)$-edge that are not adjacent.
  This determine the three components of $\mathcal{F}$.
  An edge of $\mathcal{F}$ which is adjacent to the $0$-edge of $\mathcal{I}$ must have label $1$ and an edge of $\mathcal{F}$ which is adjacent to the $(r-1)$-edge of $\mathcal{I}$ must have label $r-2$.
  Hence two components of $\mathcal{F}$ are isolated vertices.
  Suppose first that adjacent edges of $\mathcal{F}$ have consecutive labels. Then either $i=1$ or $i=r-2$. Up to duality we may assume that $i=1$, then $\mathcal{F}$ is as follows.
    \begin{small}
   $$\xymatrix@-1.7pc{*+[o][F]{}&&*+[o][F]{}\ar@{-}[rr]^2&&*+[o][F]{} \ar@{.}[rr]&&*+[o][F]{}\ar@{-}[rr]^{r-2} &&*+[o][F]{}&& *+[o][F]{} }$$
   \end{small}
   Now any edge in $\mathcal{I}$ that is not in $\mathcal{F}$ must be incident to one of the isolated vertices of $\mathcal{F}$.
   Hence $\rho_0$ is a transposition, thus $G_{r-1}\cong \Sym_{n/2}\wr \Cyc_2$, a contradiction.

 Now consider the case where $\mathcal{F}$ has adjacent edges with nonconsecutive labels. This is only possible when a $(i-1)$-edge is adjacent to a $(i+1)$-edge, and then $R=\{\rho_{i-1},\rho_{i+1}\}$. But in this case the elements of $R$ are transpositions, hence $G_0\cong \Sym_{n/2}\wr\Cyc_2$, a contradiction.
 \end{proof}

   
  \section{Proof of Theorem~\ref{main} and list of all possibilities for $\mathcal{G}$} \label{tables+proof}
  
Let $G$ be the automorphism group of an abstract regular polytope $r\geq n/2$ whose automorphism  group has degree $n \geq 14$. 
By Proposition~\ref{prim} $G$ must be imprimitive and therefore embedded into $\Sym_k\wr \Sym_m$ with $n=mk$. Moreover Corollary~\ref{k=2orm=2} shows that either $k=2$ or $m=2$.
In Section~\ref{k=2} all the possibilities for $\mathcal{G}$ when $G$ admits a block system with $n/2$ blocks of size $2$ were determined, while Section~\ref{m=2} covers all the possibilities when $G$ has two blocks of size $n/2$. 
This leads to the possibilities given in Tables~(\ref{RC2})--(\ref{m=2table}).
All the graphs presented in these tables correspond to permutation representations of SGGIs. 
Based on computational results, we conjecture that these sets of generators satisfy the intersection property. 
In fact, some cases can be proven by induction. 
However, we have chosen to omit this analysis from the current paper and leave it as future work. 
One reason for this decision is our belief that a more elegant and general method may exist—one that avoids an exhaustive case-by-case analysis.
Having established this result, it is natural to ask how many polytopes, up to isomorphism and duality, exist in this classification, and to describe their Schläfli types.

\begin{tiny}
  \begin{table}[htbp!]
  \[\begin{array}{|cc|}
  \hline
&\mbox{ Case: }  |R\cup C|=2;\;  \langle L\rangle \cong \Sym_{n/2}; \; n/2\mbox{ odd };\; \langle L\rangle \mbox{ intransitive }.\\
      \hline
   (1) &\xymatrix@-1pc{
          *+[o][F]{}\ar@{-}[rr]^2 \ar@{-}[dd]_0&&*+[o][F]{}\ar@{=}[dd]_0^1\ar@{-}[rr]^3&&*+[o][F]{}\ar@{=}[dd]_0^1\ar@{.}[rr] &&*+[o][F]{} \ar@{=}[dd]_0^1\ar@{-}[rr]^{r-1}&& *+[o][F]{}   \ar@{=}[dd]_0^1 \\
          &&&&&&&&\\
          *+[o][F]{}\ar@{-}[rr]_2&&*+[o][F]{}\ar@{-}[rr]_3&&*+[o][F]{}\ar@{.}[rr]&&*+[o][F]{} \ar@{-}[rr]_{r-1}&& *+[o][F]{} }
     \quad(2)\quad \xymatrix@-1pc{
          *+[o][F]{}\ar@{-}[rr]^1 \ar@{=}[dd]_0^{r-1}&&*+[o][F]{}\ar@{=}[dd]_0^{r-1}\ar@{-}[rr]^2&&*+[o][F]{}\ar@{=}[dd]_0^{r-1}\ar@{.}[rr] &&*+[o][F]{} \ar@{=}[dd]_0^{r-1}\ar@{-}[rr]^{r-2}&& *+[o][F]{}   \ar@{-}[dd]_0 \\
          &&&&&&&&\\
          *+[o][F]{}\ar@{-}[rr]_1&&*+[o][F]{}\ar@{-}[rr]_2&&*+[o][F]{}\ar@{.}[rr]&&*+[o][F]{} \ar@{-}[rr]_{r-2}&& *+[o][F]{} }
          \\
      (3) & \xymatrix@-1pc{
        *+[o][F]{}\ar@{-}[rr]^3 \ar@{=}[dd]_0^1 && *+[o][F]{}\ar@{-}[rr]^2 \ar@{=}[dd]_0^1&&*+[o][F]{}\ar@{-}[dd]_0\ar@{-}[rr]^3 &&*+[o][F]{}\ar@{-}[dd]_0\ar@{-}[rr]^4 &&*+[o][F]{}\ar@{-}[dd]_0\ar@{.}[rr] &&*+[o][F]{} \ar@{-}[dd]_0\ar@{-}[rr]^{r-1}&& *+[o][F]{}   \ar@{-}[dd]_0 \\
        &&&&&&&&\\
        *+[o][F]{}\ar@{-}[rr]_3&&*+[o][F]{}\ar@{-}[rr]_2&& *+[o][F]{}\ar@{-}[rr]_3 && *+[o][F]{}\ar@{-}[rr]_4 &&*+[o][F]{}\ar@{.}[rr]&&*+[o][F]{} \ar@{-}[rr]_{r-1}&& *+[o][F]{} }
        \\
      (4) & \xymatrix@-1pc{
          *+[o][F]{}\ar@{-}[rr]^2 \ar@{=}[dd]_0^{r-1} && *+[o][F]{}\ar@{-}[rr]^1 \ar@{=}[dd]_0^{r-1} &&*+[o][F]{}\ar@{=}[dd]_0^{r-1} \ar@{-}[rr]^2 &&*+[o][F]{}\ar@{=}[dd]_0^{r-1}\ar@{-}[rr]^3 &&*+[o][F]{}\ar@{=}[dd]_0^{r-1}\ar@{.}[rr] &&*+[o][F]{} \ar@{=}[dd]_0^{r-1}\ar@{-}[rr]^{r-3} &&*+[o][F]{} \ar@{=}[dd]_0^{r-1} \ar@{-}[rr]^{r-2} && *+[o][F]{}   \ar@{-}[dd]_0 \\
          &&&&&&&&\\
          *+[o][F]{}\ar@{-}[rr]_2&&*+[o][F]{}\ar@{-}[rr]_1&& *+[o][F]{}\ar@{-}[rr]_2 && *+[o][F]{}\ar@{-}[rr]_3 &&*+[o][F]{}\ar@{.}[rr]&&*+[o][F]{} \ar@{-}[rr]_{r-3} &&*+[o][F]{} \ar@{-}[rr]_{r-2} && *+[o][F]{} }
          \\
      (5) & \xymatrix@-1pc{
        *+[o][F]{}\ar@{-}[rr]^2 \ar@{-}[dd]_0&&*+[o][F]{}\ar@{=}[dd]_0^{1}\ar@{-}[rr]^3&&*+[o][F]{}\ar@{=}[dd]_0^{1}\ar@{.}[rr] &&*+[o][F]{} \ar@{=}[dd]_0^{1}\ar@{-}[rr]^{r-3} &&*+[o][F]{} \ar@{=}[dd]_0^{1}\ar@{-}[rr]^{r-2} &&*+[o][F]{} \ar@{=}[dd]_0^{1}\ar@{-}[rr]^{r-1} &&*+[o][F]{} \ar@{=}[dd]_0^{1}\ar@{-}[rr]^{r-2} && *+[o][F]{}   \ar@{=}[dd]_0^{1} \\
        &&&&&&&&\\
        *+[o][F]{}\ar@{-}[rr]_2&&*+[o][F]{}\ar@{-}[rr]_3&&*+[o][F]{}\ar@{.}[rr]&&*+[o][F]{} \ar@{-}[rr]_{r-3} &&*+[o][F]{} \ar@{-}[rr]_{r-2} &&*+[o][F]{} \ar@{-}[rr]_{r-1} &&*+[o][F]{} \ar@{-}[rr]_{r-2} && *+[o][F]{} }
        \\
      (6) & \xymatrix@-1pc{
        *+[o][F]{}\ar@{-}[rr]^1 \ar@{-}[dd]_0 &&*+[o][F]{}\ar@{-}[dd]_0 \ar@{-}[rr]^2&&*+[o][F]{}\ar@{-}[dd]_0 \ar@{.}[rr] &&*+[o][F]{} \ar@{-}[dd]_0 \ar@{-}[rr]^{r-4} &&*+[o][F]{} \ar@{-}[dd]_0 \ar@{-}[rr]^{r-3} &&*+[o][F]{} \ar@{-}[dd]_0\ar@{-}[rr]^{r-2} &&*+[o][F]{} \ar@{=}[dd]_0^{r-1}\ar@{-}[rr]^{r-3} && *+[o][F]{}   \ar@{=}[dd]_0^{r-1} \\
        &&&&&&&&\\
        *+[o][F]{}\ar@{-}[rr]_1 &&*+[o][F]{}\ar@{-}[rr]_2&&*+[o][F]{}\ar@{.}[rr]&&*+[o][F]{} \ar@{-}[rr]_{r-4} &&*+[o][F]{} \ar@{-}[rr]_{r-3} &&*+[o][F]{} \ar@{-}[rr]_{r-2} &&*+[o][F]{} \ar@{-}[rr]_{r-3} && *+[o][F]{} }
        \\
      \hline
      &\mbox{ Case: }  |R\cup C|=2;\;  \langle L\rangle \cong \Sym_{n/2}; \; n/2\mbox{ odd };\;   \langle L\rangle \mbox{ transitive }.\\
      \hline 
         (7) & \xymatrix@-1pc{
            *+[o][F]{}\ar@{-}[rr]^2 \ar@{-}[dd]_{I_{1,2}}&&*+[o][F]{}\ar@{-}[dd]^{I_{2,3}}\ar@{-}[rr]^3&&*+[o][F]{}\ar@{-}[dd]^{I_{3,4}}\ar@{.}[rr] &&*+[o][F]{} \ar@{-}[dd]^{I_{r-2,r-1}}\ar@{-}[rr]^{r-1}&& *+[o][F]{}   \ar@{-}[dd]^{I_{r-1}} \\
            &&&&&&&&\\
            *+[o][F]{}\ar@{-}[rr]_2&&*+[o][F]{}\ar@{-}[rr]_3&&*+[o][F]{}\ar@{.}[rr]&&*+[o][F]{} \ar@{-}[rr]_{r-1}&& *+[o][F]{} }
           
        \qquad (8)\qquad \xymatrix@-1pc{
            *+[o][F]{}\ar@{-}[rr]^1 \ar@{-}[dd]^{I_{1}} &&*+[o][F]{}\ar@{-}[dd]^{I_{1,2}}\ar@{-}[rr]^2&&*+[o][F]{}\ar@{-}[dd]^{I_{2,3}}\ar@{.}[rr] &&*+[o][F]{} \ar@{-}[dd]^{I_{r-3,r-2}} \ar@{-}[rr]^{r-2}&& *+[o][F]{}  \ar@{-}[dd]^{I_{r-2,r-1}} \\
            &&&&&&&&\\
            *+[o][F]{}\ar@{-}[rr]_1&&*+[o][F]{}\ar@{-}[rr]_2&&*+[o][F]{}\ar@{.}[rr]&&*+[o][F]{} \ar@{-}[rr]_{r-2}&& *+[o][F]{} }
            \\
        (9) & \xymatrix@-1pc{
          *+[o][F]{}\ar@{-}[rr]^3 \ar@{-}[dd]^{I_{3}} && *+[o][F]{}\ar@{-}[rr]^2 \ar@{-}[dd]^{I_{2,3}}&&*+[o][F]{}\ar@{-}[dd]^{I_{1,2,3}}\ar@{-}[rr]^3 &&*+[o][F]{}\ar@{-}[dd]^{I_{1,3,4}}\ar@{-}[rr]^4 &&*+[o][F]{}\ar@{-}[dd]^{I_{1,3,4,5}}\ar@{.}[rr] &&*+[o][F]{} \ar@{-}[dd]^{I_{1,3,r-2,r-1}}\ar@{-}[rrr]^{r-1}&&& *+[o][F]{}  \ar@{-}[dd]^{I_{1,3,r-1}} \\
          &&&&&&&&\\
          *+[o][F]{}\ar@{-}[rr]_3&&*+[o][F]{}\ar@{-}[rr]_2&& *+[o][F]{}\ar@{-}[rr]_3 && *+[o][F]{}\ar@{-}[rr]_4 &&*+[o][F]{}\ar@{.}[rr]&&*+[o][F]{} \ar@{-}[rrr]_{r-1}&&& *+[o][F]{} }
          \\
        (10) & \xymatrix@-1pc{
            *+[o][F]{}\ar@{-}[rr]^2 \ar@{-}[dd]^{I_{2}} && *+[o][F]{}\ar@{-}[rr]^1 \ar@{-}[dd]^{I_{1,2}} &&*+[o][F]{}\ar@{-}[dd]^{I_{1,2}}\ar@{-}[rr]^2 &&*+[o][F]{}\ar@{-}[dd]^{I_{2,3}}\ar@{-}[rr]^3 &&*+[o][F]{}\ar@{-}[dd]^{I_{2,3,4}}\ar@{.}[rr] &&*+[o][F]{} \ar@{-}[dd]^{I_{2,r-4,r-3}}\ar@{-}[rrr]^{r-3} &&&*+[o][F]{} \ar@{-}[dd]^{I_{2,r-3,r-2}} \ar@{-}[rrr]^{r-2} &&& *+[o][F]{}   \ar@{-}[dd]^{I_{2,r-2,r-1}} \\
            &&&&&&&&\\
            *+[o][F]{}\ar@{-}[rr]_2&&*+[o][F]{}\ar@{-}[rr]_1&& *+[o][F]{}\ar@{-}[rr]_2 && *+[o][F]{}\ar@{-}[rr]_3 &&*+[o][F]{}\ar@{.}[rr]&&*+[o][F]{} \ar@{-}[rrr]_{r-3} &&&*+[o][F]{} \ar@{-}[rrr]_{r-2} &&& *+[o][F]{} }
            \\
        (11) & \xymatrix@-1pc{
          *+[o][F]{}\ar@{-}[rr]^2 \ar@{-}[dd]^{I_{1,2,r-2}} &&*+[o][F]{}\ar@{-}[dd]^{I_{2,3,r-2}}\ar@{-}[rr]^3&&*+[o][F]{}\ar@{-}[dd]^{I_{3,4,r-2}}\ar@{.}[rr] &&*+[o][F]{} \ar@{-}[dd]^{I_{r-4,r-3,r-2}}\ar@{-}[rrr]^{r-3} &&&*+[o][F]{} \ar@{-}[dd]^{I_{r-3,r-2}}\ar@{-}[rr]^{r-2} &&*+[o][F]{} \ar@{-}[dd]^{I_{r-2,r-1}}\ar@{-}[rr]^{r-1} &&*+[o][F]{} \ar@{-}[dd]^{I_{r-2,r-1}}\ar@{-}[rr]^{r-2} && *+[o][F]{}   \ar@{-}[dd]^{I_{r-2}} \\
          &&&&&&&&\\
          *+[o][F]{}\ar@{-}[rr]_2&&*+[o][F]{}\ar@{-}[rr]_3&&*+[o][F]{}\ar@{.}[rr]&&*+[o][F]{} \ar@{-}[rrr]_{r-3} &&&*+[o][F]{} \ar@{-}[rr]_{r-2} &&*+[o][F]{} \ar@{-}[rr]_{r-1} &&*+[o][F]{} \ar@{-}[rr]_{r-2} && *+[o][F]{} }
          \\
        (12) & \xymatrix@-1pc{
          *+[o][F]{}\ar@{-}[rrr]^1 \ar@{-}[dd]^{I_{1,r-3,r-1}} &&&*+[o][F]{}\ar@{-}[dd]^{I_{1,2,r-3,r-1}} \ar@{-}[rrr]^2&&&*+[o][F]{}\ar@{-}[dd]^{I_{2,3,r-3,r-1}} \ar@{.}[rrr] &&&*+[o][F]{} \ar@{-}[dd]^{I_{r-5,r-4,r-3,r-1}} \ar@{-}[rrrr]^{r-4} &&&&*+[o][F]{} \ar@{-}[dd]^{I_{r-4,r-3,r-1}} \ar@{-}[rrr]^{r-3} &&&*+[o][F]{} \ar@{-}[dd]^{I_{r-3,r-2,r-1}}\ar@{-}[rrr]^{r-2} &&&*+[o][F]{} \ar@{-}[dd]^{I_{r-3,r-2}} \ar@{-}[rr]^{r-3} && *+[o][F]{}   \ar@{-}[dd]^{I_{r-3}} \\
          &&&&&&&&\\
          *+[o][F]{}\ar@{-}[rrr]_1 &&&*+[o][F]{}\ar@{-}[rrr]_2&&&*+[o][F]{}\ar@{.}[rrr]&&&*+[o][F]{} \ar@{-}[rrrr]_{r-4} &&&&*+[o][F]{} \ar@{-}[rrr]_{r-3} &&&*+[o][F]{} \ar@{-}[rrr]_{r-2} &&&*+[o][F]{} \ar@{-}[rr]_{r-3} && *+[o][F]{} }
          \\
        \hline
    \end{array}\] 
    \caption{ $\mathbf{k = 2}$; Corollary~\ref{k=2RC2}.}
      \label{RC2}
    \end{table}
    \end{tiny}

  \begin{tiny}
    \begin{table}[htbp!]
      \[\begin{array}{|cc|}
      \hline
&\mbox{ Case: } |R\cup C|=1;\;  \langle L\rangle \cong \Sym_{n/2}; \;  \langle L\rangle \mbox{ intransitive }.\\
      \hline

      \hline
      (13) & $\xymatrix@-1pc{
        *+[o][F]{}\ar@{-}[dd]^0\ar@{-}[rr]^1&&*+[o][F]{}\ar@{-}[rr]^2&&*+[o][F]{}\ar@{.}[rr] &&*+[o][F]{} \ar@{-}[rr]^{r-1}&& *+[o][F]{}  \\
        &&&&&&&&\\
        *+[o][F]{}\ar@{-}[rr]_1&&*+[o][F]{}\ar@{-}[rr]_2&&*+[o][F]{}\ar@{.}[rr]&&*+[o][F]{} \ar@{-}[rr]_{r-1}&& *+[o][F]{} }$
      \qquad(14) \qquad 
      $\xymatrix@-1pc{
          *+[o][F]{}\ar@{-}[rr]^1&&*+[o][F]{}\ar@{-}[dd]_0\ar@{-}[rr]^2&&*+[o][F]{}\ar@{-}[dd]_0\ar@{.}[rr] &&*+[o][F]{} \ar@{-}[dd]^0\ar@{-}[rr]^{r-1}&& *+[o][F]{}   \ar@{-}[dd]^0 \\
          &&&&&&&&\\
          *+[o][F]{}\ar@{-}[rr]_1&&*+[o][F]{}\ar@{-}[rr]_2&&*+[o][F]{}\ar@{.}[rr]&&*+[o][F]{} \ar@{-}[rr]_{r-1}&& *+[o][F]{} }$
          \\
             \hline
&\mbox{ Case: } |R\cup C|=1;\;  \langle L\rangle \cong \Sym_{n/2}; \;  \langle L\rangle \mbox{ transitive }.\\
      \hline
      (15) & $\xymatrix@-1pc{*+[o][F]{}\ar@{-}[dd]_{I_1}\ar@{-}[rr]^1&&*+[o][F]{}\ar@{-}[dd]_{I_{0,1,2}}\ar@{-}[rr]^2&&*+[o][F]{}\ar@{-}[dd]_{I_{0,2,3}}\ar@{.}[rr]  &&*+[o][F]{} \ar@{-}[dd]^{I_{0,r-3,r-2}}\ar@{-}[rrr]^{r-2}&&&*+[o][F]{} \ar@{-}[dd]^{I_{0,r-2,r-1}}\ar@{-}[rrr]^{r-1}&&& *+[o][F]{}   \ar@{-}[dd]^{I_{0,r-1}} \\
      &&&&&&&&\\
      *+[o][F]{}\ar@{-}[rr]_1&&*+[o][F]{}\ar@{-}[rr]_2&&*+[o][F]{}\ar@{.}[rr]&&*+[o][F]{} \ar@{-}[rrr]_{r-2}&&&*+[o][F]{} \ar@{-}[rrr]_{r-1}&&& *+[o][F]{} }$
      \\
    (16)& $\xymatrix@-1pc{*+[o][F]{}\ar@{-}[dd]_{I_{0,1}}\ar@{-}[rr]^1&&*+[o][F]{}\ar@{-}[dd]_{I_{1,2}}\ar@{-}[rr]^2&&*+[o][F]{}\ar@{-}[dd]_{I_{2,3}}\ar@{.}[rr]  &&*+[o][F]{} \ar@{-}[dd]^{I_{r-3,r-2}}\ar@{-}[rr]^{r-2}&&*+[o][F]{} \ar@{-}[dd]^{I_{r-2,r-1}}\ar@{-}[rr]^{r-1}&& *+[o][F]{}   \ar@{-}[dd]^{I_{r-1}} \\
      &&&&&&&&\\
      *+[o][F]{}\ar@{-}[rr]_1&&*+[o][F]{}\ar@{-}[rr]_2&&*+[o][F]{}\ar@{.}[rr]&&*+[o][F]{} \ar@{-}[rr]_{r-2}&&*+[o][F]{} \ar@{-}[rr]_{r-1}&& *+[o][F]{} }$
      \\
      \hline
        \end{array}\] 
    \caption{ $\mathbf{k = 2}$; Proposition~\ref{S}.}
      \label{L=S}
    \end{table}
    \end{tiny}

  \begin{tiny}
    \begin{table}[htbp!]
      \[\begin{array}{|cc|}
        \hline
&\mbox{ Case: }  |R\cup C|=1\;  \langle L\rangle \not\cong \Sym_{n/2}; \;{\rm Ker}(f) \cong \Cyc_2.\\
      \hline
      & 2\leq i \leq r-2\\
      \hline 
      (17) & \xymatrix@-1pc{
      *+[o][F]{}\ar@{-}[rr]^{1} \ar@{-}[dd]_{I^{\leq i}_{0,1}} && *+[o][F]{}\ar@{-}[rr]^{2} \ar@{-}[dd]_{I^{\leq i}_{1,2}} && *+[o][F]{} \ar@{.}[rr] \ar@{-}[dd]_{I^{\leq i}_{2,3}} && *+[o][F]{} \ar@{-}[rr]^{i-1} \ar@{-}[dd]_{I^{\leq i}_{i-2,i-1}} && *+[o][F]{} \ar@{-}[rr]^{i}\ar@{-}[dd]_{I^{\leq i}_{i-1,i}} && *+[o][F]{} \ar@{-}[rr]^{i+1}\ar@{-}[dd]_{I^{\leq i}_{i}}&& *+[o][F]{} \ar@{.}[rr] \ar@{-}[dd]_{I^{\leq i}}  && *+[o][F]{} \ar@{-}[rr]^{r-1} \ar@{-}[dd]_{I^{\leq i}} && *+[o][F]{} \ar@{-}[dd]_{I^{\leq i}}\\
          \\
      *+[o][F]{}\ar@{-}[rr]_{1}                 && *+[o][F]{}\ar@{-}[rr]_{2} && *+[o][F]{} \ar@{.}[rr] &&  *+[o][F]{} \ar@{-}[rr]_{i-1} && *+[o][F]{} \ar@{-}[rr]_{i} && *+[o][F]{} \ar@{-}[rr]_{i+1} && *+[o][F]{} \ar@{.}[rr]   && *+[o][F]{} \ar@{-}[rr]_{r-1} && *+[o][F]{}
      }  \\

      (18) & \xymatrix@-1pc{
      *+[o][F]{}\ar@{-}[rr]^{1} \ar@{-}[dd]_{I^{\leq i}_{1}} && *+[o][F]{}\ar@{-}[rr]^{2} \ar@{-}[dd]_{I^{\leq i}_{0,1,2}} && *+[o][F]{} \ar@{.}[rrr] \ar@{-}[dd]_{I^{\leq i}_{0,2,3}} &&& *+[o][F]{} \ar@{-}[rr]^{i-1} \ar@{-}[dd]_{I^{\leq i}_{0,i-2,i-1}} && *+[o][F]{} \ar@{-}[rr]^{i}\ar@{-}[dd]_{I^{\leq i}_{0,i-1,i}} && *+[o][F]{} \ar@{-}[rr]^{i+1}\ar@{-}[dd]_{I^{\leq i}_{0,i}} && *+[o][F]{} \ar@{.}[rr] \ar@{-}[dd]_{I^{\leq i}_0}  && *+[o][F]{} \ar@{-}[rr]^{r-1} \ar@{-}[dd]_{I^{\leq i}_0} && *+[o][F]{} \ar@{-}[dd]_{I^{\leq i}_0}\\
          \\
      *+[o][F]{}\ar@{-}[rr]_{1}                 && *+[o][F]{}\ar@{-}[rr]_{2} && *+[o][F]{} \ar@{.}[rrr]   &&& *+[o][F]{} \ar@{-}[rr]_{i-1} && *+[o][F]{} \ar@{-}[rr]_{i} && *+[o][F]{} \ar@{-}[rr]_{i+1} && *+[o][F]{} \ar@{.}[rr]  && *+[o][F]{} \ar@{-}[rr]_{r-1} && *+[o][F]{}
      }  \\
      \hline
      & 1\leq i \leq r-3\\
      \hline 
      (19) &  \xymatrix@-1pc{
      *+[o][F]{}\ar@{-}[rr]^{1} \ar@{-}[dd]_{I^{\geq i+1}} && *+[o][F]{} \ar@{.}[rr] \ar@{=}[dd]_{I^{\geq i+1}}^{0}  && *+[o][F]{} \ar@{-}[rr]^{i}\ar@{=}[dd]_{I^{\geq i+1}}^{0} && *+[o][F]{} \ar@{-}[rrr]^{i+1}\ar@{=}[dd]_{I^{> i+1}}^0&&& *+[o][F]{} \ar@{.}[rrr] \ar@{=}[dd]_{I^{> i+1}_{i+2}}^0 &&& *+[o][F]{} \ar@{-}[rrr]^{r-2}\ar@{=}[dd]_{I^{\geq i+1}_{r-3,r-2}}^0 &&& *+[o][F]{} \ar@{-}[rr]^{r-1} \ar@{=}[dd]_{I^{\geq i+1}_{r-2,r-1}}^0 && *+[o][F]{} \ar@{=}[dd]_{I^{\geq i+1}_{r-1}}^0\\
          \\
      *+[o][F]{}\ar@{-}[rr]_{1}                && *+[o][F]{} \ar@{.}[rr] && *+[o][F]{} \ar@{-}[rr]_{i} && *+[o][F]{} \ar@{-}[rrr]_{i+1} &&& *+[o][F]{} \ar@{.}[rrr] &&& *+[o][F]{} \ar@{-}[rrr]_{r-2} &&& *+[o][F]{} \ar@{-}[rr]_{r-1} && *+[o][F]{}
      } \\

      (20) & \xymatrix@-1pc{
      *+[o][F]{}\ar@{-}[rr]^{1} \ar@{=}[dd]_{I^{\geq i+1}}^0 && *+[o][F]{}\ar@{-}[rr]^{2} \ar@{-}[dd]_{I^{\geq i+1}} && *+[o][F]{} \ar@{.}[rr] \ar@{-}[dd]_{I^{\geq i+1}} && *+[o][F]{} \ar@{-}[rr]^{i-1} \ar@{-}[dd]_{I^{\geq i+1}} && *+[o][F]{} \ar@{-}[rr]^{i}\ar@{-}[dd]_{I^{\geq i+1}} && *+[o][F]{} \ar@{-}[rr]^{i+1}\ar@{-}[dd]_{I^{> i+1}}&& *+[o][F]{} \ar@{.}[rr] \ar@{-}[dd]_{I^{> i+1}_{i+2}} && *+[o][F]{} \ar@{-}[rr]^{r-2}\ar@{-}[dd]_{I^{\geq i+1}_{r-3,r-2}} && *+[o][F]{} \ar@{-}[rr]^{r-1} \ar@{-}[dd]_{I^{\geq i+1}_{r-2,r-1}} && *+[o][F]{} \ar@{-}[dd]_{I^{\geq i+1}_{r-1}}\\
          \\
      *+[o][F]{}\ar@{-}[rr]_{1}                 && *+[o][F]{}\ar@{-}[rr]_{2} && *+[o][F]{} \ar@{.}[rr] &&  *+[o][F]{} \ar@{-}[rr]_{i-1} && *+[o][F]{} \ar@{-}[rr]_{i} && *+[o][F]{} \ar@{-}[rr]_{i+1} && *+[o][F]{} \ar@{.}[rr] && *+[o][F]{} \ar@{-}[rr]_{r-2} && *+[o][F]{} \ar@{-}[rr]_{r-1} && *+[o][F]{}
      } \\
      \hline
      & \mbox{Remaining cases}\\
      \hline 
      (21) &\xymatrix@-1pc{
        *+[o][F]{}\ar@{-}[rr]^1\ar@{-}[dd]^{0}&&*+[o][F]{}\ar@{-}[rr]^2&&*+[o][F]{}\ar@{-}[rr]^3\ar@{-}[dd]^{1}&&*+[o][F]{}\ar@{.}[rr]\ar@{-}[dd]^{1}&&*+[o][F]{} \ar@{-}[rr]^{r-1}\ar@{-}[dd]^{1}&& *+[o][F]{}\ar@{-}[dd]^{1}  \\
        &&&&&&&&\\
        *+[o][F]{}\ar@{-}[rr]_1&&*+[o][F]{}\ar@{-}[rr]_2&&*+[o][F]{}\ar@{-}[rr]_3&&*+[o][F]{}\ar@{.}[rr]&&*+[o][F]{} \ar@{-}[rr]_{r-1}&& *+[o][F]{} } \\
      (22)& \xymatrix@-1pc{
        *+[o][F]{}\ar@{-}[rr]^1&&*+[o][F]{}\ar@{-}[rr]^2\ar@{-}[dd]^0&&*+[o][F]{}\ar@{-}[rr]^3\ar@{=}[dd]^{0,1}&&*+[o][F]{}\ar@{.}[rr]\ar@{=}[dd]^{0,1}&&*+[o][F]{} \ar@{-}[rr]^{r-1}\ar@{=}[dd]^{0,1}&& *+[o][F]{}\ar@{=}[dd]^{0,1}  \\
        &&&&&&&&\\
        *+[o][F]{}\ar@{-}[rr]_1&&*+[o][F]{}\ar@{-}[rr]_2&&*+[o][F]{}\ar@{-}[rr]_3&&*+[o][F]{}\ar@{.}[rr]&&*+[o][F]{} \ar@{-}[rr]_{r-1}&& *+[o][F]{} } 
        \\
    (23) & \xymatrix@-1pc{
        *+[o][F]{}\ar@{-}[rr]^1\ar@{=}[dd]_{\{0,r-1\}}&&*+[o][F]{}\ar@{-}[dd]_{r-1}\ar@{-}[rr]^2&&*+[o][F]{}\ar@{-}[dd]_{r-1}\ar@{.}[rr] &&*+[o][F]{} \ar@{-}[dd]_{r-1}\ar@{-}[rr]^{r-2}  
        &&*+[o][F]{} \ar@{-}[rr]^{r-1}&& *+[o][F]{}   \\
        &&&&&&&&\\
        *+[o][F]{}\ar@{-}[rr]_1&&*+[o][F]{}\ar@{-}[rr]_2&&*+[o][F]{}\ar@{.}[rr]&&*+[o][F]{} \ar@{-}[rr]_{r-2}&& *+[o][F]{} \ar@{-}[rr]_{r-1}&& *+[o][F]{} } \\
(24) & \xymatrix@-1pc{
        *+[o][F]{}\ar@{-}[rr]^1\ar@{-}[dd]_{r-1}&&*+[o][F]{}\ar@{=}[dd]_{\{0,r-1\}}\ar@{-}[rr]^2&&*+[o][F]{}\ar@{=}[dd]_{\{0,r-1\}}\ar@{.}[rr] &&*+[o][F]{} \ar@{=}[dd]_{\{0,r-1\}}\ar@{-}[rr]^{r-2}  
        &&*+[o][F]{} \ar@{-}[dd]_0\ar@{-}[rr]^{r-1}&& *+[o][F]{}   \ar@{-}[dd]_0  \\
        &&&&&&&&\\
        *+[o][F]{}\ar@{-}[rr]_1&&*+[o][F]{}\ar@{-}[rr]_2&&*+[o][F]{}\ar@{.}[rr]&&*+[o][F]{} \ar@{-}[rr]_{r-2}&& *+[o][F]{} \ar@{-}[rr]_{r-1}&& *+[o][F]{} } 
        \\
        \hline
          \end{array}\] 
    \caption{  $\mathbf{k = 2}$; Proposition~\ref{K=C2}.}
      \label{kc2}
    \end{table}
    \end{tiny}

          \begin{tiny}
    \begin{table}[htbp!]
    \[\begin{array}{|cc|}
     \hline
&\mbox{ Case: }  |R\cup C|=1;\;  \langle L\rangle \not\cong \Sym_{n/2}; \;{\rm Ker}(f) \not\cong \Cyc_2.\\
      \hline
      
   & \textnormal{$x$ even and $n/2$ odd}\\ 
  \hline
(25) &$\xymatrix@-1pc{
      *+[o][F]{}\ar@{-}[rr]^1\ar@{=}[dd]_0^{x+1}&&*+[o][F]{}\ar@{=}[dd]_0^{x+1}\ar@{-}[rr]^2&&*+[o][F]{}\ar@{=}[dd]_0^{x+1}\ar@{.}[rr] &&*+[o][F]{}\ar@{-}[rr]^{x}\ar@{=}[dd]_0^{x+1} &&*+[o][F]{}\ar@{-}[rr]^{x+1}\ar@{-}[dd]_0 &&*+[o][F]{}\ar@{-}[dd]_0\ar@{-}[rr]^{x+2}&&*+[o][F]{}\ar@{-}[dd]_0 \ar@{.}[rr] && *+[o][F]{} \ar@{-}[dd]_0\ar@{-}[rr]^{r-2}&& *+[o][F]{}   \ar@{-}[dd]_0 \ar@{-}[rr]^{r-1} && *+[o][F]{}   \ar@{-}[dd]_0 \\
      &&&&&&&&\\
      *+[o][F]{}\ar@{-}[rr]_1&&*+[o][F]{}\ar@{-}[rr]_2&&*+[o][F]{}\ar@{.}[rr]&&*+[o][F]{}\ar@{-}[rr]_{x}&&*+[o][F]{}\ar@{-}[rr]_{x+1}&&*+[o][F]{}\ar@{-}[rr]_{x+2} &&*+[o][F]{} \ar@{.}[rr] &&*+[o][F]{}\ar@{-}[rr]_{r-2}&& *+[o][F]{} \ar@{-}[rr]_{r-1} && *+[o][F]{} }$
      \\
           (26)  & $\xymatrix@-1pc{
      *+[o][F]{}\ar@{-}[rr]^1\ar@{-}[dd]_{I_{1,x+1}}&&*+[o][F]{}\ar@{-}[dd]_{I_{1,2,x+1}}\ar@{-}[rr]^2&&*+[o][F]{}\ar@{-}[dd]_{I_{2,3,x+1}}\ar@{.}[rrr] &&&*+[o][F]{}\ar@{-}[rr]^{x}\ar@{-}[dd]_{I_{x-1,x,x+1}}&&*+[o][F]{}\ar@{-}[dd]_{I_{x,x+1}}\ar@{-}[rr]^{x+1}&&*+[o][F]{}\ar@{-}[dd]_{I_{x+1,x+2}}\ar@{-}[rr]^{x+2}&&*+[o][F]{}\ar@{-}[dd]_{I_{x+2,x+3}} \ar@{.}[rr] && *+[o][F]{} \ar@{-}[dd]_{I_{r-3,r-2}}\ar@{-}[rr]^{r-2}&& *+[o][F]{}   \ar@{-}[dd]_{I_{r-2,r-1}} \ar@{-}[rr]^{r-1} && *+[o][F]{}   \ar@{-}[dd]_{I_{r-1}} \\
      &&&&&&&&\\
      *+[o][F]{}\ar@{-}[rr]_1&&*+[o][F]{}\ar@{-}[rr]_2&&*+[o][F]{}\ar@{.}[rrr]&&&*+[o][F]{}\ar@{-}[rr]_{x}&&*+[o][F]{}\ar@{-}[rr]_{x+1}&&*+[o][F]{}\ar@{-}[rr]_{x+2} &&*+[o][F]{} \ar@{.}[rr] &&*+[o][F]{}\ar@{-}[rr]_{r-2}&& *+[o][F]{} \ar@{-}[rr]_{r-1} && *+[o][F]{} }$
      \\
      \hline
          & \textnormal{$x$ odd and $n/2$ odd}\\ 
       \hline    
           (27)   &  $\xymatrix@-1pc{
      *+[o][F]{}\ar@{-}[rr]^1\ar@{-}[dd]_0&&*+[o][F]{}\ar@{-}[dd]_0\ar@{-}[rr]^2&&*+[o][F]{}\ar@{-}[dd]_0\ar@{.}[rr] &&*+[o][F]{}\ar@{-}[rr]^{x}\ar@{-}[dd]_0 &&*+[o][F]{}\ar@{-}[rr]^{x+1}\ar@{-}[dd]_0 &&*+[o][F]{}\ar@{-}[dd]_0\ar@{-}[rr]^{x+2}&&*+[o][F]{}\ar@{=}[dd]_0^{x+1} \ar@{.}[rr] && *+[o][F]{} \ar@{=}[dd]^{x+1}_0\ar@{-}[rr]^{r-2}&& *+[o][F]{}   \ar@{=}[dd]_0^{x+1} \ar@{-}[rr]^{r-1} && *+[o][F]{}   \ar@{=}[dd]_0^{x+1} \\
      &&&&&&&&\\
      *+[o][F]{}\ar@{-}[rr]_1&&*+[o][F]{}\ar@{-}[rr]_2&&*+[o][F]{}\ar@{.}[rr]&&*+[o][F]{}\ar@{-}[rr]_{x}&&*+[o][F]{}\ar@{-}[rr]_{x+1}&&*+[o][F]{}\ar@{-}[rr]_{x+2} &&*+[o][F]{} \ar@{.}[rr] &&*+[o][F]{}\ar@{-}[rr]_{r-2}&& *+[o][F]{} \ar@{-}[rr]_{r-1} && *+[o][F]{} }$
      \\

       (28)    &  $\xymatrix@-1pc{
        *+[o][F]{}\ar@{-}[rr]^1\ar@{-}[dd]_{I_{1}}&&*+[o][F]{}\ar@{-}[dd]_{I_{1,2}}\ar@{-}[rr]^2&&*+[o][F]{}\ar@{-}[dd]_{I_{2,3}}\ar@{.}[rr] &&*+[o][F]{}\ar@{-}[rr]^{x}\ar@{-}[dd]_{I_{x-1,x}}&&*+[o][F]{}\ar@{-}[dd]_{I_{x,x+1}}\ar@{-}[rr]^{x+1}&&*+[o][F]{}\ar@{-}[dd]_{I_{x+1,x+2}}\ar@{-}[rrr]^{x+2}&&&*+[o][F]{}\ar@{-}[dd]_{I_{x+1,x+2,x+3}} \ar@{.}[rrr] &&& *+[o][F]{} \ar@{-}[dd]_{I_{x+1,r-3,r-2}}\ar@{-}[rrr]^{r-2}&&& *+[o][F]{}   \ar@{-}[dd]_{I_{x+1,r-2,r-1}} \ar@{-}[rr]^{r-1} && *+[o][F]{}   \ar@{-}[dd]_{I_{x+1,r-1}} \\
        &&&&&&&&\\
        *+[o][F]{}\ar@{-}[rr]_1&&*+[o][F]{}\ar@{-}[rr]_2&&*+[o][F]{}\ar@{.}[rr]&&*+[o][F]{}\ar@{-}[rr]_{x}&&*+[o][F]{}\ar@{-}[rr]_{x+1}&&*+[o][F]{}\ar@{-}[rrr]_{x+2} &&& *+[o][F]{} \ar@{.}[rrr] &&&*+[o][F]{}\ar@{-}[rrr]_{r-2}&&& *+[o][F]{} \ar@{-}[rr]_{r-1} && *+[o][F]{} }$
      \\
               \hline
          \end{array}\] 
  \caption{$\mathbf{k = 2}$;  Proposition~\ref{k=2notK=2}.}
    \label{KnotC2}
  \end{table}
\end{tiny}

\begin{tiny}
  \begin{table}[htbp!]
  \[\begin{array}{|cc|}
  \hline
  (1)\; \xymatrix@-1.5pc{
*+[o][F]{}\ar@{-}[dd]_0\ar@{-}[rr]^1&&*+[o][F]{}\ar@{-}[dd]_0\ar@{-}[rr]^2&&*+[o][F]{}\ar@{-}[dd]_0\ar@{.}[rr] &&*+[o][F]{} \ar@{-}[dd]^0\ar@{-}[rr]^{r-1}&& *+[o][F]{}   \ar@{-}[dd]^0 \\
&&&&&&&&\\
*+[o][F]{}\ar@{-}[rr]_1&&*+[o][F]{}\ar@{-}[rr]_2&&*+[o][F]{}\ar@{.}[rr]&&*+[o][F]{} \ar@{-}[rr]_{r-1}&& *+[o][F]{} }
&(2)\;\xymatrix@-1.5pc{
*+[o][F]{}\ar@{-}[dd]_{I_1}\ar@{-}[rr]^1&&*+[o][F]{}\ar@{-}[dd]_{I_{1,2}}\ar@{-}[rr]^2&&*+[o][F]{}\ar@{-}[dd]_{I_{2,3}}\ar@{.}[rr]  &&*+[o][F]{} \ar@{-}[dd]^{I_{r-3,r-2}}\ar@{-}[rrr]^{r-2}&&&*+[o][F]{} \ar@{-}[dd]^{I_{r-2,r-1}}\ar@{-}[rrr]^{r-1}&&& *+[o][F]{}   \ar@{-}[dd]^{I_{r-1}} \\
&&&&&&&&\\
*+[o][F]{}\ar@{-}[rr]_1&&*+[o][F]{}\ar@{-}[rr]_2&&*+[o][F]{}\ar@{.}[rr]&&*+[o][F]{} \ar@{-}[rrr]_{r-2}&&&*+[o][F]{} \ar@{-}[rrr]_{r-1}&&& *+[o][F]{} }\\
  (3)\; \xymatrix@-1.5pc{
  *+[o][F]{}\ar@{-}[dd]_0&&*+[o][F]{}\ar@{-}[dd]_0\ar@{-}[rr]^2&&*+[o][F]{}\ar@{-}[dd]_0\ar@{.}[rr] &&*+[o][F]{} \ar@{-}[dd]^0\ar@{-}[rr]^{r-1}&& *+[o][F]{}   \ar@{-}[dd]^0 \\
  &&&&&&&&\\
  *+[o][F]{}\ar@{-}[rr]_1&&*+[o][F]{}\ar@{-}[rr]_2&&*+[o][F]{}\ar@{.}[rr]&&*+[o][F]{} \ar@{-}[rr]_{r-1}&& *+[o][F]{} }
  &(4)\;  \xymatrix@-1.5pc{
  *+[o][F]{}\ar@{-}[dd]_i\ar@{-}[rr]^0&&*+[o][F]{}\ar@{-}[dd]_i\ar@{.}[rr]\ar@{-}[dd]_i&&*+[o][F]{}\ar@{-}[rr]^{i-2} \ar@{-}[dd]_i &&*+[o][F]{}\ar@{-}[dd]_i\ar@{-}[rr]^{i-1}&&*+[o][F]{}\ar@{-}[dd]_i&&*+[o][F]{}\ar@{-}[dd]_i\ar@{-}[rr]^{i+2}&&*+[o][F]{}\ar@{-}[dd]_i\ar@{.}[rr] &&*+[o][F]{} \ar@{-}[dd]^i\ar@{-}[rr]^{r-1}&& *+[o][F]{}   \ar@{-}[dd]^i\\
  &&&&&&&&&&&&&&&&\\
  *+[o][F]{}\ar@{-}[rr]_0&&*+[o][F]{}\ar@{.}[rr]&&*+[o][F]{}\ar@{-}[rr]_{i-2} &&*+[o][F]{}&&*+[o][F]{}\ar@{-}[rr]_{i+1} &&*+[o][F]{}\ar@{-}[rr]_{i+2}&&*+[o][F]{}\ar@{.}[rr]&&*+[o][F]{} \ar@{-}[rr]_{r-1}&& *+[o][F]{} }
  \\
  \hline
  \end{array}\] 
  \caption{$\mathbf{m = 2}$.}
  \label{m=2table}
  \end{table}
  \end{tiny}

\section{Acknowledgements}

The author Maria Elisa Fernandes was supported by  CIDMA under the
Portuguese Foundation for Science and Technology 
(FCT, https://ror.org/00snfqn58)  
Multi-Annual Financing Program for R$\&$D Units,
grants UID/4106/2025 and UID/PRR/4106/2025.
The author Claudio Alexandre Piedade was partially supported by CMUP, member of LASI, which is financed by national funds through FCT – Fundação para a Ciência e a Tecnologia, I.P., under the projects with reference UIDB/00144/2025 and UIDP/00144/2025.


\bibliographystyle{amsplain}

\end{document}